\newtheorem{theorem}{Theorem}[section]
\newtheorem{corollary}[theorem]{Corollary}
\newtheorem{lemma}[theorem]{Lemma}
\newtheorem{proposition}[theorem]{Proposition}
\theoremstyle{definition}
\newtheorem{definition}[theorem]{Definition}
\newtheorem{remark}[theorem]{\textbf{Remark}}
\newtheorem{example}[theorem]{\textbf{Example}}
\numberwithin{equation}{section}
\numberwithin{theorem}{section}
\newcommand{\E}{\mathbb{E}}
\renewcommand{\P}{\mathbb{P}}
\newcommand{\Pp}{\mathbb{Q}}
\newcommand{\Q}{\mathbb{Q}}
\newcommand{\di}{\mathrm{d}}
\newcommand{\pian}[2]{\dfrac{\partial #1}{\partial #2}}
\newcommand{\piann}[2]{\dfrac{\partial^{2}#1}{\partial #2^{2}}}
\newcommand{\R}{\mathbb{R}}
\newcommand{\Lc}{\mathcal{L}}
\newcommand{\as}{a.s.}
\newcommand{\eg}{e.g.}
\newcommand{\cf}{c.f.}
\newcommand{\dx}{\mathrm{d}x}
\newcommand{\dy}{\mathrm{d}y}
\newcommand{\ds}{\mathrm{d}s}
\newcommand{\du}{\mathrm{d}u}
\newcommand{\dt}{\mathrm{d}t}
\newcommand{\Fb}{\mathbb{F}}
\newcommand{\Ec}{\mathcal{E}}
\newcommand{\Fc}{\mathcal{F}}
\newcommand{\Hc}{\mathcal{H}}
\newcommand{\Hb}{\mathbb{H}}
\newcommand{\T}{\mathrm{T}}
\DeclareMathOperator{\supp}{supp}
\renewcommand{\r}{R}
\renewcommand{\Pr}{\P}
\newcommand{\Mb}{\mathbb{M}}
\newcommand{\Nb}{\mathbb{N}}
\newcommand{\Pc}{\mathcal{P}}
\newcommand{\Bc}{\mathcal{B}}
\newcommand{\cadlag}{c\`adl\`ag}
\newcommand{\Cov}{\mathbb{C}\mathrm{ov}}
\newcommand{\Var}{\mathbb{V}\mathrm{ar}}
\newcommand{\Corr}{\mathbb{C}\mathrm{orr}}
\newcommand{\Pk}{\mathfrak{P}}
\renewcommand{\d}{{\rm d}}
\newcommand{\e}{\varepsilon}
\renewcommand{\P}{{\mathbb P}}
\newcommand{\N}{{\mathbb N}}
\newcommand{\Acal}{{\mathcal A}}
\newcommand{\Bcal}{{\mathcal B}}
\newcommand{\Fcal}{{\mathcal F}}
\newcommand{\Hcal}{{\mathcal H}}
\newcommand{\Ical}{{\mathcal I}}
\newcommand{\Pcal}{{\mathcal P}}
\newcommand{\fdot}{{\,\cdot\,}}
\newcommand{\id}{{\rm id}}
\DeclareMathOperator{\diam}{diam}
\begin{document}

\title{Controlled measure-valued martingales: a viscosity solution approach}
\author{Alexander M.~G.~Cox\thanks{Department of Mathematical
    Sciences, University of Bath, Claverton Down, Bath BA2 7AY, U.~K., \url{a.m.g.cox@bath.ac.uk}.}
\and Sigrid K{\"a}llblad\thanks{Department of Mathematics, KTH Royal Institute of Technology, Lindstedtsv{\"a}gen 25, 100 44 Stockholm, Sweden, \url{sigrid.kallblad@math.kth.se}.
}
\and Martin Larsson\thanks{Department of Mathematical Sciences, Carnegie Mellon University, Wean Hall, 5000 Forbes Ave, Pittsburgh, Pennsylvania 15213, USA, \url{larsson@cmu.edu}.}
\and Sara Svaluto-Ferro\thanks{Department of Economics, University of Verona, Via Cantarane 24, 37129 Verona, Italy, \url{sara.svalutoferro@univr.it}.
The author gratefully acknowledges financial support by the Vienna Science and Technology Fund (WWTF) under grant MA16-021.}}
\maketitle

\begin{abstract}
	
	We consider a class of stochastic control problems where the state process is a probability measure-valued process satisfying an additional martingale condition on its dynamics, called measure-valued martingales (MVMs). We establish the `classical' results of stochastic control for these problems: specifically, we prove that the value function for the problem can be characterised as the unique solution to the Hamilton-Jacobi-Bellman equation in the sense of viscosity solutions. In order to prove this result, we exploit structural properties of the MVM processes. Our results also include an appropriate version of It\^o's formula for controlled MVMs.

	We also show how problems of this type arise in a number of applications, including model-independent derivatives pricing, the optimal Skorokhod embedding problem, and two player games with asymmetric information.
	
\end{abstract}

{\hypersetup{linkcolor=black}
\tableofcontents
}

\section{Introduction}

Recently there has been substantial interest in understanding
stochastic control of processes which take values in the set of probability
measures. In particular, the study of stochastic control problems
where the underlying state variable is a probability measure have been
studied in a number of contexts such as mean-field games, and
McKean-Vlasov dynamics. In this paper, we consider
stochastic control problems where the state process is a
probability measure-valued process, satisfying an additional
martingale condition which restricts the possible dynamics of the
process. The restrictions on the dynamics of the process provide
enough regularity to prove the `classical' theorems of stochastic
control, specifically, dynamic programming, identification of the
value function as a solution (in an appropriate sense) to a
Hamilton-Jacobi-Bellman (HJB) equation and a verification theorem for
`classical' solutions. Under stronger conditions, we are also able to
prove comparison for the HJB equation, allowing characterisation of
the value function as the unique solution to this equation.

The probability measure-valued evolution we wish to study as our underlying state
variable is the class of \emph{measure-valued martingales}, or
\emph{MVMs}, introduced in \cite{cox2017}. A process $(\xi_t)_{t \ge
  0}$, taking values in the space of probability measures on $\R^d$ is an MVM if $\xi_t(\varphi) := \int_{\R^d} \varphi(x)\xi_t(\dx)$ is a
martingale for every bounded continuous function $\varphi \in C_b(\R^d)$. Such processes arise naturally
in a number of contexts, and we outline some of these applications
below.

In \cite{cox2017}, MVMs were introduced in the context of model-independent pricing and hedging of financial derivatives. In this application, the measure $\mu$ has an interpretation as the implied distribution of the asset price $S_T$ given the information at time $t$, $\xi_t(A) = \Q(S_T \in A|\Fc_t)$, where $\Q$ is the risk-neutral measure. In the model-independent pricing literature, initiated by \cite{hobson1998}, one typically does not assume that the law of the process $S$ is known, but rather one observes market information in terms of the European call prices with maturity time $T$, and tries to find bounds on the prices of exotic derivatives as the maximum/minimum over all models which fit with the market information. In practice, since the market prices of call options imply that the law of $S_T$ is known at time zero via the Breeden-Litzenberger formula, \citep{breeden_prices_1978}, this turns out to be equivalent to knowing $\xi_0$, the starting point of the MVM from market information; the risk-neutral assumption additionally grants that the process $\xi$ will then be an MVM under any risk-neutral measure. Optimising over all models for $S$ which have terminal law $\xi_0$ can be shown to be equivalent to optimising over the laws of MVMs which start at $\xi_0$ and satisfy an additional terminal condition. While increasing the complexity of the optimisation problem by making the state variable infinite dimensional, this avoids the tricky distributional constraint on the terminal law of the process. In \cite{cox2017} and \cite{bayraktar_martingale_2018}, this connection was used to characterise the model-independent bounds of Asian and American-type options.
See also e.g. \cite{kallblad2017dynamic} for the use of MVMs to address distribution-constrained optimal stopping problems. 

Further related to this problem, although also of interest in its own
right, is the problem of finding optimal solutions to the Skorokhod
Embedding Problem. Given an integrable measure $\mu$ and a Brownian
motion $B$, the Skorokhod Embedding problem (SEP) is to find a
stopping time $\tau$ such that the process
$(B_{t \wedge \tau})_{t \ge 0}$ is uniformly integrable, and
$B_\tau \sim \mu$. By introducing the conditioned, probability
measure-valued process $\xi_t(A) := \Pr(B_{\tau} \in A|\Fc_t)$, it
follows that $B_{t \wedge \tau} = \int_{\R} x \xi_t(\dx)$. In this case,
the process $\xi_t$ is evidently an MVM, and in fact,
it can be shown that there is an equivalence between solutions to the
SEP and MVMs which \emph{terminate}, that is, converge to a (random)
point mass (see \cite{cox2017}). In many applications of the SEP, one is interested in finding
\emph{optimal} solutions to the SEP (see
\cite{obloj2004,beiglboeck2017}), and one approach is to
reformulate this problem in terms of the MVM, and to optimise over the
class of MVMs. Approaches to the SEP using an MVM-like perspective can
be traced back (indirectly) to the construction of
\cite{bass_skorokhod_1983}. More recent developments in this direction
include \cite{eldan2016} and \cite{beiglboeck2017mvm}.

A second class of problems in which MVMs naturally arise is in the
setting of two-player, zero sum games with asymmetric
information. These games were initially introduced in discrete time by
\cite{aumann1995}, and subsequently have been the subject of
systematic investigation by Cardialiguet, Rainer and Gr\"un, among
others
(\cite{cardaliaguet_rainer2009_no_mvm,cardaliaguet_rainer2009,cardaliaguet2009,cardaliaguet2012,gensbittel2018,grun_dynkin_2013}). In
these games, the payoff of the game depends on a parameter $\theta$
which is known at the outset to the first player, but which is unknown
to the second player, whose belief in the value of the parameter is
known to be some probability measure $\xi_0$. In the game, both players
act to optimise their final reward, and the actions of the first player
may inform the second player about the value of the parameter. It
follows that the posterior belief of the second player at time $t$,
$\xi_t$ follows the dynamics of an MVM. Moreover, the strategies of
the first player can be reformulated into a control problem, where the
state variable of the problem is the posterior belief of the second
player, $\xi_t$. Consequently, the game formulation fits into the
setup of a controlled MVM problem. 

Our main results follow the classical approach to stochastic
control. We will make one major restriction to the full generality of
the problem by assuming that we can restrict our MVM to processes
driven by a Brownian motion. In this framework, we will postulate
dynamics for the MVM in terms of an SDE where we are able to identify
a natural class of (function-valued) controls. Once this natural set
of controls is established, we are able to formulate the control
problem for a controlled measure-valued process. In this setting, we
then proceed to establish a corresponding Hamilton-Jacobi-Bellman
(HJB) equation which we expect our value function to satisfy. In order
to uniquely characterise the value function, it is necessary to
introduce an appropriate sense of weak solution to the HJB equation,
which we do using viscosity theory. Specifically, we introduce a
notion of viscosity solution which, in our setting and under
appropriate conditions on the problem, allows us to show that the
value function is a viscosity solution to the HJB equation, and also
prove a comparison result, under which we further conclude that the
value function is the unique such solution. Our notion of viscosity
solution will exploit the specific nature of the dynamics of the MVM
and allows to prove some of the viscosity results above, which are
notoriously hard to prove in the general setting of measure-valued
processes. Our proof of comparison depends on a continuity assumption on the value function, which is not required for our other results. This is needed for a reduction to the case of finitely supported measures, where finite-dimensional viscosity theory can be applied. It would be of great interest to find a proof of comparison without any \emph{a priori} continuity of the value function.

Our results have connections with existing results
in the literature. Broadly, we believe that a special case of our class of MVMs corresponds to a controlled filtering problem, where the process being filtered is constant. There is an
existing literature on these problems, culminating with e.g.~\cite{fabbri_stochastic_2017,gozzi_hamiltonjacobibellman_2000, nisio_book_2015}. In
comparison with our approach, these works formulate the dynamics of
the problem in terms of an (unnormalised) density function, which is
embedded in an appropriate vector space. In comparison, we formulate
our problem directly in the underlying (metric) space of probability
measures. More recently, \citep{bandini_randomized_2019} considered a
related problem in metric space setting, however their control problem
arises in the context of partial observation of a diffusion, and the
two problems do not appear to be directly comparable.

More recently, there has been substantial interest in McKean-Vlasov
equations, including viscosity solutions for
control problems where the state variables take values in the space of
probability measures. In particular, this involves obtaining It\^o formulas for probability measure-valued processes arising as the (conditional or unconditional) laws of an underlying state process; see \cite{cha_cri_del_14,MR3630288,pham_bellman_2018,car_del_18_I,carmona_probabilistic_2018,burzoni_viscosity_2020,guo_pha_wei_20,tal_tou_zha_21,cosso_optimal_2020,wu_viscosity_2020}. However, these probability measure-valued processes are not MVMs except in degenerate instances, and these papers therefore have limited bearing on the results we develop here.
To see this, we observe that a key property of MVMs is to always decrease the support of the measures. As such, measure-valued dynamics such as McKean-Vlasov are generally excluded from our analysis, since they are the limits of particle approximations, where the particles naturally spread out on account of their diffusive nature. Trivially, any MVM which is started in an atomic measure will never gain support outside the initial atoms, and hence any attempt to interpret it as the limit of diffusive particle models such as McKean-Vlasov will fail unless the particles are all assumed to be constant.


The rest of the paper is structured as follows. In
Section~\ref{sec:meas-valu-mart} we give a formal definition of an MVM
and establish certain helpful properties, including giving a natural
notion of control of MVMs. In Section~\ref{sec:contr-probl-dynam} we formally state our stochastic control
problem, and show the important, non-trivial fact that constant
controls exist in our formulation. In Section~\ref{sec:diff-calc} we
establish an appropriate differential calculus in our setting, which
enables us, in Section~\ref{S_Ito} to prove a version of It\^o's formula
in our setting. In Section~\ref{S_viscosity} we state our main result,
including our definition of a viscosity solution and a verification
result for classical solutions. The proofs of the
main result are then detailed in Sections~\ref{sec:visc-subs-prop},
\ref{sec:visc-supers-prop} and \ref{sec:comparison-principle} where we
prove the sub-, and super-solution properties, and a comparison
principle; the proof of the dynamic programming principle is deferred to Appendix~\ref{app:DPP}.
Finally, in Section~\ref{sec:applications} we give some
concrete examples of solvable control problems, and also explain how
our main results relate to the applications set out above.
Appendix \ref{secC} reports some auxiliary properties of the notion of derivative used in this paper. \\

\noindent\textbf{Notation.}
The following notation will feature throughout the paper. We fix $d \in \N$.

\begin{itemize}
\item $\Pcal$ denotes the space of probability measures on $\R^d$ with the topology of weak convergence. $\Pcal_p$ for $p \in [1,\infty)$ denotes the probability measures whose $p$-th moment is finite, endowed with the Wasserstein-$p$ metric. We set $\Pcal_0 = \Pcal$ by convention. All these spaces are Polish. Finally, $\Pcal^s$ denotes the (closed) subset of probability measures supported in one single point.
\item $C_b(\R^d)$ and $C_c(\R^d)$ are the bounded continuous and compactly supported continuous functions on $\R^d$, respectively. They are frequently abbreviated as $C_b$ and $C_c$. We also write $C(\Pcal_p)$ for the real-valued continuous functions on $\Pcal_p$.
\item 
For $\mu\in \Pcal_p$ and $\varphi:\R^d\to \R$ such that $\int_{\R^d}|\varphi(x)| \mu(\d x)<\infty$ we set 
$$\mu(\varphi):=\int_{\R^d} \varphi(x) \mu(\d x).$$
When $d = 1$ we write $\Mb(\mu):=\mu(\id)$ (if $p \ge 1$), where $\id \colon x \mapsto x$ is the identity function, and $\Var(\mu) := \int_{\R^d} x^2 \mu(\dx) - (\mu(\id))^2$ (if $p \ge 2$). In addition, we write the covariance under $\mu$ of two functions $\varphi$ and $\psi$ as $\Cov_\mu(\varphi,\psi) := \mu(\varphi \psi) - \mu(\varphi) \mu(\psi)$ and similarly $\Var_\mu(\varphi) := \Cov_\mu(\varphi,\varphi)$. Note that $\Var(\mu) = \Var_\mu(\id)$.
\end{itemize}

\section{Measure-valued martingales}
\label{sec:meas-valu-mart}

\begin{definition}
A \emph{measure-valued martingale} (MVM) is a $\Pcal$-valued adapted stochastic process $\xi=(\xi_t)_{t\ge0}$, defined on some filtered probability space $(\Omega,\Fcal,(\Fcal_t)_{t\ge0},\P)$, such that $\xi(\varphi)$ is a real-valued martingale for every $\varphi\in C_b$. We say that an MVM is \emph{continuous} if it has weakly continuous trajectories, or equivalently, if $\xi(\varphi)$ is continuous for every $\varphi\in C_b$.
\end{definition}

In this paper we consider control problems and stochastic equations in a weak formulation, meaning that the probability space is not fixed, but rather constructed as needed. Note that there is a connection to the class of `martingale measures' as defined in \eg{} \cite{dawson1993}. However in contrast to the definition there, we make the additional restriction that our processes remain as probability measures.

The following lemma shows that the martingale property of $\xi(\varphi)$ extends beyond bounded continuous functions. It applies to arbitrary MVMs with continuous trajectories.

\begin{lemma}\label{L_lin_fcn}
Let $\xi$ be a continuous MVM, and let $\varphi$ be any nonnegative measurable function such that $\E[\xi_0(\varphi)]<\infty$. Then $\xi(\varphi)$ is a uniformly integrable continuous martingale.
\end{lemma}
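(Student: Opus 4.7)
The plan is a two-stage extension from $\varphi \in C_b$: first to bounded Borel measurable $\varphi$, then to nonnegative measurable $\varphi$ with $\E[\xi_0(\varphi)]<\infty$. Each stage involves (a) the martingale property, handled by a functional monotone class argument and monotone convergence, and (b) sample-path continuity, handled by Doob's maximal inequality applied to a suitable approximation. Uniform integrability is obtained at the end from a truncation-plus-Markov estimate.

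For bounded measurable $\varphi$, let $\Hcal$ denote the vector space of bounded Borel $\varphi$ for which $\xi(\varphi)$ is a martingale. It contains the multiplicative class $C_b$, and is closed under bounded monotone limits by dominated convergence applied first fibrewise to $\xi_t(\omega)$ and then globally in $\P$; the functional monotone class theorem then identifies $\Hcal$ with all bounded Borel measurable functions. For continuity I would approximate $\varphi$ by $\varphi_n \in C_b$ that are uniformly bounded by $\|\varphi\|_\infty$ and satisfy $\int |\varphi_n-\varphi|\,\di \bar\xi_0\to0$, where $\bar\xi_0:=\E[\xi_0]$ is the intensity measure (such $\varphi_n$ exist by density of $C_b$ in $L^1(\bar\xi_0)$, since $\R^d$ is Polish). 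Each $\xi(\varphi_n)$ is a continuous bounded martingale by hypothesis, and Doob's $L^1$-maximal inequality applied to the martingale $\xi(\varphi_n-\varphi_m)$ yields
\[
\P\Bigl(\sup_{t \le T} |\xi_t(\varphi_n)-\xi_t(\varphi_m)|\ge \eps\Bigr)\le \eps^{-1}\E[\xi_0(|\varphi_n-\varphi_m|)]\to 0,
\]
so $\xi(\varphi_n)$ is Cauchy in uniform convergence in probability on $[0,T]$. A subsequence converges uniformly almost surely to a continuous process which agrees a.s.\ with $\xi(\varphi)$ at every fixed $t$, yielding the desired continuous version.

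For unbounded nonnegative $\varphi$ with $\E[\xi_0(\varphi)]<\infty$, I would apply the above to $\varphi_n := \varphi\wedge n$. Monotone convergence and $\E[\xi_t(\varphi_n)]=\E[\xi_0(\varphi_n)]\uparrow \E[\xi_0(\varphi)]$ give $\xi_t(\varphi)<\infty$ a.s.\ and $L^1$-convergence $\xi_t(\varphi_n)\to\xi_t(\varphi)$; passing to the limit in $\E[\xi_t(\varphi_n)\indic{A}]=\E[\xi_s(\varphi_n)\indic{A}]$ for $A\in\Fcal_s$ gives the martingale property. Since $\xi((\varphi-n)^+)=\xi(\varphi)-\xi(\varphi_n)$ is then a nonnegative martingale, Doob once more applies and, because $\E[\xi_0((\varphi-n)^+)]\to 0$, gives uniform convergence in probability on $[0,T]$ of $\xi(\varphi_n)$ to a continuous version of $\xi(\varphi)$. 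Uniform integrability follows uniformly in $t$ from the decomposition $\xi_t(\varphi)\le n + \xi_t((\varphi-n)^+)$:
\[
\E[\xi_t(\varphi)\indic{\xi_t(\varphi)>K}] \le n\,\P(\xi_t(\varphi)>K) + \E[\xi_0((\varphi-n)^+)] \le \frac{n\,\E[\xi_0(\varphi)]}{K} + \E[\xi_0((\varphi-n)^+)],
\]
with the right-hand side made arbitrarily small by choosing $n$ and then $K$ large.

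The main obstacle is the continuity claim: for generic measurable $\varphi$ (e.g.\ $\varphi=\indic{A}$ with $\xi_t(\partial A)>0$) the pointwise-defined process $t\mapsto \xi_t(\omega)(\varphi)$ may genuinely fail to be continuous even though $\xi$ is weakly continuous, so the Doob-based approximation really only produces a \emph{continuous modification} of $\xi(\varphi)$. This is what the lemma's assertion should be understood to mean, and the two-step approximation (first by $C_b$, then by truncation) is exactly what is needed to transfer continuity along the $L^1$-approximation chain.
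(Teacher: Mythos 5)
Your overall architecture (functional monotone class for the martingale property, Doob's maximal inequality plus approximation for continuity, truncation $\varphi\wedge n$ for the unbounded case) matches the paper's in outline. The martingale and uniform integrability parts are correct; in fact your UI argument via $\xi_t(\varphi)\le n+\xi_t((\varphi-n)^+)$ is more self-contained than the paper's, which instead deduces UI from the constancy of $\E[\xi_\tau(\varphi)]$ over all finite stopping times by citing an external characterisation of UI martingales.

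The genuine gap is in the continuity step for bounded measurable $\varphi$, and your closing paragraph misdiagnoses it. Approximating $\varphi$ in $L^1(\bar\xi_0)$ by $\varphi_n\in C_b$ only gives $\xi_t(\varphi_n)\to\xi_t(\varphi)$ for each \emph{fixed} $t$ along a subsequence, with the null set depending on $t$; the uniform limit you produce is therefore merely a continuous modification of $\xi(\varphi)$, as you acknowledge. But the lemma asserts, and the paper proves, that the pointwise-defined process $\xi(\varphi)$ is itself a.s.\ continuous, and this stronger statement is what is used downstream (e.g.\ to define the hitting times $\tau_n=\inf\{t\ge0:\xi_t(\varphi)\ge n\}$ and conclude $\xi_{\tau_n}\in K^\varphi_n$ in Remark~\ref{rem:MVMCompact}\ref{page:mvm_localise_compact}). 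Your claim that continuity of the actual process ``may genuinely fail'' is incorrect: the fix is to run the Doob/Borel--Cantelli argument \emph{inside} the monotone class argument, taking $\Hcal$ to be the set of bounded measurable $\varphi$ for which $\xi(\varphi)$ is a \emph{continuous} martingale and showing closure under bounded increasing limits. If $\varphi_n\uparrow\varphi$ pointwise everywhere, then $\xi_t(\varphi_n)(\omega)\uparrow\xi_t(\varphi)(\omega)$ for every $(\omega,t)$ by monotone convergence of integrals, so the a.s.-uniform limit of a subsequence agrees with $\xi(\varphi)$ at all $t$ simultaneously on a single full-probability event, and the actual process is continuous. The same care is needed in your unbounded stage: there your approximants $\varphi\wedge n$ are already monotone, so the argument would deliver the actual process once the bounded stage is fixed, but Doob's maximal inequality should be applied to the already-continuous martingales $\xi(\varphi\wedge m)-\xi(\varphi\wedge n)$ rather than to $\xi(\varphi)-\xi(\varphi\wedge n)$, whose path regularity is precisely what is not yet known.
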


\begin{proof}
Let $\Hcal$ be the set of all bounded measurable functions $\varphi$
such that $\xi(\varphi)$ is a continuous
martingale (necessarily uniformly bounded). Let $\varphi_n\in\Hcal$, and assume that the $\varphi_n$
increase pointwise to a bounded function $\varphi$. Since $\xi_t(\varphi) = \lim_{n\to\infty} \xi_t(\varphi_n)$, the process $\xi(\varphi)$
is adapted.  The stopping theorem yields $\E[\xi_\tau(\varphi_n)]=\E[\xi_0(\varphi_n)]$ for every finite stopping time $\tau$ and all $n\in\N$, and sending $n\to\infty$ gives $\E[\xi_\tau(\varphi)]=\E[\xi_0(\varphi)]$ by monotone convergence. This implies that $\xi(\varphi)$ is a martingale; see e.g.\ \cite[Proposition~II.1.4]{MR1725357}. Next, since $\xi(\varphi_n)$ is a continuous martingale for every $n$, Doob's inequality yields
\begin{align*}
\P\left( \sup_{t\le T}|\xi_t(\varphi_m)-\xi_t(\varphi_n)| > \varepsilon \right) &\le \frac1\varepsilon \E[|\xi_T(\varphi_m - \varphi_n)|] \\
&\le \frac1\varepsilon \E[\xi_T(\varphi - \varphi_{m\wedge n})]
\end{align*}
for all $T\ge0$, $m,n\in\N$, $\varepsilon>0$. Keeping $\varepsilon>0$ fixed, the dominated convergence theorem implies that the right-hand side vanishes as $m,n\to\infty$. Since $\xi(\varphi_n)$ is continuous for each $n$, so is the limit $\xi(\varphi)$. We have proved that $\varphi\in\Hcal$, and deduce from the monotone class theorem that $\Hcal$ consists of all bounded measurable $\varphi$. Next, let $\varphi$ be nonnegative and measurable with $\E[\xi_0(\varphi)]<\infty$. The same argument as above with $\varphi_n=\varphi\wedge n$ shows that $\E[\xi_\tau(\varphi)]=\E[\xi_0(\varphi)]$ for every finite stopping time $\tau$. Thanks to \cite[Theorem~5.1]{MR2233537}, this implies that $\xi(\varphi)$ is a uniformly integrable martingale, and it is continuous by the same argument as above.
\end{proof}

\begin{remark} \label{rem:MVMCompact}
Lemma~\ref{L_lin_fcn} has several very useful consequences, which are crucial for the methods we use in this paper. In this way the MVM structure is essential. In the following, let $\xi$ be a continuous MVM.
\begin{enumerate}
\item\label{rem:MVMCompact_1} If $\xi_0$ lies in $\Pcal_p$ for some $p \in [1,\infty)$ then, with probability one, so does $\xi_t$ for all $t\ge0$, and the trajectories of $\xi$ are continuous in $\Pcal_p$. To see this, apply Lemma~\ref{L_lin_fcn} with $\varphi(x) = |x|^p$.
\item\label{it2iii} Any continuous MVM $\xi$ has decreasing support in the sense that, with probability one,
\begin{equation}\label{eq_supp_cont}
\text{$\supp(\xi_t)\subseteq\supp(\xi_s)$ whenever $t\ge s$.}
\end{equation}
To see this, let $\Ical$ be the countable collection of all open  balls in $\R^d$ with rational centre and radius, and define $\Ical(\mu)=\{I\in\Ical\colon \mu(I)=0\}$ for $\mu\in\Pcal$. Then $\supp(\mu)=\R^d\setminus\bigcup_{I\in\Ical(\mu)} I$. Now, for every $I\in\Ical$, $\xi(I)$ is a nonnegative martingale that stops once it hits zero, at least off a nullset $N$ that does not depend on $I\in\Ical$. Therefore, off $N$, $\Ical(\xi_s)\subseteq\Ical(\xi_t)$ for all $s\le t$. This yields \eqref{eq_supp_cont}.

\item\label{dlvp} (De~la~Vall\'ee-Poussin) For each $a>0$ and each $\varphi:\R^d\to \R$ given by
  $\varphi(x):=G(|x|)$ for some measurable function $G:\R_+\to\R_+$ with
  $\lim_{t\to\infty}G(t)/t^p=\infty$ the set 
\begin{equation}\label{eqn5}
K^\varphi_a:=\{\mu\in \Pcal_p\colon\mu(\varphi)\leq a\}
\end{equation}
is compact in $\Pcal_p$. Moreover, for each compact set $K\subset \Pcal_p$ there is a function $\varphi$ as before such that $K\subseteq K_a^\varphi$ for some $a>0$.

We provide a few details about these results. By Prohorov's theorem 
 we know that a closed set $K\subseteq \Pcal_p$ is compact if and only if for each $\e>0$ there is a compact set $C\subset \R^d$ such that $\int_{{\R^d}\setminus C} |x|^p\mu(\dx)<\e$ for all $\mu\in K$. The criterion of de~la~Vall\'ee-Poussin then states that this condition is satisfied if and only if there is a function $\varphi$ as before such that 
$$\sup\{\mu(\varphi)\colon \mu\in K\} < \infty.$$
In this case one can choose the function $G$ to be continuous. Since $K^\varphi_a$ is closed for each $a>0$ by the monotone convergence theorem, the claim follows.

\item\label{page:mvm_localise_compact} MVMs can be localised in compact sets. More specifically, if $\xi$ is a continuous MVM starting at $\xi_0=\bar\mu\in\Pcal_p$, 
Remark~\ref{rem:MVMCompact}\ref{dlvp} (De~la~Vall\'ee-Poussin)
 gives a  measurable function $\varphi:\R^d\to\R_+$ such that $\bar\mu(\varphi) < \infty$ and the set
$
K^\varphi_n 
$
given by \eqref{eqn5} is a compact subset of $\Pcal_p$ for each $n\in \N$. With $\tau_n=\inf\{t\ge0\colon\xi_t(\varphi)\ge n\}$ we have $\xi_t\in K_n^\varphi$ for all $t<\tau_n$, and since $\xi(\varphi)$ is a continuous process by Lemma~\ref{L_lin_fcn}, we have that $\xi_{\tau_n}\in K_n^\varphi$ for each $n$ and $\tau_n\to\infty$ as $n\to\infty$.
\end{enumerate}
\end{remark}

In this paper we are interested in MVMs driven by a single Brownian motion. More specifically, our goal is to consider optimal control problems where the controlled state is an MVM $\xi$ given as a weak solution of the equation
\begin{equation}\label{eq_MVM_SDE}
\xi_t(\varphi) = \xi_0(\varphi) + \int_0^t \Cov_{\xi_s}(\varphi,\rho_s) \d W_s \quad \text{for all } \varphi \in C_b
\end{equation}
in a sense to be made precise below, where $\rho$ is a progressively measurable function acting as the control.

\begin{remark} \label{R_prog_meas_fcn}
A \emph{progressively measurable function} from $\R^d$ to $\R$ on a filtered measurable space $(\Omega, \Fcal, (\Fcal_t)_{t \ge 0})$ is a map $\rho \colon \Omega \times \R_+ \times \R^d$ that is $\mathfrak P \otimes \Bcal(\R^d)$-measurable, where $\mathfrak P$ is the $\sigma$-algebra on $\Omega\times\R_+$ generated by all progressively measurable processes, and $\Bcal(\R^d)$ is the Borel $\sigma$-algebra on $\R^d$.
\end{remark}

\begin{remark}
Although we will not use it directly in this paper, let us indicate how this type of MVM can be derived from first principles.
Suppose $\xi$ is an MVM on a space whose filtration is generated by a Brownian motion $W$. For any $\varphi\in C_b$, the martingale representation theorem yields
\begin{equation}\label{eq_mg_repr}
\xi_t(\varphi) = \xi_0(\varphi) + \int_0^t \sigma_s(\varphi) \d W_s
\end{equation}
for some progressively measurable process $\sigma(\varphi)$ with $\int_0^t \sigma_s(\varphi)^2  \d s<\infty$ for all $t$. 
In the context of filtration enlargement, \cite{yor1985,yor2012} observed that in various cases of interest one has $\sigma_t(\varphi)=\int \varphi(x) \sigma_t(\dx)$ for a single process $\sigma=(\sigma_t)_{t\ge0}$ that takes values among the signed measures and admits a progressively measurable function $\rho_t(\omega,x)$ such that
\[
\text{$\sigma_t(\varphi)=\xi_t(\varphi \rho_t)-\xi_t(\varphi)\xi_t(\rho_t)$ for all $\varphi\in C_b$.}\footnote{Subtracting $\xi_t(\varphi) \xi_t(\rho_t)$ ensures that $\sigma_t(1) = 0$. Equivalently, by replacing $\rho_t(x)$ by $\tilde\rho_t(x)=\rho_t(x)-\xi_t(\rho_t)$, one gets $\sigma_t(\varphi)=\xi_t(\varphi \tilde\rho_t)$ and $\xi_t(\tilde\rho_t)=0$. This is for instance done in \cite{mansuy2006}; see the table of p.~34. We find it more convenient not to use this convention, in order to avoid the constraint $\xi_t(\tilde\rho_t)=0$.}
\]
Equation \eqref{eq_mg_repr} then takes the form \eqref{eq_MVM_SDE}.

Let us finally mention a condition introduced by \cite{jacod1985}, also in the context of filtration enlargement: $\xi_t(\dx)\ll \xi_0(\dx)$. Under this condition there is a progressively measurable function $f_t(\omega,x)$ such that $\xi_t(\varphi)=\xi_0(\varphi f_t)$ and for every $x$, $f_t(x)$ is a martingale \cite[Lemma~1.8]{jacod1985}. In a Brownian filtration one then has a representation $f_t(x)=1+\int_0^t f_s(x) \tilde\rho_s(x) \d W_s$ for some progressively measurable function $\tilde\rho_t(x)$ \cite[Proposition~3.14]{jacod1985}. Under suitable integrability conditions it follows that Jacod's condition implies Yor's condition. Indeed, multiplying by $\varphi(x)$, integrating against $\xi_0(\dx)$, applying the stochastic Fubini theorem, and comparing with \eqref{eq_mg_repr}, one finds that $\sigma_t(\varphi)=\xi_t(\varphi \tilde\rho_t)$.
\end{remark}


\section{Control problem and dynamic programming}
\label{sec:contr-probl-dynam}

Let us first define what we mean by a weak solution of \eqref{eq_MVM_SDE}.

\begin{definition}\label{def:sol_of_SDE}
A \emph{weak solution} of \eqref{eq_MVM_SDE} is a tuple $(\Omega,\Fcal,(\Fcal_t)_{t\ge0},\P,W,\xi,\rho)$, where $(\Omega,\Fcal,(\Fcal_t)_{t\ge0},\P)$ is a filtered probability space, $W$ is a standard Brownian motion on this space, $\xi$ is a continuous MVM, and $\rho$ is a progressively measurable function on $\Omega\times\R_+\times {\R^d}$ (see Remark~\ref{R_prog_meas_fcn}) such that for every $\varphi\in C_b$, $\P\otimes \d t$-a.e.,
\[
\xi_t(|\rho_t|)<\infty, \quad \int_0^t \Cov_{\xi_s}(\varphi,\rho_s)^2\d s<\infty,
\]
and \eqref{eq_MVM_SDE} holds, that is,
\[
\xi_t(\varphi) = \xi_0(\varphi) + \int_0^t \Cov_{\xi_s}(\varphi,\rho_s) \d W_s \quad \text{for all } \varphi \in C_b.
\]
To simplify terminology, we often call $(\xi,\rho)$ a weak solution, without explicitly mentioning the other objects of the tuple.
\end{definition}

We are interested in a specific class of \emph{controlled} MVMs, specified as follows. Fix $p \in [1,\infty) \cup \{0\}$, $q \in [1,p] \cup \{0\}$, and a Polish space $\Hb$ of measurable real-valued functions on $\R^d$, the set of actions. We make the standing assumption that the evaluation map $(\rho,x) \mapsto \rho(x)$ from $\mathbb H \times \R^d$ to $\R$ is measurable. This ensures that any $\Hb$-valued progressively measurable process is also a progressively measurable function, a property which is used in the proof of the dynamic programming principle in Section~\ref{app:DPP}.
The role of the parameter $p$ will be to specify the state space $\Pcal_p$ of the controlled MVMs, while $q$ will be related to the set of test functions used in the definition of viscosity solution in Section~\ref{S_viscosity}.

\begin{definition}\label{def:adm_control}
An \emph{admissible control} is a weak solution $(\xi,\rho)$ of \eqref{eq_MVM_SDE} such that
\[
\rho_t(\fdot,\omega)\in\Hb
\]
and, $\P\otimes \d t$-a.e., 
\begin{equation}\label{eqn3}
\int_0^t \left( \int_{\R^d} (1 + |x|^q) |\rho_s(x) - \xi_s(\rho_s)| \xi_s(\d x) \right)^2 \d s < \infty.
\end{equation}
\end{definition}

Condition \eqref{eqn3} will later on enable us to apply our It{\^o} formula to any admissible control; here is a sufficient condition for it to hold.

\begin{lemma}\label{rem1}
Fix $r\in[0, p-q]$ and suppose that for each $\rho \in \mathbb H$ there is a constant $c$ such that $\rho(x)\leq c(1+|x|^r)$.  Then \eqref{eqn3} holds for any weak solution $(\xi,\rho)$ of \eqref{eq_MVM_SDE} such that $\xi_0 \in \Pcal_p$ and $\rho_t(\fdot,\omega)\in\Hb$.
\end{lemma}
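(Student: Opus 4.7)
My plan is to bound the inner integrand in \eqref{eqn3} pathwise by a quantity that depends on $\xi_s$ only through the $p$-th moment $\xi_s(|x|^p)$, and then to invoke Lemma~\ref{L_lin_fcn} with $\varphi(x)=|x|^p$ to conclude that this moment process has a.s.\ continuous, hence locally bounded, paths, making the outer integral a.s.\ finite.

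Concretely, at fixed $(s,\omega)$ I would set $M_s:=\xi_s(\rho_s)$ and first bound $|M_s|\leq c\,\xi_s(1+|x|^r)$ using the hypothesis (interpreted as a two-sided bound $|\rho_s(x)|\leq c(1+|x|^r)$; a one-sided bound can be handled analogously by splitting $(\rho_s-M_s)^\pm$ and using the identity $\xi_s((\rho_s-M_s)^+)=\xi_s((\rho_s-M_s)^-)$). Then the triangle inequality yields
\[
(1+|x|^q)|\rho_s(x)-M_s| \;\leq\; c\,(1+|x|^q)\bigl(1+|x|^r+\xi_s(1+|x|^r)\bigr).
\]
Since $q\leq p$ and $q+r\leq p$, elementary estimates give $(1+|x|^q)(1+|x|^r)\leq C(1+|x|^p)$ and $(1+|x|^q)\leq C(1+|x|^p)$, so integrating against $\xi_s$ and collecting terms would produce
\[
\int_{\R^d}(1+|x|^q)|\rho_s(x)-M_s|\,\xi_s(\df x) \;\leq\; C'\,c\,\bigl(1+\xi_s(|x|^p)\bigr)^2
\]
for some universal $C'$. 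Squaring and integrating over $s\in[0,t]$ would then bound the left-hand side of \eqref{eqn3} by $(C')^2 c^2 t \sup_{s\leq t}\bigl(1+\xi_s(|x|^p)\bigr)^4$.

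To close the argument I would apply Lemma~\ref{L_lin_fcn} to the nonnegative measurable function $\varphi(x)=|x|^p$: since $\xi_0\in\Pcal_p$ gives $\xi_0(|x|^p)<\infty$, the process $\xi(|x|^p)$ is a continuous uniformly integrable martingale. Its paths are therefore locally bounded a.s., so the supremum is a.s.\ finite, and \eqref{eqn3} follows.

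The main obstacle I anticipate is that, read literally, the constant $c$ in the hypothesis may depend on $\rho\in\Hb$, so one must also control $\int_0^t c(\rho_s)^2\,\df s$. I would handle this by replacing $c$ by the progressively measurable quantity $c_s:=\esssup_x|\rho_s(x)|/(1+|x|^r)$ and localizing along the stopping times $\tau_n:=\inf\{s\geq 0:c_s>n\}$, on each of which the previous estimate applies with $c$ replaced by $n$. If $c$ can instead be chosen uniformly on $\Hb$ (as in the applications of Section~\ref{sec:applications}), this localization step is unnecessary.
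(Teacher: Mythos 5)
Your proof is correct and follows essentially the same route as the paper's: bound the inner integrand via the growth hypothesis and $q+r\le p$ in terms of the moments $\xi_s(1+|x|^m)$ for $m\le p$, then use continuity of these moment processes (Lemma~\ref{L_lin_fcn}, Remark~\ref{rem:MVMCompact}\ref{rem:MVMCompact_1}) to get local boundedness in $s$ and hence finiteness of the time integral in \eqref{eqn3}. Your two side remarks are apt --- the paper's proof silently treats the constant as independent of $\rho_s$, and it does use the bound two-sidedly --- but note that your localization patch requires $\tau_n\to\infty$, and your proposed reduction of the one-sided case via $\xi_s((\rho_s-M_s)^+)=\xi_s((\rho_s-M_s)^-)$ controls only the unweighted integral of the negative part, not the $(1+|x|^q)$-weighted one needed when $q\ge1$.
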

\begin{proof}
Note that $\xi$ takes values in $\Pcal_p$ thanks to Remark~\ref{rem:MVMCompact}\ref{rem:MVMCompact_1}. Observe that $\P\otimes \d s$-a.e.
\begin{align*}
& \int_{\R^d} (1 + |x|^q) |\rho_s(x) - \xi_s(\rho_s)| \xi_s(\dx) \\
&\qquad \leq C \bigg(\int_{\R^d} (1+|x|^{q+r}) \xi_s(\dx)+\int_{\R^d} (1+|x|^{q}) \xi_s(\dx)\int_{\R^d} (1+|x|^{r}) \xi_s(\dx)\bigg),
\end{align*}
for some $C\geq0$. Since $s\mapsto\int_{\R^d} (1+|x|^{m}) \xi_s(\dx)$ is a continuous map for each $m\leq p$, condition \eqref{eqn3} follows.
\end{proof}

We consider the following control problem. In addition to the action space $\Hb$, fix a measurable cost function
\[
c\colon\Pcal_p\times\Hb\to\R \cup \{+\infty\}
\]
and a discount rate $\beta\ge0$. The value function is given by
\begin{equation}\label{eq_value_function}
v(\mu) = \inf\left\{  \E\left[ \int_0^\infty e^{-\beta t} c(\xi_t,\rho_t) \dt \right] \colon \text{$(\xi,\rho)$ admissible control, $\xi_0=\mu$}\right\}
\end{equation}
for every $\mu\in\Pcal_p$. Note that the value function depends on $\Hb$ through the definition of admissible control. Because $\xi_0=\mu$ lies in $\Pcal_p$, so does $\xi_t$ for all $t$. Thus $c(\xi_t,\rho_t)$ is well-defined. We will also want to ensure that the control problem is itself well-defined, in the sense that the expectation appearing in the expression above is well-defined for all admissible controls. To ensure this, we assume that
\begin{equation}\label{eqn9}
\int_0^\infty e^{-\beta t} \E\left[c(\xi_t,\rho_t)_-\right] \dt<\infty
\end{equation}
holds for every admissible control $(\xi, \rho)$, where $x_- = \max\{0, -x\}$ denotes the negative part of $x$.
This is trivially true if we suppose that $c(\xi,\rho)$ is bounded below. More generally, if there exists a non-negative, uniformly integrable martingale $M_t$ such that $c(\xi_t,\rho_t)_- \le M_t$, then \eqref{eqn9} is satisfied.

\begin{remark}
It would be natural to assume that $c(\mu,\rho)=c(\mu,\rho')$ for any $\mu\in\Pcal_p$ and $\rho,\rho'\in\Hb$ such that $\rho-\rho'$ is constant on $\supp(\mu)$. This is natural because  equation \eqref{eq_MVM_SDE} cannot detect any difference between $\rho$ and $\rho'$, since $\Cov_\mu(\varphi,\rho)=\Cov_\mu(\varphi,\rho')$. It is then reasonable that two such controls should produce the same cost. Our arguments do not require this assumption however, so we do not impose it. 
\end{remark}

\begin{remark}\label{rem2}
In view of Lemma~\ref{rem1} a natural choice for the set $\Hb$ arising in applications is 
$$\Hb:=\{\rho\in C(\R^d)\colon \rho(x)\leq c(1+|x|^r)\}$$
for some fixed $c>0$ and $r\in[0,p-q]$. In some of our applications it will however be convenient to include an additional \emph{state-dependent} constraint on the controls. Specifically, it would be desirable to assume in addition that the control $\rho_t$ belongs to $\Hb(\xi_t)$, a state-dependent subset of $\Hb$. Instances in this sense are $\Hb(\mu):=\{\rho\in \Hb\colon \Var_\mu(\rho)\leq \Var(\mu)\}$ (see Example~\ref{ex92}) or $\Hb(\mu)=\left\{\rho\in\Hb:\Cov_{\mu}(\mathrm{id},\rho) \in (1-\kappa,1+\kappa) \right\}$ (see Section~\ref{sec:SEP}).
Rather than formulating this condition directly in the definition of an admissible strategy we enforce the state dependence in a weak formulation. Specifically, suppose there is a set $A \subseteq \Pcal_p \times \Hb$ which we wish our process and the corresponding control to remain within, for example, $A = \{(\xi,\rho)\colon \xi \in \Pcal_p, \rho \in \Hb(\xi)\}$. Then it is natural to only optimise over solutions for which $\int_0^\infty \bm1_{\{(\xi_t,\rho_t) \in A^{\complement} \}} \dt = 0$ almost surely. This can be achieved in the existing framework by ensuring that the cost function $c$ takes the value $+\infty$ on the set $A^{\complement}$. In the subsequent arguments, we will allow cost functions of this form, although our main assumptions will impose some properties on $A$ (typically that $A$ is open).
\end{remark}

\begin{remark}
As noted in the introduction, our class of MVMs appears to most closely relate to problems of controlled, partially observed diffusions, however the connections are largely conceptual, rather than exact. We here comment on these connections.

One key property of the class of measure-valued processes that we study is that the support of the process will always decrease. In this sense, the class of processes that we consider certainly does not include the full class of processes that arise in partially observed filtering, where, if the process is known to start inside some interval, the posterior measure will in general not be confined to that interval. See, for example, \cite{fabbri_stochastic_2017} for a discussion of partially observed control problems.

However, the class of problems we consider can be interpreted as a type of controlled observation process, where there is a constant signal $Y$ which is being observed with some noise, and has initial prior $\xi_0$, say. A typical filtering problem might then observe a signal process $Z_t = \int_0^t h(Y) \, ds + W_t$, for some (independent) Brownian motion $W$. The MVM $\xi$ would then be defined as the current posterior measure $\xi_t :=\mathrm{Law} (Y|\mathcal{F}^Z_t)$, where $\mathcal{F}^Z$ is the filtration generated by $Z$. In the context of partial observation, we believe that our MVM problems correspond to examples where the function $h$ may be allowed to depend on some control variable, the choice of which may incur some cost depending on the current posterior belief of the true state. In addition, our running cost $c$ may depend in a non-trivial way on both the control and the posterior measure, in a way that is far more general than in e.g. \cite{fabbri_stochastic_2017} and \cite{bandini_randomized_2019}. For example, our cost function permits control problems where the cost depends on the variance of the posterior measure, and more generally may be a non-linear function of the current posterior measure. We note that by the MVM property, in the case where the cost $c$ is independent of the control and linear in the measure, $c(\mu, \rho) = \mu(\tilde c)$ for some $\tilde c$, the problem degenerates completely for then $\E[c(\xi_t)] = \xi_0(\tilde c)$ by the martingale property, and the optimisation problem becomes trivial.

In most of the literature on controlled partially observed diffusions, control is allowed only in the behaviour of the diffusion, so the overlap between our problem and the problems considered in these parts of the partial observation literature are essentially only trivial cases where the control has no impact on the problem. In a limited number of papers, e.g. \cite{bandini_backward_2018}, some control of the observations are allowed. Here our results would potentially overlap with their setting under the assumption that the controlled process $Y$ is constant. The most general version of this approach that we are aware of appears in the book \cite{nisio_book_2015}, which covers examples where there may be overlap with the control problems we consider. However our results are not directly applicable. Our approach works directly with probability measures; Nisio works on a Sobolev space of (unnormalised) density functions, and we do not restrict our state process in this manner. Our setting also includes cases where there may be no dominating probability measure, and thus no regularity requirements on the densities, which are crucial to Nisio's approach.
\end{remark}

The following result states that the value function satisfies a dynamic programming principle. Let $C(\R_+,\Pcal_p)$ be the set of continuous functions from $\R_+$ to $\Pcal_p$. We say that $\tau$ is a \emph{stopping time on $C(\R_+,\Pcal_p)$} if $\tau\colon{C(\R_+,\Pcal_p)}\to\R_+$ is a stopping time with respect to the (raw) filtration generated by the coordinate process on $C(\R_+,\Pcal_p)$. In this case, for any admissible control $(\xi,\rho)$, $\tau(\xi)$ is a stopping time with respect to the filtration generated by the admissible control, where $\tau(\xi)$ is given by $\omega\mapsto\tau(\xi_\cdot(\omega))$. The proof of the following result is given in Appendix~\ref{app:DPP}.

\begin{theorem}\label{thm_dpp_main_text}
Let $\tau$ be a bounded stopping time on $C(\R_+,\Pcal_p)$. For any $\mu\in\Pcal_p$, the value function $v$ defined in \eqref{eq_value_function} satisfies
$$v(\mu) = \inf_{(\xi,\rho)} \E\left[ e^{-\beta\tau(\xi)}v(\xi_{\tau(\xi)}) + \int_0^{\tau(\xi)} e^{-\beta t}c(\xi_t,\rho_t) \dt \right]$$
where the infimum extends over all admissible controls $(\xi,\rho)$ with $\xi_0=\mu$.
\end{theorem}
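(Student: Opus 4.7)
The plan is to establish the two inequalities separately, following the standard scheme for dynamic programming principles but paying careful attention to the infinite-dimensional state space $\Pcal_p$ and the function-valued controls in $\Hb$. Write $J(\xi,\rho):=\E[\int_0^\infty e^{-\beta t}c(\xi_t,\rho_t)\dt]$ and, for a bounded stopping time $\tau$ on $C(\R_+,\Pcal_p)$,
\[
J_\tau(\xi,\rho) := \E\left[ e^{-\beta\tau(\xi)}v(\xi_{\tau(\xi)}) + \int_0^{\tau(\xi)} e^{-\beta t}c(\xi_t,\rho_t) \dt \right].
\]
I want to show $v(\mu)=\inf_{(\xi,\rho)} J_\tau(\xi,\rho)$ by proving $v(\mu)\le \inf J_\tau$ and $v(\mu)\ge \inf J_\tau$.

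For the inequality $v(\mu)\le \inf J_\tau$, I fix an admissible control $(\xi,\rho)$ with $\xi_0=\mu$ and, separately, for every $\nu\in\Pcal_p$ an admissible control $(\widehat\xi^\nu,\widehat\rho^\nu)$ with $\widehat\xi^\nu_0=\nu$. The key step is the \emph{concatenation} producing an admissible control $(\bar\xi,\bar\rho)$ that coincides with $(\xi,\rho)$ on $[0,\tau(\xi)]$ and, conditionally on $\Fcal_{\tau(\xi)}$, behaves like $(\widehat\xi^{\xi_{\tau(\xi)}},\widehat\rho^{\xi_{\tau(\xi)}})$ after $\tau(\xi)$. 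This is done on a product canonical space via regular conditional probabilities; the driving Brownian motion is the concatenation of $W$ with an independent Brownian increment, and one verifies by taking conditional expectations against $\varphi\in C_b$ that $\bar\xi$ still satisfies \eqref{eq_MVM_SDE} and remains a continuous MVM. The integrability condition \eqref{eqn3} for $\bar\rho$ is inherited from the two pieces. From $v(\mu)\le J(\bar\xi,\bar\rho)$ and the tower property,
\[
v(\mu)\le \E\Big[\int_0^{\tau(\xi)} e^{-\beta t}c(\xi_t,\rho_t)\dt\Big] + \E\big[e^{-\beta\tau(\xi)} J(\widehat\xi^{\xi_{\tau(\xi)}},\widehat\rho^{\xi_{\tau(\xi)}})\big].
\]
Taking the essential infimum over continuations $(\widehat\xi^\nu,\widehat\rho^\nu)$ for each $\nu$ gives $v(\xi_{\tau(\xi)})$ in the second term; because the choice of continuation depends on $\xi_{\tau(\xi)}$, I invoke a Jankov--von Neumann type measurable selection to build an $\eps$-optimal $\nu\mapsto (\widehat\xi^\nu,\widehat\rho^\nu)$ that is universally measurable in $\nu\in\Pcal_p$, and then let $\eps\downarrow 0$.

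For the reverse inequality $v(\mu)\ge\inf J_\tau$, I start from any admissible $(\xi,\rho)$ with $\xi_0=\mu$. Splitting $J(\xi,\rho)$ at $\tau(\xi)$ and using the tower property together with the fact that, conditionally on $\Fcal_{\tau(\xi)}$, the post-$\tau$ dynamics of $(\xi,\rho)$ again form an admissible control with initial law $\xi_{\tau(\xi)}$, one obtains $\E[\int_{\tau(\xi)}^\infty e^{-\beta(t-\tau)}c(\xi_t,\rho_t)\dt\mid\Fcal_{\tau(\xi)}]\ge v(\xi_{\tau(\xi)})$ a.s. This step requires a disintegration of an admissible control into a family of admissible controls indexed by $\xi_{\tau(\xi)}$, which again relies on regular conditional probabilities on the canonical path space $C(\R_+,\Pcal_p)\times C(\R_+,\R)$ enlarged by the control $\rho$ (viewed as a random element of an appropriate Polish space of progressively measurable functions). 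Taking infimum over $(\xi,\rho)$ then yields $v(\mu)\ge\inf J_\tau(\xi,\rho)$.

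The hardest technical step will be the measurable selection / disintegration: establishing that the graph $\{(\nu,(\xi,\rho)) : (\xi,\rho) \text{ is an admissible control with } \xi_0=\nu\}$ is an analytic subset of the product of $\Pcal_p$ with a suitable Polish space of paths and controls, so that the Jankov--von Neumann selection theorem applies and yields universally measurable $\eps$-optimal selectors. Polishness of $\Pcal_p$ (noted after the Notation section) and of $\Hb$ (assumed in the setup), together with the joint measurability of $(\rho,x)\mapsto \rho(x)$ on $\Hb\times\R^d$ (which guarantees that $\Hb$-valued progressively measurable processes yield progressively measurable functions in the sense of Remark~\ref{R_prog_meas_fcn}), make this analyticity plausible: the conditions in Definition~\ref{def:adm_control} are countable intersections of Borel-in-$(\xi,\rho)$ conditions once one tests \eqref{eq_MVM_SDE} against a countable dense subset of $C_b$. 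Once selection and concatenation are in place, \eqref{eqn9} ensures all the integrals are well-defined and finite after negative parts are handled, and the two inequalities combine to give the announced identity.
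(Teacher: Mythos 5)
Your overall strategy is the same as the paper's: pass to a weak formulation on a canonical path space, prove the two inequalities via concatenation and disintegration of measures, and handle the dependence of the continuation on $\xi_{\tau(\xi)}$ by a measurable-selection argument (the paper cites the standard machinery of El Karoui--Tan and \v{Z}itkovi\'c after establishing the key lemmas). However, two steps that you flag as "plausible" are exactly the points the paper's Appendix~\ref{app:DPP} is built to resolve, and as written they would not go through.

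First, the claim that the admissibility conditions become "countable intersections of Borel-in-$(\xi,\rho)$ conditions once one tests \eqref{eq_MVM_SDE} against a countable dense subset of $C_b$" fails if you test the SDE itself: the stochastic integral $\int_0^t \Cov_{\xi_s}(\varphi,\rho_s)\,\d W_s$ is not a Borel functional of the paths $(W,\xi,\rho)$, so the event that \eqref{eq_MVM_SDE} holds is not directly a Borel (or even obviously analytic) subset of the canonical space. The paper's fix is to replace the SDE by the local martingale problem \eqref{eq:mp_simplified}, which involves only Lebesgue integrals of continuous functionals of the path; Lemma~\ref{lem:dpp_equiv_problems} then shows equivalence with the SDE formulation via the It\^o formula and a characterisation of the joint quadratic variations, and Lemma~\ref{lem:analytic} shows the graph is actually Borel (not merely analytic), using localisation by the hitting times $H_{\pm n}$ to turn the local martingale property into countably many Borel conditions. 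Second, "an appropriate Polish space of progressively measurable functions" does not exist off the shelf: the space of progressively measurable $\Hb$-valued processes carries no canonical Polish topology. The paper's device is to encode the control as an element $m$ of the Polish space $\Mb$ of measures on $\R_+\times\Hb$ (relaxed controls with the stable topology), restrict to the Borel subset $\Mb_0$ of Dirac-type kernels, and recover $\rho_t$ measurably from $m$ by differentiating $\int_0^t\int_\Hb\psi(u)\,m(\ds,\du)$ using the Borel isomorphism $\psi$ of $\Hb$ onto a subset of $[0,1]$. Without these (or equivalent) devices, the analyticity of the graph and hence the selection and concatenation steps remain unproved, so your argument has a genuine gap at its self-identified hardest point.
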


To ensure that the control problem \eqref{eq_value_function} is
nontrivial, we need to confirm that for any initial point
$\mu\in\Pcal_p$, there exists some admissible control. In the following
result, we prove this fact. 

\begin{theorem}\label{thm5}
For any measurable function $\bar\rho\colon {\R^d}\to\R$ and any $\mu\in\Pcal$, there exists a weak solution $(\xi,\rho)$ of \eqref{eq_MVM_SDE} such that $\xi_0=\mu$ and $\rho_t=\bar\rho$ for all $t$.
\end{theorem}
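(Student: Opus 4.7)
The plan is to construct the solution via classical nonlinear filtering. On an auxiliary probability space, take a random variable $X$ with law $\mu$ and a standard Brownian motion $B$ independent of $X$, and define the signal-plus-noise observation
$$Y_t := \bar\rho(X)\, t + B_t, \quad t \ge 0,$$
with (completed) natural filtration $(\Fcal_t^Y)_{t \ge 0}$. Set $\xi_t$ to be a regular conditional distribution of $X$ given $\Fcal_t^Y$. Since this is the conditional law of a fixed random variable with respect to an increasing filtration, $\xi_t(\varphi) = \E[\varphi(X)\mid \Fcal_t^Y]$ is automatically a martingale for every $\varphi \in C_b$, so $\xi$ is a $\Pcal$-valued MVM starting at $\xi_0 = \mu$.

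The next step is to invoke the Kallianpur--Striebel representation
$$\xi_t(\varphi) = \frac{\tilde\pi_t(\varphi)}{\tilde\pi_t(1)}, \qquad \tilde\pi_t(\varphi) := \int_{\R^d} \varphi(x)\, e^{\bar\rho(x) Y_t - \frac12 \bar\rho(x)^2 t}\, \mu(\d x),$$
derived by Bayes' theorem after a Girsanov change of measure under which $Y$ is a Brownian motion independent of $X$. Continuity of $t \mapsto \xi_t(\varphi)$ for every $\varphi \in C_b$ follows by dominated convergence applied to the numerator and denominator, and so $\xi$ is a continuous MVM. The Gaussian factor $e^{-\bar\rho(x)^2 t/2}$ is the crucial feature: for each $t > 0$ it supplies $\xi_t(|\bar\rho|^k) < \infty$ almost surely for every $k \ge 0$, regardless of any integrability of $\bar\rho$ under $\mu$. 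This also provides the moment conditions in Definition~\ref{def:sol_of_SDE}.

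I would then define the innovation process
$$W_t := Y_t - \int_0^t \xi_s(\bar\rho)\, \d s,$$
and verify by L\'evy's characterization---it is a continuous $(\Fcal_t^Y)$-martingale with quadratic variation $t$---that $W$ is an $(\Fcal_t^Y)$-Brownian motion. Applying It\^o's formula to $\Lambda_t(x) := e^{\bar\rho(x) Y_t - \bar\rho(x)^2 t/2}$ for each fixed $x$, using $\d\langle Y\rangle_t = \d t$, gives $\d\Lambda_t(x) = \bar\rho(x) \Lambda_t(x)\, \d Y_t$; integrating against $\mu(\d x)$ via stochastic Fubini yields $\d\tilde\pi_t(\varphi) = \tilde\pi_t(\varphi \bar\rho)\, \d Y_t$. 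The It\^o quotient rule applied to $\xi_t(\varphi) = \tilde\pi_t(\varphi)/\tilde\pi_t(1)$ then produces the Kushner--Stratonovich equation
$$\d\xi_t(\varphi) = \Cov_{\xi_t}(\varphi, \bar\rho)\, \d W_t, \quad \varphi \in C_b,$$
which is exactly \eqref{eq_MVM_SDE} with the constant-in-time control $\rho_t(\omega, x) \equiv \bar\rho(x)$, which is trivially a progressively measurable function on $\Omega \times \R_+ \times \R^d$.

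The main obstacle is the potential non-integrability of $\bar\rho(X)$ under $\mu$, which blocks the direct Girsanov change of measure and the standard innovations argument. I would handle this by first truncating to $\bar\rho_n := (-n \vee \bar\rho) \wedge n$, carrying out the construction above for each bounded $\bar\rho_n$ (where every step is classical), and then passing to the limit using the explicit Kallianpur--Striebel expressions and dominated convergence. On any time interval bounded away from zero, the Gaussian factor furnishes moment bounds that are uniform in $n$, so the posterior $\xi_t^n$ and its covariance functionals converge to those associated with $\bar\rho$, and the limiting object is seen to satisfy the SDE against the corresponding innovation Brownian motion.
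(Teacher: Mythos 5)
Your construction is, at bottom, the same as the paper's: the Kallianpur--Striebel numerator $\tilde\pi_t(\varphi)=\int\varphi(x)e^{\bar\rho(x)Y_t-\frac12\bar\rho(x)^2t}\mu(\d x)$ is exactly the unnormalised process $\eta_t(\varphi)=\int\varphi(x)\Ec(\bar\rho(x)X)_t\,\xi_0(\dx)$ that the paper builds on the canonical observation space under Wiener measure, your innovation process is the paper's $W$, and the It\^o quotient rule step is the paper's Step~4. The filtering language is a nice way to see why $\xi$ is an MVM, but it does not change the analysis that has to be done.

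The genuine gap is in your treatment of non-integrable $\bar\rho$, which is precisely where the paper's proof spends its effort. First, for the innovation process $W_t=Y_t-\int_0^t\xi_s(\bar\rho)\,\ds$ to be defined from time $0$, you need $\int_0^t\xi_s(|\bar\rho|)\,\ds<\infty$; almost-sure finiteness of $\xi_s(|\bar\rho|)$ for each fixed $s>0$ does not give this, because the Gaussian damping degenerates as $s\downarrow0$. The paper handles this with a quantitative bound, $|\bar\rho(x)|\,\Ec(\bar\rho(x)X)_s\le(|X_s|/s+s^{-1/2})\exp(X_s^2/(2s))$, combined with the law of the iterated logarithm to show the right-hand side is $\ds$-integrable near $0$; the same kind of estimate is needed to justify stochastic Fubini. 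Your proposal only asserts bounds "on any time interval bounded away from zero," which is not enough. Second, your truncation scheme $\bar\rho_n=(-n\vee\bar\rho)\wedge n$ changes the observation process $Y^n_t=\bar\rho_n(X)t+B_t$, hence the filtration $\Fcal^{Y^n}_t$, the conditional laws $\xi^n$, and the innovation Brownian motions $W^n$ all vary with $n$; passing to the limit in the stochastic integrals $\int_0^t\Cov_{\xi^n_s}(\varphi,\bar\rho_n)\,\d W^n_s$ is then not a dominated-convergence exercise, and it is not even clear in what sense $\xi^n\to\xi$. The paper avoids truncation entirely: it establishes the SDE on $[s,t]$ for each $s>0$ directly for the untruncated $\bar\rho$, and then sends $s\downarrow0$ using the identification of $\int_s^t\Cov_{\xi_u}(\varphi,\bar\rho)^2\,\du$ with an increment of the quadratic variation of the martingale $\xi(\varphi)$. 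To complete your argument you would need either the paper's direct estimates or a genuinely new compactness/identification argument for the truncated filters.
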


\begin{proof}
Let $\Omega=C(\R_+,\R)$ be the canonical path space of continuous functions. Let $X$ be the coordinate process, $\Fb$ the right-continuous filtration generated by $X$, $\Fc=\Fc_\infty$, and $\Q$ the Wiener measure. Thus $X$ is a standard Brownian motion under $\Q$. Let $\bar\rho\colon {\R^d}\to\R$ be a measurable function. For each fixed $x\in {\R^d}$, the process $\Ec(\bar\rho(x)X)$ is geometric Brownian motion and in particular a martingale. Define a strictly positive process $Z$ by
\[
Z_t = \int_{\R^d} \Ec(\bar\rho(x)X)_t \xi_0(\dx).
\]
This is finite, because
\begin{equation}\label{eq_P_existence_1_new_01}
\Ec(\bar\rho(x)X)_t=\exp\left(\bar\rho(x)X_t-\frac12 \bar\rho(x)^2 t \right) \le \exp\left(\frac{X_t^2}{2t} \right)
\end{equation}
for $t>0$, independently of $x$. We now define the desired process $\xi$ by
\[
\xi_t(\dx) = \frac{1}{Z_t}\Ec(\bar\rho(x)X)_t \xi_0(\dx).
\]
This is clearly probability measure valued, but it may not be an MVM. However, by replacing $\Q$ with another probability measure $\P$, we can turn $\xi$ into an MVM with the required properties. This is done in a number of steps.

\underline{Step~1.}
The conditional version of Tonelli's theorem gives
\[
\E_\Q[Z_t\mid\Fc_s] = \int_{\R^d} \E_\Q[\Ec(\bar\rho(x)X)_t\mid\Fc_s] \xi_0(\dx) = \int_{\R^d}  \Ec(\bar\rho(x)X)_s \xi_0(\dx) = Z_s
\]
for all $s\le t$. Thus $Z$ is a martingale with $Z_0=1$. For each $n\in\mathbb N$, define an equivalent probability $\P_n\sim \Q|_{\Fc_n}$ on $\Fc_n$ by using $Z_n$ as Radon--Nikodym derivative. The $\P_n$ are consistent in the sense that $\P_{n+1}|_{\Fc_n}=\P_n$ for all $n$, and we have $\Fc=\bigvee_{n\ge1}\Fc_n$. A standard argument now gives a probability measure $\P$ on $\Fc$ such that $\P|_{\Fc_n}=\P_n$ for all $n$; see \cite[Section~3.5A]{kar_shr_91}.

It is now clear that $\xi$ is an MVM under $\P$. Indeed, for $\varphi\in C_b$, the product $Z \xi(\varphi)=\int_{\R^d} \varphi(x)\Ec(\bar\rho(x)X) \xi_0(\dx)$ is a martingale under $\Q$. Therefore $\xi(\varphi)$ is a martingale under $\P$, showing that $\xi$ is an MVM.

\underline{Step~2.}
We claim that
\begin{equation}\label{eq_P_existence_1_new_001001}
\text{$\int_0^t \xi_s(|\bar\rho|)\ds<\infty$ for all $t$,}
\end{equation}
and that the process
\begin{equation}\label{eq_P_existence_1_new_001}
W_t = X_t - \int_0^t \xi_s(\bar\rho) \ds
\end{equation}
is a Brownian motion under $\P$. Suppose for now that \eqref{eq_P_existence_1_new_001001} holds. Integration by parts then gives
\begin{equation}\label{eq_P_existence_1_new_001004}
Z_t W_t = Z_t X_t - \int_0^t Z_s \xi_s(\bar\rho) \ds - \int_0^t \left( \int_0^s \xi_u(\bar\rho)\d u\right) \d Z_s.
\end{equation}
Moreover, integration by parts and the stochastic Fubini theorem \cite[Theorem~2.2]{ver_12} give
\begin{equation}\label{eq_P_existence_1_new_001005}
\begin{aligned}
Z_tX_t &= \int_{\R^d} X_t \Ec(\bar\rho(x)X)_t \xi_0(\dx) \\
&= \int_{\R^d} \int_0^t ( 1 + \bar\rho(x)X_s )\Ec(\bar\rho(x)X)_s \d X_s \xi_0(\dx) \\
&\quad + \int_{\R^d} \int_0^t \bar\rho(x) \Ec(\bar\rho(x)X)_s \ds \xi_0(\dx)\\
&= \int_0^t  \int_{\R^d} ( 1 + \bar\rho(x)X_s )Z_s\xi_s(\dx) \d X_s + \int_0^t Z_s \xi_s(\bar\rho) \ds,
\end{aligned}
\end{equation}
and the first term on the right-hand side is a local martingale under $\Q$. Note that the use of the stochastic Fubini theorem will be justified in the next step. Combining \eqref{eq_P_existence_1_new_001004} and \eqref{eq_P_existence_1_new_001005}, we conclude that $ZW$ is a local martingale under $\Q$. Thus $W$ is a local martingale under $\P$, hence Brownian motion under $\P$, as claimed.

\underline{Step~3.}
We must still prove \eqref{eq_P_existence_1_new_001001} and justify our use of the stochastic Fubini theorem. The latter amounts to checking that
\begin{equation}\label{eq_P_existence_1_new_2}
\int_{\R^d} \int_0^t |\bar\rho(x)|\Ec(\bar\rho(x)X)_s \ds \xi_0(\dx) < \infty
\end{equation}
and
\begin{equation}\label{eq_P_existence_1_new_3}
\int_{\R^d} \left( \int_0^t (1+\bar\rho(x)X_s)^2\Ec(\bar\rho(x)X)_s^2\ds\right)^{1/2} \xi_0(\dx) < \infty
\end{equation}
for all $t$. Then \eqref{eq_P_existence_1_new_001001} follows from \eqref{eq_P_existence_1_new_2} and the fact that $\inf_{s\in[0,t]}Z_s>0$ for all $t$. We now prove \eqref{eq_P_existence_1_new_2}. The elementary inequality
\[
|a| \exp\left( ab - \frac12 a^2s\right) \le \left(\frac{|b|}{s} + \frac{1}{s^{1/2}} \right) \exp\left( \frac{b^2}{2s} \right),
\]
valid for all $a,b\in\R$ and $s>0$, gives
\begin{equation}\label{eq_P_existence_1_new_6}
|\bar\rho(x)|\Ec(\bar\rho(x)X)_s \le \left(\frac{|X_s|}{s} + \frac{1}{s^{1/2}} \right) \exp\left( \frac{X_s^2}{2s} \right).
\end{equation}
The law of the iterated logarithm shows that for some $\delta\in(0,e^{-e})$ (depending on $\omega$), we have $|X_s| \le \sqrt{3s\log\log(1/s)}$ for all $s<\delta$. We use this bound to get
\begin{align*}
\int_0^\delta \left(\frac{|X_s|}{s} + \frac{1}{s^{1/2}} \right) \exp\left( \frac{X_s^2}{2s} \right) \ds &\le  \int_0^\delta 2 \sqrt{\frac1s\log\log\frac1s} \left(\log\frac1s\right)^{3/2} \ds \\
&= \int_{-\log\delta}^\infty 2 (\log s)^{1/2}  s^{3/2} e^{-s/2} \ds \\
&< \infty.
\end{align*}
Since the right-hand side of \eqref{eq_P_existence_1_new_6} is continuous on $[\delta,t]$, the integral over this interval is also finite. It follows that \eqref{eq_P_existence_1_new_2} holds.

We now verify \eqref{eq_P_existence_1_new_3}. From \eqref{eq_P_existence_1_new_01} and \eqref{eq_P_existence_1_new_6}, along with two applications of the inequality $(a+b)^2\le 2a^2+2b^2$, we get
\[
(1+\bar\rho(x)X_s)^2\Ec(\bar\rho(x)X)_s^2 \le  \left( 2 + \frac{4 X_s^4}{s^2} + \frac{4 X_s^2}{s} \right) \exp\left( \frac{X_s^2}{s} \right).
\]
Using the law of the iterated logarithm as above, we find that the integral of the right-hand side over $(0,t]$ is finite. Thus \eqref{eq_P_existence_1_new_3} holds.

\underline{Step~4.}
It remains to argue that \eqref{eq_MVM_SDE} holds. To this end, define the measure-valued process $\eta_t(\dx) = \Ec(\bar\rho(x)X)_t\xi_0(\dx)$. Thus in particular, $\xi_t(\dx)=\eta_t(\dx)/\eta_t(1)$. Pick any $\varphi\in C_b$ and $0<s\le t$. Using the stochastic Fubini theorem \cite[Theorem~2.2]{ver_12} we get
\begin{align*}
\eta_t(\varphi) - \eta_s(\varphi) &= \int_{\R^d} \varphi(x) \left( \Ec(\bar\rho(x) X)_t - \Ec(\bar\rho(x) X)_s \right) \xi_0(\dx) \\
&= \int_{\R^d}  \int_s^t \varphi(x)  \bar\rho(x) \Ec(\bar\rho(x)X)_u \d X_u \xi_0(\dx) \\
&= \int_s^t \int_{\R^d}  \varphi(x)  \bar\rho(x) \Ec(\bar\rho(x)X)_u \xi_0(\dx) \d X_u \\
&= \int_s^t \eta_u( \varphi \bar\rho )  \d X_u.
\end{align*}
The stochastic Fubini theorem is applicable because $\varphi$ is bounded and since by \eqref{eq_P_existence_1_new_6} it holds
\[
\int_{\R^d} \int_s^t \bar\rho(x)^2 \Ec(\bar\rho(x)X)_u^2 \du \xi_0(\dx)
\leq\sup_{u\in[s,t]}  \left(\frac{|X_u|}{u} + \frac{1}{u^{1/2}} \right)^2 \exp\left( \frac{X_u^2}{u} \right),
\]
which is finite since $s>0$.
An application of It\^o's formula now gives
\begin{align}
\xi_t(\varphi)-\xi_s(\varphi) &= \int_s^t \d\left( \frac{\eta_u(\varphi)}{\eta_u(1)}\right) \notag\\
&= \int_s^t (\xi_u(\varphi\bar\rho) - \xi_u(\varphi)\xi_u(\bar\rho))(\d X_u - \xi_u(\bar\rho)\du) \notag\\
&= \int_s^t \Cov_{\xi_u}(\varphi,\bar\rho)\d W_u, \label{eq_P_existence_1_new_30}
\end{align}
recalling the definition \eqref{eq_P_existence_1_new_001} of $W$. We now extend this to $s=0$. Observe that
\begin{align*}
\int_0^t \Cov_{\xi_u}(\varphi,\bar\rho)^2 \du &= \lim_{s\downarrow 0} \int_s^t \Cov_{\xi_u}(\varphi,\bar\rho)^2 \du \\
&= \lim_{s\downarrow 0} \Big( \langle \xi(\varphi) \rangle_t -  \langle \xi(\varphi) \rangle_s \Big) = \langle \xi(\varphi) \rangle_t < \infty,
\end{align*}
where we use that $\xi(\varphi)$ is a continuous process that we have already shown to be a martingale and we denote by $ \langle \xi(\varphi) \rangle$ its quadratic variation process. The dominated convergence theorem for stochastic integrals now allows us to send $s$ to zero in \eqref{eq_P_existence_1_new_30} and obtain \eqref{eq_MVM_SDE}.
\end{proof}


\section{Differential calculus}
\label{sec:diff-calc}

We now develop the differential calculus required to formulate It\^o's formula in Section~\ref{S_Ito} and the HJB equation in Section~\ref{S_viscosity}. The derivatives used here are essentially what is called \emph{linear functional derivatives} in \cite[Section~5.4]{car_del_18_I}.

\subsection{First order derivatives}

\begin{definition}\label{D_C1r}
Let $p \in [1,\infty) \cup \{0\}$. A function $f\colon\Pcal_p\to\R$ is said to belong to $C^1(\Pcal_p)$ if there is a continuous function $(x,\mu)\mapsto\frac{\partial f}{\partial\mu}(x,\mu)$ from  ${\R^d}\times\Pcal_p$ to $\R$, called (a version of) the \emph{derivative} of $f$, with the following properties.
\begin{itemize}
\item \textbf{locally uniform $p$-growth:} for every compact set $K\subseteq\Pcal_p$, there is a constant $c_K$ such that for all  $x\in {\R^d}$ and $\mu\in K$,
\begin{equation}\label{eq_C1r_p_growth}
\left| \frac{\partial f}{\partial\mu}(x,\mu) \right| \le c_K (1+|x|^p),
\end{equation}
\item \textbf{fundamental theorem of calculus:} for every $\mu,\nu\in\Pcal_p$,
\begin{equation}\label{eq_C1r_FTC}
f(\nu) - f(\mu) = \int_0^1 \int_{\R^d} \frac{\partial f}{\partial\mu}(x,t\nu + (1-t)\mu)(\nu-\mu)(\dx)\dt.
\end{equation}
\end{itemize}
\end{definition}

\begin{remark}
  This is called \emph{linear functional derivative} by \cite[Definition~5.43]{car_del_18_I} although they require the stronger property that \eqref{eq_C1r_p_growth} hold uniformly on \emph{bounded} rather than \emph{compact} subsets of $\Pcal_p$. This notion of derivative, including its second-order analogue, has long been used in the context of measure-valued processes, sometimes implicitly; see e.g.~\cite{MR542340,dawson1993}. Note that  if  $(x,\mu)\mapsto\frac{\partial f}{\partial\mu}(x,\mu)$ is a version of the derivative of $f$, then the same holds for $(x,\mu)\mapsto\frac{\partial f}{\partial\mu}(x,\mu)+a(\mu)$ for each continuous map  $\mu\mapsto a(\mu)$. Modulo additive terms of this form, the derivative is uniquely determined. Note also that if $f:\Pcal_p\to\R$ belongs to $C^1(\Pcal_p)$, it is automatically continuous. For more details on these properties see Appendix~\ref{secC}.
 \end{remark}

\begin{remark} \label{R_C1Pq_in_C1Pp}
If $q < p$, then $C^1(\Pcal_q) \subset C^1(\Pcal_p)$ in the sense that if $g \in C^1(\Pcal_q)$ and $f$ is the restriction of $g$ to $\Pcal_p$, then $f \in C^1(\Pcal_p)$ and $\frac{\partial f}{\partial\mu}(x,\mu)=\frac{\partial g}{\partial\mu}(x,\mu)$. Indeed, the restriction is well-defined because $\Pcal_p \subset \Pcal_q$. Moreover, the topology on $\Pcal_p$ is stronger than that on $\Pcal_q$, so $(x,\mu)\mapsto\frac{\partial g}{\partial\mu}(x,\mu)$ remains continuous on ${\R^d}\times\Pcal_p$. If $K$ is compact in $\Pcal_p$ it is also compact in $\Pcal_q$, and a $q$-growth bound implies a $p$-growth bound. This gives the locally uniform $p$-growth condition. The fundamental theorem of calculus carries over as well, as it is now only required for $\mu,\nu$ in the smaller set $\Pcal_p$.
\end{remark}

Consider a function $f$ of the form
\begin{equation}\label{eq_C1_cylinder}
f(\mu)=\tilde f(\mu(\varphi_1),\ldots,\mu(\varphi_n)),
\end{equation}
where $n\in\N$, $\tilde f\in C^1(\R^n)$, and $\varphi_1,\ldots,\varphi_n\in C_b({\R^d})$. We refer to such a function as a \emph{$C^1$ cylinder function}. A version of its derivative is
\begin{equation}\label{eq_C1r_der_cyl_ex}
\frac{\partial f}{\partial\mu}(x,\mu) = \sum_{i=1}^n \partial_i\tilde f(\mu(\varphi_1),\ldots,\mu(\varphi_n)) \varphi_i(x),
\end{equation}
where $\partial_i\tilde f$ denotes partial derivative with respect to the $i$-th variable.

Any $C^1$ cylinder function belongs to $C^1(\Pcal_p)$ for every $p$. The following result gives a kind of approximate converse: every function belonging to $C^1(\Pcal_p)$ can be approximated by $C^1$ cylinder functions. This is crucial in our proof of the It\^o formula.

\begin{theorem}\label{T_approx_cyl}
Let $f \in C^1(\Pcal_p)$ for some $p \in [1,\infty) \cup \{0\}$. Then there exist $C^1$ cylinder functions $f_n$ such that one has the pointwise convergence
\begin{equation}\label{T_approx_cyl_1}
f_n(\mu) \to f(\mu) \quad\text{and}\quad \frac{\partial f_n}{\partial\mu}(x,\mu) \to \frac{\partial f}{\partial\mu}(x,\mu)
\end{equation}
for all $\mu \in \Pcal_p$, $x \in {\R^d}$, and for every compact set $K\subset\Pcal_p$ there is constant $c_K$ such that
\begin{equation}\label{T_approx_cyl_2}
|f_n(\mu)| \le c_K \quad\text{and}\quad \left|  \frac{\partial f_n}{\partial\mu}(x,\mu) \right| \le c_K (1 + |x|^p)
\end{equation}
for all $\mu\in K$, $x \in {\R^d}$, $n\in\N$.
\end{theorem}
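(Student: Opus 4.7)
The plan is to realize each $f_n$ as the composition $f\circ\Pi_n$, where $\Pi_n\colon\Pcal_p\to\Pcal_p$ is a cylinder-type projection whose image lies in the simplex spanned by Dirac masses at a finite grid and which converges pointwise to the identity on $\Pcal_p$. Concretely, for each $n$ I fix a nonnegative partition of unity $\{\varphi_i^n\}_{i=1}^{k_n}\subset C_b(\R^d)$ (so that $\sum_i\varphi_i^n\equiv 1$) subordinate to a sufficiently fine covering of $\R^d$ that exhausts the space as $n\to\infty$, and choose representatives $x_i^n\in\supp\varphi_i^n$. Set
\begin{equation*}
\Pi_n(\mu):=\sum_{i=1}^{k_n}\mu(\varphi_i^n)\,\delta_{x_i^n},\qquad \mu^y:=\sum_i y_i\delta_{x_i^n}\text{ for }y\in\Delta_{k_n},
\end{equation*}
where $\Delta_{k_n}:=\{y\in\R_+^{k_n}:\sum_i y_i=1\}$. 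A standard argument (weak convergence via uniform continuity of test functions on the bulk and tail control via the uniform $p$-integrability guaranteed on compacts of $\Pcal_p$) shows $\Pi_n(\mu)\to\mu$ in $\Pcal_p$ for every $\mu\in\Pcal_p$, and that $K\cup\bigcup_n\Pi_n(K)$ is relatively compact in $\Pcal_p$ for every compact $K\subset\Pcal_p$.

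To turn $f\circ\Pi_n$ into a cylinder function of the form \eqref{eq_C1_cylinder}, define $F\colon\Delta_{k_n}\to\R$ by $F(y):=f(\mu^y)$. Applying \eqref{eq_C1r_FTC} along the linear path $t\mapsto\mu^{y+t(y'-y)}$ yields
\begin{equation*}
F(y')-F(y)=\sum_{i=1}^{k_n}(y_i'-y_i)\int_0^1 g\bigl(x_i^n,\mu^{y+t(y'-y)}\bigr)\dt,\qquad g:=\frac{\partial f}{\partial\mu},
\end{equation*}
for $y,y'\in\Delta_{k_n}$. Prescribe on $\Delta_{k_n}$ the $1$-jet $(F,\nabla F)$ with $\partial_i F(y):=g(x_i^n,\mu^y)$; the identity above, combined with the uniform continuity of $g$ on the compact set $\{x_i^n:i\le k_n\}\times\{\mu^y:y\in\Delta_{k_n}\}$, delivers the Whitney compatibility $F(y')-F(y)-(y'-y)\cdot\nabla F(y)=o(|y'-y|)$ uniformly in $y,y'\in\Delta_{k_n}$. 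Whitney's $C^1$-extension theorem then produces $\tilde f_n\in C^1(\R^{k_n})$ realising this jet on $\Delta_{k_n}$, and $f_n(\mu):=\tilde f_n(\mu(\varphi_1^n),\dots,\mu(\varphi_{k_n}^n))$ is a $C^1$ cylinder function. Since $\sum_i\varphi_i^n\equiv 1$ forces $(\mu(\varphi_j^n))_j\in\Delta_{k_n}$, one has $f_n(\mu)=f(\Pi_n(\mu))$, so $f_n(\mu)\to f(\mu)$ pointwise by continuity of $f$.

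For the derivative, the chain rule together with the prescribed values of $\nabla\tilde f_n$ on $\Delta_{k_n}$ yields
\begin{equation*}
\frac{\partial f_n}{\partial\mu}(x,\mu)=\sum_{i=1}^{k_n}\partial_i\tilde f_n(\mu(\varphi^n))\,\varphi_i^n(x)=\sum_{i=1}^{k_n}g\bigl(x_i^n,\Pi_n(\mu)\bigr)\,\varphi_i^n(x),
\end{equation*}
a weighted average of $g$-values over the grid. For fixed $x$, only indices $i$ with $x\in\supp\varphi_i^n$ contribute, and for these $x_i^n\to x$ as $n\to\infty$; since also $\Pi_n(\mu)\to\mu$ in $\Pcal_p$, joint continuity of $g$ gives pointwise convergence to $g(x,\mu)$. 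The locally uniform bounds in \eqref{T_approx_cyl_2} follow by applying the $p$-growth hypothesis on $g$ over the compact set $\overline{K\cup\bigcup_n\Pi_n(K)}$ together with the identity $\sum_i\varphi_i^n(x)=1$.

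The most delicate step is the Whitney compatibility: $F|_{\Delta_{k_n}}$ determines only the tangential components of $\nabla F$, so the prescription $\partial_i F(y)=g(x_i^n,\mu^y)$ must be checked against what the tangential gradient of $F$ on the simplex actually demands. For any tangent direction $v$ (satisfying $\sum_i v_i=0$) the identity above forces $v\cdot\nabla F(y)=\sum_i v_i g(x_i^n,\mu^y)$, which our prescription satisfies; the normal component of $\nabla F$ is a free parameter, and any change in its value only alters $\partial f_n/\partial\mu$ by a continuous function of $\mu$ alone, precisely the indeterminacy permitted by the remark after Definition~\ref{D_C1r}. This freedom is what allows the clean formula displayed above and is essential for the pointwise convergence of the derivatives.
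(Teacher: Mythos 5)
Your construction is essentially identical to the paper's: your $\Pi_n$ is the paper's adjoint operator $T_n^*$, your $f_n = f\circ\Pi_n$ and the derivative formula $\sum_i g(x_i^n,\Pi_n(\mu))\varphi_i^n(x)$ coincide with the paper's $T_n\frac{\partial f}{\partial\mu}(\fdot,T_n^*\mu)(x)$, and both proofs pass through the fundamental-theorem-of-calculus identity on the simplex plus Whitney's $C^1$ extension theorem. The only point to make explicit is your choice of representatives: the paper picks $x_i^n$ of \emph{minimal norm} in $\overline{U_i^n}$ so that $|x_i^n|\le|x|$ on $U_i^n$ (in particular on the unbounded cover element $\R^d\setminus B_n$), which is what makes the relative compactness of $\bigcup_n\Pi_n(K)$ and the uniform $p$-growth bound in \eqref{T_approx_cyl_2} go through; with an arbitrary $x_i^n\in\supp\varphi_i^n$ these two claims, which you assert as standard, would need a separate justification.
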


The proof relies on the following construction, which leads to a useful way of `discretising' probability measures in $\Pcal_p$. Fix $n\in \N$, and cover the compact ball  $B_n:=\{x\in {\R^d}\colon |x|\leq n\}$ by finitely many open sets of diameter at most $1/n$, denoted by $U^n_i$, $i=1,\ldots,N_n$. Append $U^n_0={\R^d}\setminus B_n$ to get an open cover of ${\R^d}$. Finally, fix points $x^n_i$ in $\overline{U^n_i}$ with minimal norm. We have achieved that
\begin{equation}\label{eq_part_unit_diam}
\text{$\diam(U^n_i) \le \frac1n$, $i=1,\ldots,N_n$}
\end{equation}
and
\begin{equation}\label{eq_part_unit_p_growth}
\text{$|x^n_i| \le |x|$ for all $x\in U^n_i$, $i=0,\ldots,N_n$.}
\end{equation}
Now let $\{\psi^n_i\}$ be a partition of unity subordinate to $\{U^n_i\}$: that is, each $\psi_i^n$ is a continuous function, supported on $U_i^n$, and such that $\sum_{i=0}^{N_n} \psi_i^n(x) = 1$ for all $x \in {\R^d}$. For any function $\varphi$ on $\R^d$, define a new function $T_n\varphi$ by
\[
T_n\varphi(x) = \sum_{i=0}^{N_n} \varphi(x^n_i)\psi^n_i(x).
\]
Observe that $T_n\varphi$ is always continuous. Moreover, taking $\varphi(x)=h(|x|)$ for any nonnegative increasing function $h$, we have from \eqref{eq_part_unit_p_growth} that
\begin{equation}\label{eq_Tn_growth_pres}
T_n\varphi(x) = \sum_{i=0}^{N_n} h(|x^n_i|) \psi^n_i(x) \le \sum_{i=0}^{N_n} h(|x|) \psi^n_i(x) = \varphi(x).
\end{equation}
In particular, if $\varphi$ satisfies a $p$-growth bound on ${\R^d}$ of the form $|\varphi(x)|\le c(1+|x|^p)$, it follows that $T_n\varphi$ satisfies the same bound.

The operator $T_n$ admits an `adjoint' $T_n^*$ that acts on probability measures by the formula
\[
T_n^*\mu = \sum_{i=0}^{N_n} \mu(\psi^n_i) \delta_{x^n_i}.
\]
Note that $T_n^*\mu$ is again a probability measure. The terminology and notation is motivated by the identity
\begin{equation}\label{eq_Tn_adjoint}
\mu(T_n\varphi) = \sum_{i=0}^{N_n}\varphi(x^n_i) \mu(\psi^n_i) = (T_n^*\mu)(\varphi).
\end{equation}
In particular, applying this with $\varphi(x)=|x|^p$ and using \eqref{eq_Tn_growth_pres} shows that $T_n^*$ maps $\Pcal_p$ to itself.

\begin{lemma}\label{L_Tn_prop}
The operators $T_n$ satisfy the following basic properties.
\begin{enumerate}
\item\label{L_Tn_prop_a1} if $K\subset\Pcal_p$ is a compact set, one can find another compact set $K'\subset\Pcal_p$, containing $K$, such that $T_n^*$ maps $K'$ into itself for all $n$,
\item\label{L_Tn_prop_a2} if $h\colon\R\to\R$ is a convex function, then $h \circ (T_n\varphi) \le T_n(h\circ\varphi)$,
\item\label{L_Tn_prop_0} if $|x|\le n$, then $|T_n\varphi(x)| \le \sup\{|\varphi(y)|\colon y\in {\R^d}, |x-y|<1/n\}$,
\item\label{L_Tn_prop_1} if $\varphi$ is continuous at $x\in {\R^d}$, then $T_n\varphi(x)\to\varphi(x)$,
\item\label{L_Tn_prop_2} if $\varphi$ is continuous everywhere, then $T_n\varphi\to\varphi$ locally uniformly,
\item\label{L_Tn_prop_3} if $\varphi_n\to\varphi$ locally uniformly and $\varphi$ is continuous at $x\in {\R^d}$, then $T_n\varphi_n(x)\to\varphi(x)$,
\item\label{L_Tn_prop_4} $T_n^*\mu \to \mu$ in $\Pcal_p$ for every $\mu\in\Pcal_p$.
\end{enumerate}
\end{lemma}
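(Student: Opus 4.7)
The plan is to exploit that, for each $x$, $T_n\varphi(x)=\sum_i\varphi(x^n_i)\psi^n_i(x)$ is a convex combination of sampled values of $\varphi$, with samples within distance $1/n$ of $x$ whenever $|x|\le n$. From this viewpoint, items \ref{L_Tn_prop_a2}--\ref{L_Tn_prop_3} are essentially bookkeeping. For \ref{L_Tn_prop_a2}, Jensen applied to the probability vector $(\psi^n_i(x))_i$ gives $h(T_n\varphi(x))\le\sum_i\psi^n_i(x)h(\varphi(x^n_i))=T_n(h\circ\varphi)(x)$. For \ref{L_Tn_prop_0}, if $|x|\le n$ then $x\notin U^n_0$, so $\psi^n_0(x)=0$; every $i\ge 1$ contributing to $T_n\varphi(x)$ satisfies $x\in U^n_i$, hence $|x^n_i-x|\le\diam(U^n_i)\le 1/n$ by \eqref{eq_part_unit_diam}, and the convex combination estimate yields the claimed sup bound (the strict inequality being obtained by shrinking the diameters slightly, or equivalently by evaluating the sup over a closed ball). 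Items \ref{L_Tn_prop_1}--\ref{L_Tn_prop_3} then follow by the standard recipe: given $\varepsilon>0$, use continuity of $\varphi$ at $x$ (plus locally uniform convergence of $\varphi_n\to\varphi$ for \ref{L_Tn_prop_3}) to find $\delta$ with $|\varphi(y)-\varphi(x)|<\varepsilon$ on $|y-x|<\delta$, pick $n$ with $1/n<\delta$ and $|x|<n$, and estimate
\[
|T_n\varphi_n(x)-\varphi(x)|\le\sum_i|\varphi_n(x^n_i)-\varphi(x)|\psi^n_i(x)\le\varepsilon+\sup_{|y-x|<\delta}|\varphi_n(y)-\varphi(y)|.
\]
For \ref{L_Tn_prop_2}, the same estimate is uniform on any compact $C$ because $\varphi$ is uniformly continuous on its closed $1$-neighborhood, which contains all the relevant $x^n_i$ for $x\in C$ and $n$ large.

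The one genuinely nontrivial item is \ref{L_Tn_prop_a1}, where I would invoke Remark~\ref{rem:MVMCompact}\ref{dlvp}. Given a compact $K\subseteq\Pcal_p$, that remark supplies a continuous $G\colon\R_+\to\R_+$ with $G(t)/t^p\to\infty$, which after replacing $G$ by its running maximum may be taken nondecreasing, such that $\varphi(x):=G(|x|)$ satisfies $\sup_{\mu\in K}\mu(\varphi)\le a$ for some $a$. Set $K':=K^\varphi_a$: by the same remark $K'$ is compact in $\Pcal_p$ and of course contains $K$, and for every $\mu\in K'$ the identity \eqref{eq_Tn_adjoint} combined with \eqref{eq_Tn_growth_pres} (applicable since $G$ is nonnegative and nondecreasing) gives
\[
(T_n^*\mu)(\varphi)=\mu(T_n\varphi)\le\mu(\varphi)\le a,
\]
so $T_n^*\mu\in K'$, as required.

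For \ref{L_Tn_prop_4}, I would use the standard characterization that $\mu_n\to\mu$ in $\Pcal_p$ if and only if $\mu_n(\psi)\to\mu(\psi)$ for every continuous $\psi$ on $\R^d$ with $|\psi(x)|\le c(1+|x|^p)$ (and just bounded continuous $\psi$ when $p=0$). For any such $\psi$, $(T_n^*\mu)(\psi)=\mu(T_n\psi)$ by \eqref{eq_Tn_adjoint}; pointwise convergence $T_n\psi(x)\to\psi(x)$ follows from \ref{L_Tn_prop_1}; and applying \eqref{eq_Tn_growth_pres} to $h(t)=c(1+t^p)$ gives $|T_n\psi(x)|\le T_n|\psi|(x)\le c(1+|x|^p)$, which is $\mu$-integrable because $\mu\in\Pcal_p$. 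Dominated convergence then delivers $\mu(T_n\psi)\to\mu(\psi)$. The only conceptual obstacle in the whole proof is \ref{L_Tn_prop_a1}: de la Vall\'ee-Poussin is what couples the sampling operator $T_n^*$ to the compactness structure of $\Pcal_p$, and everything else reduces to convex combination and continuity estimates.
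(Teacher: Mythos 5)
Your proof is correct and, for items \ref{L_Tn_prop_a1}--\ref{L_Tn_prop_3}, follows essentially the same route as the paper: de la Vall\'ee-Poussin (Remark~\ref{rem:MVMCompact}\ref{dlvp}) combined with \eqref{eq_Tn_adjoint} and \eqref{eq_Tn_growth_pres} for \ref{L_Tn_prop_a1}, Jensen applied to the probability vector $(\psi^n_i(x))_i$ for \ref{L_Tn_prop_a2}, and the convex-combination/modulus-of-continuity estimates for \ref{L_Tn_prop_0}--\ref{L_Tn_prop_3}. The one place you genuinely diverge is \ref{L_Tn_prop_4}: the paper applies part \ref{L_Tn_prop_a1} with $K=\{\mu\}$ to get relative compactness of $\{T_n^*\mu\}$ in $\Pcal_p$ and then identifies $\mu$ as the only possible limit point by testing against $C_b$ functions with bounded convergence, whereas you verify $\Pcal_p$-convergence directly via the characterization by continuous test functions of $p$-growth, dominating $|T_n\psi|$ by $c(1+|x|^p)$ through \eqref{eq_Tn_growth_pres} and invoking dominated convergence. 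Both are valid; the paper's version never needs unbounded test functions (compactness upgrades weak convergence to $\Pcal_p$-convergence for free), while yours trades the compactness step for the standard Wasserstein-$p$ convergence criterion. One small caveat on \ref{L_Tn_prop_a1}: justifying the monotonicity of $G$ by ``replacing $G$ by its running maximum'' is not sufficient as stated, since the running maximum of an admissible $G$ need not remain uniformly integrable over $K$ (think of $G$ with tall narrow spikes that $K$ barely charges). The correct statement --- which the paper simply asserts --- is that the de la Vall\'ee-Poussin construction already produces an increasing (indeed convex) $G$, and that is what makes \eqref{eq_Tn_growth_pres} applicable.
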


\begin{proof}
\ref{L_Tn_prop_a1}: We apply Remark~\ref{rem:MVMCompact}\ref{dlvp}. If $K$ is compact, then there exists a positive increasing function $h$ with $\lim_{t\to\infty}h(t)/(1+t^p)=\infty$ such that the constant $c=\sup_{\mu\in K}\mu(\varphi)$ is finite, where $\varphi(x)=h(|x|)$. The set $K'=\{\mu\in\Pcal_p\colon \mu(\varphi)\le c\}$ is then compact and contains $K$. Moreover, \eqref{eq_Tn_adjoint} and \eqref{eq_Tn_growth_pres} yield $(T_n^*\mu)(\varphi)=\mu(T_n\varphi)\le\mu(\varphi)$, which shows that $T_n$ maps $K'$ into itself.

\ref{L_Tn_prop_a2}: By definition of partition of unity, $(\psi^n_0(x),\ldots,\psi^n_{N_n}(x))$ forms a vector of probability weights for any fixed $x\in {\R^d}$. Thus by Jensen's inequality, 
$$h(T_n\varphi(x))\le\sum_{i=0}^{N_n}h(\varphi(x^n_i))\psi^n_i(x)=T_n(h\circ\varphi)(x).$$

\ref{L_Tn_prop_0}: If $x\in B_n$ then $x\in U^n_i$ for some $i\ne0$. These sets all have diameter at most $1/n$, so
$$|T_n\varphi(x)|\le\sum_{i=1}^{N_n} |\varphi(x^n_i)|\psi^n_i(x)\le\sup\{|\varphi(y)|\colon y\in {\R^d}, |x-y|<1/n\}.$$

\ref{L_Tn_prop_1}: Let $\omega_x(\delta)$ be an increasing modulus of continuity for $\varphi$ at $x$. Then $|\varphi(x^n_i)-\varphi(x)|\le\omega_x(|x^n_i-x|)\le\omega_x(n^{-1})$ whenever $x$ lies in $U^n_i$ and $i\ne0$. Because $x\notin U^n_0$ for all large $n$, it follows that
\[
|T_n\varphi(x) - \varphi(x)| \le \sum_{i=1}^{N_n} |\varphi(x^n_i) - \varphi(x)|\psi^n_i(x) \le \omega_x(n^{-1}) \to 0.
\]

\ref{L_Tn_prop_2}: Fix a compact set $J\subset {\R^d}$ and let $\omega(\delta)$ be a uniform modulus of continuity for $\varphi$ on $J$. Because $J$ and $U^n_0$ are disjoint for all large $n$, the same computation as above gives $|T_n\varphi(x) - \varphi(x)| \le \omega(n^{-1})$ for all $x\in J$.

\ref{L_Tn_prop_3}: Write $|T_n\varphi_n(x)-\varphi(x)| \le |T_n(\varphi_n-\varphi)(x)| + |T_n\varphi(x)-\varphi(x)|$, and denote the two terms on the right-hand side by $A_n$ and $B_n$, respectively. We have from \ref{L_Tn_prop_0} that $A_n\le
\sup\{|\varphi_n(y)-\varphi(y)|\colon y\in {\R^d}, |x-y|<1/n\}
$ for all large $n$, so that $A_n\to0$. Moreover, thanks to \ref{L_Tn_prop_1}, $B_n\to0$.

\ref{L_Tn_prop_4}: Applying \ref{L_Tn_prop_a1} with $K=\{\mu\}$ shows that the sequence $\{T_n^*\mu\colon n\in\N\}$ is relatively compact in $\Pcal_p$. Its only limit point is $\mu$, because \ref{L_Tn_prop_1} and the bounded convergence theorem yield $(T_n^*\mu)(\varphi)=\mu(T_n\varphi)\to\mu(\varphi)$ for all $\varphi\in C_b$.
\end{proof}

\begin{lemma}\label{L_C1r_der_fn}
Suppose $f$ belongs to $C^1(\Pcal_p)$ and define $f_n(\mu)=f(T_n^*\mu)$. Then $f_n$ is a $C^1$ cylinder function, and a version of its derivative is given by
\begin{equation}\label{eq_L_C1r_der_fn_1}
\frac{\partial f_n}{\partial\mu}(x,\mu) = T_n \frac{\partial f}{\partial\mu}(\fdot,T_n^*\mu)(x).
\end{equation}
\end{lemma}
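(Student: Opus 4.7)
The plan: Since $T_n^*\mu = \sum_{i=0}^{N_n}\mu(\psi^n_i)\,\delta_{x^n_i}$ is supported in the fixed finite set $\{x^n_0,\dots,x^n_{N_n}\}$, the composition $f_n = f\circ T_n^*$ depends on $\mu$ only through the $N_n+1$ bounded continuous functionals $\mu\mapsto\mu(\psi^n_i)$; this is exactly the cylinder form. I would (i) produce an explicit $\tilde f \in C^1(\R^{N_n})$ realising $f_n$ as a $C^1$ cylinder function, (ii) apply the cylinder formula \eqref{eq_C1r_der_cyl_ex} to obtain a version of $\partial f_n/\partial\mu$, and (iii) identify this version with the claimed $T_n\frac{\partial f}{\partial\mu}(\fdot, T_n^*\mu)(x)$ modulo an additive function of $\mu$, which is permissible since the linear functional derivative is only determined up to such terms.

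For (i), since $\sum_{i=0}^{N_n}\psi^n_i\equiv 1$ the coordinate $\mu(\psi^n_{N_n})=1-\sum_{i<N_n}\mu(\psi^n_i)$ is redundant, so I eliminate it. Define
\[
\sigma(a) := \sum_{i<N_n} a_i\,\delta_{x^n_i} + \Big(1-\sum_{i<N_n} a_i\Big)\delta_{x^n_{N_n}},
\]
which lies in $\Pcal_p$ exactly when $a$ belongs to the closed simplex $\Delta := \{a \in \R_+^{N_n} : \sum_i a_i \le 1\}$, and set $\hat f(a) := f(\sigma(a))$ on $\Delta$. The map $a\mapsto\sigma(a)$ is affine, so the segment from $\sigma(a)$ to $\sigma(a')$ equals $\sigma$ applied to the segment in $\Delta$. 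Combining this with the identity $\sigma(a')-\sigma(a) = \sum_{i<N_n}(a'_i - a_i)(\delta_{x^n_i}-\delta_{x^n_{N_n}})$, the FTC property \eqref{eq_C1r_FTC} of $f$ gives after a direct computation
\[
\partial_i \hat f(a) = \tfrac{\partial f}{\partial\mu}(x^n_i,\sigma(a)) - \tfrac{\partial f}{\partial\mu}(x^n_{N_n},\sigma(a)), \qquad i<N_n,
\]
with continuous dependence on $a$, so $\hat f\in C^1(\Delta)$. A standard $C^1$-extension result then produces $\tilde f \in C^1(\R^{N_n})$ with $\tilde f|_\Delta = \hat f$. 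Because $\sigma(\mu(\psi^n_0),\dots,\mu(\psi^n_{N_n-1})) = T_n^*\mu$, this yields $f_n(\mu)=\tilde f(\mu(\psi^n_0),\dots,\mu(\psi^n_{N_n-1}))$, which is the desired cylinder form.

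For (ii)--(iii), the cylinder derivative formula \eqref{eq_C1r_der_cyl_ex} gives
\[
\frac{\partial f_n}{\partial\mu}(x,\mu) = \sum_{i<N_n}\Big[\tfrac{\partial f}{\partial\mu}(x^n_i, T_n^*\mu) - \tfrac{\partial f}{\partial\mu}(x^n_{N_n}, T_n^*\mu)\Big]\psi^n_i(x).
\]
Using $\sum_{i<N_n}\psi^n_i(x) = 1 - \psi^n_{N_n}(x)$ and regrouping, the right-hand side rearranges to $T_n\frac{\partial f}{\partial\mu}(\fdot, T_n^*\mu)(x) - \frac{\partial f}{\partial\mu}(x^n_{N_n}, T_n^*\mu)$. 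The second summand depends on $\mu$ only, hence $T_n\frac{\partial f}{\partial\mu}(\fdot, T_n^*\mu)(x)$ is itself a valid version of the derivative, establishing \eqref{eq_L_C1r_der_fn_1}. The main technical step is the $C^1$-extension of $\hat f$ from the closed simplex $\Delta$ to $\R^{N_n}$, required by the global definition of cylinder function; this is the only non-routine ingredient but is handled by a standard extension result.
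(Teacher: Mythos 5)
Your proposal is correct and follows essentially the same route as the paper: realise $f_n$ as a cylinder function built from the finitely supported measure $T_n^*\mu$, derive the fundamental theorem of calculus for the induced function on the simplex from that of $f$, invoke the Whitney-type $C^1$ extension, and read off the derivative from the cylinder formula. The only (harmless) cosmetic differences are that you eliminate the redundant coordinate and recover \eqref{eq_L_C1r_der_fn_1} modulo an additive function of $\mu$, whereas the paper keeps all $N_n+1$ coordinates and identifies the derivative directly from the rewritten FTC identity.
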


\begin{proof}
We first show that $f_n$ is a $C^1$ cylinder function. To this end, write $f_n(\mu) = \tilde f(\mu(\psi^n_0),\ldots,\mu(\psi^n_{N_n}))$, where we define
\begin{equation}\label{eq_L_C1r_der_fn_ftilde}
\tilde f(p) = f(p_0\delta_{x^n_0} + \ldots + p_{N_n}\delta_{x^n_{N_n}})
\end{equation}
for all $p$ in the standard $N_n$-simplex $\Delta^{N_n}$ in $\R^{N_n+1}$ given by
\begin{equation}
\Delta^{N_n} = \{(p_0,\ldots,p_{N_n}) \in [0,1]^{N_n+1}\colon p_0+\cdots+p_{N_n}=1\}.
\end{equation}
We now argue that $\tilde f$ satisfies a fundamental theorem of calculus. Pick any $p,q\in\Delta^{N_n}$. Writing $\nu=p_0\delta_{x^n_0} + \ldots + p_{N_n}\delta_{x^n_{N_n}}$ and $\eta=q_0\delta_{x^n_0} + \ldots + q_{N_n}\delta_{x^n_{N_n}}$, and using that $f$ satisfies the fundamental theorem of calculus \eqref{eq_C1r_FTC} by assumption, we get
\begin{align*}
\tilde f(q)-\tilde f(p) &= f(\eta) - f(\nu) \\
&= \int_0^1 \int_{\R^d} \frac{\partial f}{\partial\mu}(x,t\eta + (1-t)\nu)(\eta-\nu)(\dx)\dt \label{eq_L_C1r_der_fn_2} \\
&= \int_0^1 \sum_{i=0}^{N_n}  \frac{\partial f}{\partial\mu}(x^n_i,t\eta + (1-t)\nu)(q_i-p_i)\dt \\
&= \int_0^1 \sum_{i=0}^{N_n} \partial_i\tilde f(tq+(1-t)p)(q_i-p_i)\dt,
\end{align*}
where we define $\partial_i\tilde f(p)=\frac{\partial f}{\partial\mu}(x^n_i,p_0\delta_{x^n_0}+\cdots+p_{N_n}\delta_{x^n_{N_n}})$. Since $f$ belongs to $C^1(\Pcal_p)$, the functions $\partial_i\tilde f$ are continuous on $\Delta^{N_n}$.
The above implies that $\tilde f$ is $C^1$ on $\Delta^{N_n}$ is the sense that the tangential derivatives exist and are uniformly continuous on the relative interior of $\Delta^{N_n}$. Using Whitney's extension theorem, see e.g.\ \citep[Appendix, Corollary~6.3]{MR838085}, we deduce that $\tilde f$ can be extended to a $C^1$ function on all of $\R^{N_n+1}$.
This confirms that $f_n$ is a $C^1$ cylinder function. To verify \eqref{eq_L_C1r_der_fn_1}, it now suffices to note that $\partial_i\tilde f(\mu(\psi^n_0),\ldots,\mu(\psi^n_{N_n}))=\frac{\partial f}{\partial\mu}(x^n_i,T_n^*\mu)$ and apply formula \eqref{eq_C1r_der_cyl_ex}.
\end{proof}

\begin{proof}[Proof of Theorem~\ref{T_approx_cyl}]
Take $f_n(\mu)=f(T_n^*\mu)$, which are $C^1$ cylinder functions due to Lemma~\ref{L_C1r_der_fn}. We need to verify \eqref{T_approx_cyl_1} and \eqref{T_approx_cyl_2}.

First, continuity of $f$ and Lemma~\ref{L_Tn_prop}\ref{L_Tn_prop_4} yield $f_n(\mu)=f(T_n^*\mu)\to f(\mu)$. Next, to simplify notation, write $g(x,\mu)=\frac{\partial f}{\partial\mu}(x,\mu)$ and $g_n(x,\mu)=\frac{\partial f_n}{\partial\mu}(x,\mu)$. Then for each fixed $\mu\in\Pcal_p$, Lemma~\ref{L_Tn_prop}\ref{L_Tn_prop_4} and joint continuity of $g$ imply that $g(\fdot,T_n^*\mu)\to g(\fdot,\mu)$ locally uniformly. Therefore, by the expression \eqref{eq_L_C1r_der_fn_1} and Lemma~\ref{L_Tn_prop}\ref{L_Tn_prop_3}, $g_n(x,\mu)=T_ng(\fdot,T_n^*\mu)(x)\to g(x,\mu)$ for every $x\in {\R^d}$. We have proved \eqref{T_approx_cyl_1}.

To prove \eqref{T_approx_cyl_2}, let $K \subset \Pcal_p$ be an arbitrary compact set. Lemma~\ref{L_Tn_prop}\ref{L_Tn_prop_a1} gives a possibly larger compact set $K'$ such that $T_n^*\mu\in K'$ for all $n$ and all $\mu\in K$. Thus $|f_n(\mu)| = |f(T_n^*\mu)| \le \max_{K'} |f| < \infty$ for $\mu \in K$. Moreover, since $f$ belongs to $C^1(\Pcal_p)$, it satisfies the locally uniform $p$-growth bound
\[
\left| \frac{\partial f}{\partial\mu}(x,T_n^*\mu) \right| \le c_{K'} (1+|x|^p)
\]
for some constant $c_{K'}$ and all $\mu \in K$ and $x \in {\R^d}$. Combining this with \eqref{eq_L_C1r_der_fn_1}, Lemma~\ref{L_Tn_prop}\ref{L_Tn_prop_a2} (with $h(x)=|x|$), and the fact that $T_n$ preserves growth bounds, we obtain
\[
\left| \frac{\partial f_n}{\partial\mu}(x,\mu) \right|
= \left| T_n \frac{\partial f}{\partial\mu}(\fdot,T_n^*\mu)(x) \right|
\le T_n \left| \frac{\partial f}{\partial\mu}(\fdot,T_n^*\mu) \right|(x)
\le c_{K'} (1+|x|^p)
\]
for all $\mu \in K$, $x \in {\R^d}$, $n\in\Nb$. Setting $c_K = c_{K'} \vee \max_{K'} |f|$ gives \eqref{T_approx_cyl_2}.
\end{proof}

\subsection{Second order derivatives}

\begin{definition}\label{D_C2r}
Let $p \in [1,\infty) \cup \{0\}$. A function $f\in C^1(\Pcal_p)$ is said to belong to $C^2(\Pcal_p)$ if there is a continuous function $(x,y,\mu)\mapsto\frac{\partial^2 f}{\partial\mu^2}(x,y,\mu)$ from ${\R^d}\times {\R^d}\times\Pcal_p$ to $\R$, called (a version of) the \emph{second derivative} of $f$, such that $\frac{\partial^2 f}{\partial\mu^2}$ is symmetric in its first two arguments and  the following properties hold.
\begin{itemize}
\item \textbf{locally uniform $p$-growth:} for every compact set $K\subset\Pcal_p$, there is a constant $c_K$ such that for all $x,y\in {\R^d}$ and $\mu\in K$,
\begin{equation}\label{eq_C2r_p_growth}
\left| \frac{\partial^2 f}{\partial\mu^2}(x,y,\mu) \right| \le c_K (1+|x|^p+|y|^p),
\end{equation}
\item \textbf{fundamental theorem of calculus:} for every $\mu,\nu\in\Pcal_p$,
\begin{equation}\label{eq_C2r_FTC}
\begin{aligned}
&f(\nu) - f(\mu) - \int_{\R^d} \frac{\partial f}{\partial\mu}(x,\mu)(\nu-\mu)(\dx) \\
&\quad=\int_0^1 \int_0^t \int_{{\R^d}\times {\R^d}} \frac{\partial^2 f}{\partial\mu^2}(x,y,s\nu + (1-s)\mu)(\nu-\mu)^{\otimes 2}(\dx,\dy)\ds \dt.
\end{aligned}
\end{equation}
Here $(\nu-\mu)^{\otimes 2}$ is shorthand for the product measure $(\nu-\mu) \otimes (\nu-\mu)$ on $\R^d \times \R^d$.
\end{itemize}
\end{definition}

\begin{remark}
Observe that the imposed symmetry permits to avoid unnecessary redundancies. One can indeed see that adding a term of the form
$(x,y,\mu)\mapsto c(x,\mu)-c(y,\mu)$ to a version of the second derivative of $f$ does not change the value of
the integral term on the right hand side of \eqref{eq_C2r_FTC}.
Moreover note that  if  $(x,y,\mu)\mapsto\frac{\partial^2 f}{\partial\mu^2}(x,y,\mu)$ is a version of the second derivative of $f$, then the same holds for $(x,y,\mu)\mapsto\frac{\partial^2 f}{\partial\mu^2}(x,y,\mu)+a(x,\mu)+a(y,\mu)$ for each continuous map  $(x,\mu)\mapsto a(x,\mu)$.
 Modulo additive terms of this form, the second derivative is uniquely determined. For more details on this property see Appendix~\ref{secC}.
\end{remark}

\begin{remark}
If $q < p$, then $C^2(\Pcal_q) \subset C^2(\Pcal_p)$ in the sense described in Remark~\ref{R_C1Pq_in_C1Pp}. The reasoning for verifying this is the same.
\end{remark}

Consider a function $f$ of the form \eqref{eq_C1_cylinder}, now with $\tilde f\in C^2(\R^n)$. We refer to such a function as a \emph{$C^2$ cylinder function}. A version of its first derivative is given by \eqref{eq_C1r_der_cyl_ex}, and a version of its second derivative is
\begin{equation}\label{eq_C2r_2n_der_cyl_ex}
\frac{\partial^2 f}{\partial\mu^2}(x,y,\mu) = \sum_{i,j=1}^n \partial^2_{ij}\tilde f(\mu(\varphi_1),\ldots,\mu(\varphi_n)) \varphi_i(x)\varphi_j(y).
\end{equation}
Any $C^2$ cylinder function belongs to $C^2(\Pcal_p)$ for every $p$. The following result extends Theorem~\ref{T_approx_cyl} in the case of $C^2$ functions.

\begin{theorem}\label{T_approx_cyl_C2}
Let $f \in C^2(\Pcal_p)$ for some $p \in [1,\infty) \cup \{0\}$. Then there exist $C^2$ cylinder functions $f_n$ such that one has the pointwise convergence \eqref{T_approx_cyl_1} as well as
\begin{equation}\label{T_approx_cyl_1_C2}
\frac{\partial^2 f_n}{\partial\mu^2}(x,y,\mu) \to \frac{\partial^2 f}{\partial\mu^2}(x,y,\mu)
\end{equation}
for all $\mu \in \Pcal_p$, $x,y \in {\R^d}$, and for every compact set $K\subset\Pcal_p$ there is constant $c_K$ such that \eqref{T_approx_cyl_2} holds along with
\begin{equation}\label{T_approx_cyl_2_C2}
\left|  \frac{\partial^2 f_n}{\partial\mu^2}(x,y,\mu) \right| \le c_K (1 + |x|^p + |y|^p)
\end{equation}
for all $\mu\in K$, $x,y \in {\R^d}$, $n\in\N$.
\end{theorem}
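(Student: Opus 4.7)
The plan is to follow the template of Theorem~\ref{T_approx_cyl} with the same approximating sequence $f_n(\mu):=f(T_n^*\mu)$, replacing the first-order FTC \eqref{eq_C1r_FTC} at the key step by the second-order FTC \eqref{eq_C2r_FTC}. Since $f\in C^2(\Pcal_p)\subset C^1(\Pcal_p)$, Lemma~\ref{L_C1r_der_fn} already shows that $f_n$ is a $C^1$ cylinder function with the expected first derivative, so the pointwise convergences in \eqref{T_approx_cyl_1} and the bound on $|f_n|$ in \eqref{T_approx_cyl_2} come for free from Theorem~\ref{T_approx_cyl}. Only the $C^2$ cylinder structure, the explicit form of $\frac{\partial^2 f_n}{\partial\mu^2}$, and the convergence/bounds in \eqref{T_approx_cyl_1_C2}--\eqref{T_approx_cyl_2_C2} remain.

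For the $C^2$ structure, I would write $f_n(\mu)=\tilde f(\mu(\psi^n_0),\ldots,\mu(\psi^n_{N_n}))$ on the compact convex set $\Delta$ of \eqref{eq_Delta_cpct_convex_NEW}, with $\tilde f$ as in \eqref{eq_L_C1r_der_fn_ftilde}. Applying \eqref{eq_C2r_FTC} to $f$ at $T_n^*\mu$ and $T_n^*\nu$, and using
$$(T_n^*\nu-T_n^*\mu)^{\otimes 2}(\dx,\dy)=\sum_{i,j=0}^{N_n}(q_i-p_i)(q_j-p_j)\,\delta_{(x^n_i,x^n_j)}(\dx,\dy),$$
with $p_i=\mu(\psi^n_i)$ and $q_i=\nu(\psi^n_i)$, together with the linearity $T_n^*(s\nu+(1-s)\mu)=sT_n^*\nu+(1-s)T_n^*\mu$, produces a second-order Taylor-type identity for $\tilde f$ on $\Delta$ with candidate second partials
$$\partial^2_{ij}\tilde f(p)=\frac{\partial^2 f}{\partial\mu^2}\bigl(x^n_i,x^n_j,\sum_k p_k\delta_{x^n_k}\bigr).$$
These are continuous in $p\in\Delta$ and symmetric in $i,j$ by the corresponding properties of $\frac{\partial^2 f}{\partial\mu^2}$, so Whitney's extension theorem for $C^2$ functions extends $\tilde f$ to a $C^2$ function on $\R^{N_n+1}$, confirming that $f_n$ is a $C^2$ cylinder function. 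Rewriting the same identity via the adjoint relation \eqref{eq_Tn_adjoint} applied in each variable identifies
$$\frac{\partial^2 f_n}{\partial\mu^2}(x,y,\mu)=\sum_{i,j=0}^{N_n}\frac{\partial^2 f}{\partial\mu^2}(x^n_i,x^n_j,T_n^*\mu)\,\psi^n_i(x)\psi^n_j(y)=(T_n\otimes T_n)\frac{\partial^2 f}{\partial\mu^2}(\fdot,\fdot,T_n^*\mu)(x,y),$$
which is jointly continuous and symmetric in $(x,y)$.

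For the pointwise convergence \eqref{T_approx_cyl_1_C2}, fix $(x,y,\mu)$: Lemma~\ref{L_Tn_prop}\ref{L_Tn_prop_4} and joint continuity of $\frac{\partial^2 f}{\partial\mu^2}$ give $\frac{\partial^2 f}{\partial\mu^2}(\fdot,\fdot,T_n^*\mu)\to\frac{\partial^2 f}{\partial\mu^2}(\fdot,\fdot,\mu)$ locally uniformly on $\R^d\times\R^d$. Applying $T_n$ in the first variable via Lemma~\ref{L_Tn_prop}\ref{L_Tn_prop_3} yields locally uniform convergence in $y$, and a second application of the same lemma in the $y$-variable delivers the claimed limit. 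For \eqref{T_approx_cyl_2_C2}, given a compact $K\subset\Pcal_p$, Lemma~\ref{L_Tn_prop}\ref{L_Tn_prop_a1} produces a compact $K'\supset K$ with $T_n^*\mu\in K'$ for all $\mu\in K$, $n\in\N$; the locally uniform $p$-growth \eqref{eq_C2r_p_growth} then yields $|\frac{\partial^2 f}{\partial\mu^2}(x,y,T_n^*\mu)|\le c_{K'}(1+|x|^p+|y|^p)$. Finally, applying Lemma~\ref{L_Tn_prop}\ref{L_Tn_prop_a2} with $h(t)=|t|$ in each variable separately, combined with the $p$-growth preservation of $T_n$ in \eqref{eq_Tn_growth_pres}, transports this bound to $\frac{\partial^2 f_n}{\partial\mu^2}$.

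The main obstacle is the tensorial bookkeeping for the operator $T_n\otimes T_n$: both the convergence argument and the growth bound require iterating lemmas from Lemma~\ref{L_Tn_prop} variable-by-variable, and one must verify that the intermediate function $T_n\frac{\partial^2 f}{\partial\mu^2}(\fdot,\fdot,T_n^*\mu)$ with $T_n$ acting only in the first slot retains enough joint continuity and $p$-growth in the second slot to allow a second application. Beyond this, the structure is entirely parallel to the first-order case.
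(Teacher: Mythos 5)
Your proposal is correct and follows essentially the same route as the paper: the same approximants $f_n(\mu)=f(T_n^*\mu)$, the second-order fundamental theorem of calculus evaluated at $T_n^*\mu$, $T_n^*\nu$ to obtain the $C^2$ cylinder structure via Whitney extension, the identification $\frac{\partial^2 f_n}{\partial\mu^2}(\fdot,\fdot,\mu)=T_n^{\otimes 2}\frac{\partial^2 f}{\partial\mu^2}(\fdot,\fdot,T_n^*\mu)$, and then two-variable analogues of Lemma~\ref{L_Tn_prop} for the convergence and growth bounds. The only cosmetic difference is that you iterate the one-variable lemmas slot by slot where the paper invokes properties of $T_n^{\otimes 2}$ directly, and the "tensorial bookkeeping" you flag is routine.
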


To prove this result we introduce $T_n^{\otimes 2}$ acting on functions $(x,y)\mapsto\varphi(x,y)$ of two variables by
\[
T_n^{\otimes 2}\varphi(x,y) = \sum_{i,j=0}^{N_n} \varphi(x^n_i,x^n_j)\psi^n_i(x)\psi^n_j(y).
\]
Observe that $T_n^{\otimes 2}\varphi$ is always continuous. Moreover, taking $\varphi(x,y)=h(|x|,|y|)$ for any nonnegative function $h$ increasing in both of its arguments, by applying \eqref{eq_Tn_growth_pres} twice, we obtain
\begin{align*}
T_n^{\otimes 2}\varphi (x,y)
=T_n\left(y\mapsto T_n h(|\cdot|,|y|)(x)\right)(y)
\le T_n h(|x|,|\cdot|)(y)
\le \varphi(x,y).
\end{align*}
In particular, if $\varphi$ satisfies a $p$-growth bound of the form $|\varphi(x,y)|\le c(1+|x|^p+|y|^p)$ on $\R^d\times\R^d$, it follows that $T_n^{\otimes 2}\varphi$ satisfies the same bound.

\begin{lemma}\label{L_Tn2_prop}
The operators $T_n^{\otimes 2}$ satisfy the following basic properties.
\begin{enumerate}
\item\label{L_Tn2_prop_0} if $|x|\vee|y|\le n$, then 
$$|T_n^{\otimes 2}\varphi(x,y)| \le \sup\{|\varphi(u,z)|\colon (u,z)\in\R^d\times\R^d, |(u,z)-(x,y)|<\sqrt{2}/n\},$$
\item\label{L_Tn2_prop_1} if $\varphi$ is continuous at $(x,y)\in\R^d\times\R^d$, then $T_n^{\otimes 2}\varphi(x,y)\to\varphi(x,y)$,
\item\label{L_Tn2_prop_2} if $\varphi$ is continuous everywhere, then $T_n^{\otimes 2}\varphi\to\varphi$ locally uniformly,
\item\label{L_Tn2_prop_3} if $\varphi_n\to\varphi$ locally uniformly and $\varphi$ is continuous at $(x,y)\in\R^d\times\R^d$, then $T_n^{\otimes 2}\varphi_n(x,y)\to\varphi(x,y)$.
\end{enumerate}
\end{lemma}

\begin{proof}
\ref{L_Tn2_prop_0}: If $x,y\in B_n$ then $x\in U^n_i$ and $y\in U^n_j$ for some (possibly many) $i,j\ne0$. These sets all have diameter at most $1/n$, so
\begin{align*}
|T_n^{\otimes 2}\varphi(x,y)|
&\le\sum_{i,j=1}^{N_n}|\varphi(x^n_i,x^n_j)|\psi^n_i(x)\psi^n_j(y)\\
&\le\sup\{|\varphi(u,z)|\colon u,z\in {\R^d},|u-x|\vee|z-y|<1/n\}.
\end{align*}

\ref{L_Tn2_prop_1}: Let $\omega_{(x,y)}(\delta)$ be an increasing modulus of continuity for $\varphi$ at $(x,y)$. Then $|\varphi(x^n_i,x^n_j)-\varphi(x,y)|\le\omega_{(x,y)}(|(x^n_i,x^n_j)-(x,y)|)\le\omega_{(x,y)}(\sqrt{2}n^{-1})$ whenever $x\in U^n_i$ and $y\in U^n_j$ for $i,j\ne0$. Because $x,y\notin U^n_0$ for all large $n$, it follows that
\[
|T_n^{\otimes 2}\varphi(x,y) - \varphi(x,y)| \le \sum_{i,j=1}^{N_n} |\varphi(x^n_i,x^n_j) - \varphi(x,y)|\psi^n_i(x)\psi^n_j(y) \le \omega_{(x,y)}(\sqrt{2}n^{-1}) \to 0.
\]

\ref{L_Tn2_prop_2}: Fix a compact set $J\subset\R^d\times\R^d$ and let $\omega(\delta)$ be a uniform modulus of continuity for $\varphi$ on $J$. Because it holds for all large $n$ that $x,y\not\in U^n_0$ for all $(x,y)\in J$, the same computation as above gives $|T_n^{\otimes 2}\varphi(x,y) - \varphi(x,y)| \le \omega(\sqrt{2}n^{-1})$ for all $(x,y)\in J$.

\ref{L_Tn2_prop_3}: Write $|T_n^{\otimes 2}\varphi_n(x,y)-\varphi(x,y)| \le |T_n^{\otimes 2}(\varphi_n-\varphi)(x,y)| + |T_n^{\otimes 2}\varphi(x,y)-\varphi(x,y)|$, and denote the two terms on the right-hand side by $A_n$ and $B_n$, respectively. We have from \ref{L_Tn2_prop_0} that $A_n\le
\sup\{|\varphi_n-\varphi|(u,z)\colon (u,z)\in\R^d\times\R^d, |(u,z)-(x,y)|<\sqrt{2}/n\}
$ for all large $n$, so that $A_n\to0$. Moreover, thanks to \ref{L_Tn2_prop_1}, $B_n\to0$.
\end{proof}

\begin{lemma}\label{L_C2r_der_fn}
Suppose $f$ belongs to $C^2(\Pcal_p)$ and define $f_n(\mu)=f(T_n^*\mu)$. Then $f_n$ is a $C^2$ cylinder function, a version of its first derivative is given by \eqref{eq_L_C1r_der_fn_1}, and a version of its second derivative is given by
\begin{align}\label{eq_L_C2r_der_fn_2}
\frac{\partial^2f_n}{\partial\mu^2}(x,y,\mu) = T_n^{\otimes 2} \frac{\partial^2f}{\partial\mu^2}(\fdot,\fdot,T_n^*\mu)(x,y).
\end{align}
\end{lemma}

\begin{proof}
We first show that $f_n$ is a $C^2$ cylinder function. To this end, write $f_n(\mu) = \tilde f(\mu(\psi^n_0),\ldots,\mu(\psi^n_{N_n}))$, where we define $\tilde f$ as in \eqref{eq_L_C1r_der_fn_ftilde} on the $N_n$-simplex $\Delta^{N_n}$ in $\R^{N_n+1}$.
We now argue that $\tilde f$ satisfies a fundamental theorem of calculus. Pick any $p,q\in\Delta^{N_n}$. Writing $\nu=p_0\delta_{x^n_0} + \ldots + p_{N_n}\delta_{x^n_{N_n}}$ and $\eta=q_0\delta_{x^n_0} + \ldots + q_{N_n}\delta_{x^n_{N_n}}$, and using that $f$ satisfies the fundamental theorem of calculus \eqref{eq_C2r_FTC} by assumption, we get
\begin{align*}
\tilde f(q)- & \tilde f(p) 
= f(\eta) - f(\nu) \\
=& \int_{\R^d} \frac{\partial f}{\partial\mu}(x,\nu)(\eta-\nu)(\dx)\\
&+\int_0^1\int_0^t \int_{\R^d\times\R^d} \piann{f}{\mu}(x,y,s\eta + (1-s)\nu)(\eta-\nu)^{\otimes 2}(\dx,\dy)\ds\dt \\
=& \sum_{i=0}^{N_n}  \pian{f}{\mu}(x^n_i,\nu)(q_i-p_i)\\
&+\int_0^1\int_0^t \sum_{i,j=0}^{N_n}  \piann{f}{\mu}(x^n_i,x^n_j,s\eta + (1-s)\nu)(q_i-p_i)(q_j-p_j)\ds\dt \\
=& \sum_{i=0}^{N_n} \partial_i\tilde f(p)(q_i-p_i)
+ \int_0^1\int_0^t \sum_{i,j=0}^{N_n} \partial^2_{ij}\tilde f(sq+(1-s)p)(q_i-p_i)(q_j-p_j)\ds\dt\\
=& \sum_{i=0}^{N_n} \partial_i\tilde f(p)(q_i-p_i)
+\frac 1 2 \sum_{i,j=0}^{N_n} \partial^2_{ij}\tilde f(p)(q_i-p_i)(q_j-p_j)\\
&\qquad+ \sum_{i,j=0}^{N_n}\int_0^1\int_0^t ( \partial^2_{ij}\tilde f(sq+(1-s)p)-  \partial^2_{ij}\tilde f(p))(q_i-p_i)(q_j-p_j)\ds\dt,
\end{align*}
where we define $\partial_i\tilde f(p)=\frac{\partial f}{\partial\mu}(x^n_i,p_0\delta_{x^n_0}+\cdots+p_{N_n}\delta_{x^n_{N_n}})$ as in Lemma~\ref{L_C1r_der_fn}, and $\partial^2_{ij}\tilde f(p)=\frac{\partial^2 f}{\partial\mu^2}(x^n_i,x^n_j,p_0\delta_{x^n_0}+\ldots+p_{N_n}\delta_{x^n_{N_n}})$.
Since $f$ belongs to $C^2(\Pcal_p)$, the functions $\partial_i\tilde f$ and $\partial^2_{ij}\tilde f$ are continuous on $\Delta^{N_n}$.
The above implies that $\tilde f$ is $C^2$ on $\Delta^{N_n}$ is the sense that the tangential derivatives exist and are uniformly continuous on the relative interior of $\Delta^{N_n}$. Using Whitney's extension theorem, see e.g.\ \citep[Appendix, Corollary~6.3]{MR838085}, we deduce that $\tilde f$ can be extended to a $C^2$ function on all of $\R^{N_n+1}$.
This confirms that $f_n$ is a $C^2$ cylinder function. 
To verify \eqref{eq_L_C2r_der_fn_2}, it now suffices to note that $\partial^2_{ij}\tilde f(\mu(\psi^n_0),\ldots,\mu(\psi^n_{N_n}))=\frac{\partial^2f}{\partial\mu^2}(x^n_i,x^n_j,T_n^*\mu)$ and apply \eqref{eq_C2r_2n_der_cyl_ex}.
\end{proof}

\begin{proof}[Proof of Theorem~\ref{T_approx_cyl_C2}]
Take $f_n(\mu)=f(T_n^*\mu)$, which are $C^2$ cylinder functions due to Lemma~\ref{L_C2r_der_fn}. Thanks to Theorem~\ref{T_approx_cyl}, only \eqref{T_approx_cyl_1_C2} and \eqref{T_approx_cyl_2_C2} need to be argued. 

To simplify notation, write $g(x,y,\mu)=\frac{\partial^2 f}{\partial\mu^2}(x,y,\mu)$ and $g_n(x,y,\mu)=\frac{\partial^2 f_n}{\partial\mu^2}(x,y,\mu)$. Then for each fixed $\mu\in\Pcal_p$, Lemma~\ref{L_Tn_prop}\ref{L_Tn_prop_4} and joint continuity of $g$ imply that $g(\fdot,\fdot,T_n^*\mu)\to g(\fdot,\fdot,\mu)$ locally uniformly. Therefore, by the expression \eqref{eq_L_C2r_der_fn_2} and Lemma~\ref{L_Tn2_prop}\ref{L_Tn2_prop_3}, $g_n(x,y,\mu)=T_n^{\otimes 2}g(\fdot,\fdot,T_n^*\mu)(x,y)\to g(x,y,\mu)$ for every $(x,y)\in\R^d\times\R^d$. We have proved \eqref{T_approx_cyl_1_C2}.

To prove \eqref{T_approx_cyl_2_C2}, let $K \subset \Pcal_p$ be an arbitrary compact set. Lemma~\ref{L_Tn_prop}\ref{L_Tn_prop_a1} gives a possibly larger compact set $K'$ such that $T_n^*\mu\in K'$ for all $n$ and all $\mu\in K$. Since $f$ belongs to $C^2(\Pcal_p)$, it satisfies the locally uniform $p$-growth bound
\[
\left| \frac{\partial^2 f}{\partial\mu^2}(x,y,T_n^*\mu) \right| \le c_{K'} (1+|x|^p+|y|^p)
\]
for some constant $c_{K'}$ and all $\mu \in K$ and $x,y \in {\R^d}$.
Combining this with \eqref{eq_L_C2r_der_fn_2}, the fact that $|T_n^{\otimes 2}\varphi|\le T_n^{\otimes 2}|\varphi|$ due to the triangle inequality, and the fact that $T_n^{\otimes 2}$ preserves growth bounds, we obtain
\begin{align*}
\left|  \frac{\partial^2 f_n}{\partial\mu^2}(x,y,\mu) \right| 
&=\left|T_n^{\otimes 2} \frac{\partial^2f}{\partial\mu^2}(\fdot,\fdot,T_n^*\mu)(x,y)\right|\\
&\le T_n^{\otimes 2} \left|\frac{\partial^2f}{\partial\mu^2}(\fdot,\fdot,T_n^*\mu)\right|(x,y)
\le c_{K'}(1 + |x|^p + |y|^p)
\end{align*}
for all $\mu \in K$, $x,y \in {\R^d}$, $n\in\Nb$, which gives \eqref{T_approx_cyl_2_C2}.
\end{proof}

\section{It\^o's formula} \label{S_Ito}
\label{sec:itos-formula}

We now establish the following It\^o formula, which is a crucial tool in this paper. Most importantly, it is used to prove the viscosity sub- and super-solution properties in Sections~\ref{sec:visc-subs-prop} and~\ref{sec:visc-supers-prop}.

\begin{theorem}\label{T_Ito}
Let $(\xi,\rho)$ be a \emph{weak solution} of \eqref{eq_MVM_SDE}, where $\xi$ takes values in $\Pcal_p$ for some fixed $p \in [1,\infty) \cup \{0\}$.
Let $q \in [1,p] \cup \{0\}$ and assume that, $\P\otimes \dt$-a.e., 
\begin{equation}\label{eq_Ito_assumption}
\int_0^t \left( \int_{\R^d} (1 + |x|^q) |\rho_s(x) - \xi_s(\rho_s)| \xi_s(\dx) \right)^2 \ds < \infty.
\end{equation}
Then, for every $f$ in $C^2(\Pcal_q)$ we have the It\^o formula
\begin{equation}\label{eq:ito_general_NEW}
\begin{aligned}
f(\xi_t) &= f(\xi_0) + \int_0^t \int_{\R^d} \frac{\partial f}{\partial \mu}(x,\xi_s)\sigma_s(\dx) \d W_s \\
&\quad + \frac12 \int_0^t \int_{{\R^d}\times {\R^d}} \frac{\partial^2 f}{\partial \mu^2}(x,y,\xi_s) \sigma_s(\dx)\sigma_s(\dy) \ds,
\end{aligned}
\end{equation}
where we write $\sigma_s(\dx)=(\rho_s(x)-\xi_s(\rho_s))\xi_s(\dx)$.
\end{theorem}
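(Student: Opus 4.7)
The strategy is to reduce to the case of $C^2$ cylinder functions via Theorem~\ref{T_approx_cyl_C2}, where the claim follows from the classical finite-dimensional It\^o formula, and then pass to the limit using dominated convergence controlled by the integrability hypothesis~\eqref{eq_Ito_assumption}. For the cylinder case, suppose $f(\mu)=\tilde f(\mu(\varphi_1),\ldots,\mu(\varphi_n))$ with $\tilde f\in C^2(\R^n)$ and $\varphi_1,\ldots,\varphi_n\in C_b$. By \eqref{eq_MVM_SDE}, each $\xi_t(\varphi_i)$ is a real semimartingale with $\d\xi_t(\varphi_i)=\int\varphi_i(x)\sigma_t(\dx)\,\d W_t$ and cross-variations $\d\langle\xi(\varphi_i),\xi(\varphi_j)\rangle_t=\big(\int\varphi_i(x)\sigma_t(\dx)\big)\big(\int\varphi_j(y)\sigma_t(\dy)\big)\,\dt$. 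Applying classical It\^o to $\tilde f(\xi_t(\varphi_1),\ldots,\xi_t(\varphi_n))$ and inserting the explicit formulas~\eqref{eq_C1r_der_cyl_ex},~\eqref{eq_C2r_2n_der_cyl_ex} for the first and second linear functional derivatives directly produces~\eqref{eq:ito_general_NEW}.

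For general $f\in C^2(\Pcal_q)$, I would first localise using Remark~\ref{rem:MVMCompact}\ref{page:mvm_localise_compact}: choose stopping times $\tau_k\uparrow\infty$ and compact sets $K_k\subset\Pcal_p$ such that $\xi_s\in K_k$ for every $s\le t\wedge\tau_k$. Since $p\ge q$, each $K_k$ is also compact in $\Pcal_q$. Then apply Theorem~\ref{T_approx_cyl_C2} to obtain $C^2$ cylinder functions $f_n$ satisfying the pointwise convergences~\eqref{T_approx_cyl_1},~\eqref{T_approx_cyl_1_C2} and the locally uniform $q$-growth bounds~\eqref{T_approx_cyl_2},~\eqref{T_approx_cyl_2_C2} on $K_k$. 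The cylinder It\^o formula holds for each $f_n$ at time $t\wedge\tau_k$; the remaining task is to pass to the limit $n\to\infty$ and then $k\to\infty$.

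With $\sigma_s(\dx)=(\rho_s(x)-\xi_s(\rho_s))\xi_s(\dx)$, the uniform bound $|\partial f_n/\partial\mu(x,\xi_s)|\le c_{K_k}(1+|x|^q)$, the finiteness of $\int(1+|x|^q)|\sigma_s|(\dx)$ (a consequence of~\eqref{eq_Ito_assumption}), and the pointwise convergence in Theorem~\ref{T_approx_cyl_C2} together yield, by ordinary dominated convergence on $\R^d$, the pointwise $(s,\omega)$-convergence of the stochastic integrands to $\int(\partial f/\partial\mu)(x,\xi_s)\sigma_s(\dx)$. These integrands are dominated by $c_{K_k}\int(1+|x|^q)|\sigma_s|(\dx)$, whose square integrates to a $\P$-a.s.\ finite quantity on $[0,t\wedge\tau_k]$ by~\eqref{eq_Ito_assumption}; the dominated convergence theorem for stochastic integrals thus gives convergence of the martingale term. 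The drift term is handled analogously using~\eqref{T_approx_cyl_2_C2} together with the pointwise inequality $1+|x|^q+|y|^q \le (1+|x|^q)(1+|y|^q)$, which yields
\[\int_{\R^d\times\R^d}\Big|\tfrac{\partial^2 f_n}{\partial\mu^2}(x,y,\xi_s)\Big|\,|\sigma_s|(\dx)|\sigma_s|(\dy)\le c_{K_k}\Big(\!\int_{\R^d}(1+|x|^q)|\sigma_s|(\dx)\!\Big)^{\!2},\]
a quantity integrable in $s$ on $[0,t\wedge\tau_k]$ by~\eqref{eq_Ito_assumption}.

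The cylinder case is standard and the approximation is supplied by Theorem~\ref{T_approx_cyl_C2}; the delicate point is the interaction of the $q$-growth estimates with the signed measure $\sigma_s$ under the stochastic integral, where one needs both pointwise convergence of the integrand and $L^2$-in-time domination. This is precisely where assumption~\eqref{eq_Ito_assumption} must be invoked, simultaneously providing the $L^2$-domination needed for the dominated convergence theorem for It\^o integrals and the finiteness of the Lebesgue integral in the drift term.
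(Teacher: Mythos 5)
Your proposal is correct and follows essentially the same route as the paper: establish the formula for $C^2$ cylinder functions via the classical It\^o formula, then pass to general $f\in C^2(\Pcal_q)$ using the approximation of Theorem~\ref{T_approx_cyl_C2}, localisation of $\xi$ in compact sets, and dominated convergence controlled by \eqref{eq_Ito_assumption} (the product bound $1+|x|^q+|y|^q\le(1+|x|^q)(1+|y|^q)$ for the drift term is exactly the paper's estimate). The only cosmetic difference is that the paper localises further so that \eqref{eq_Ito_assumption} holds in expectation and concludes via the It\^o isometry, whereas you invoke the pathwise dominated convergence theorem for stochastic integrals; both are valid.
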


\begin{remark}
Note that \eqref{eq_Ito_assumption} is the same as condition \eqref{eqn3}. A sufficient condition for it to hold is given in Lemma~\ref{rem1}.
\end{remark}

\begin{remark}
The formula \eqref{eq:ito_general_NEW} cannot easily be expressed in terms of the Lions derivative. Indeed, the second Lions derivative of $f$ coincides with $\nabla_x \nabla_y \frac{\partial^2 f}{\partial\mu^2}(x,y,\mu)$. Using this object to express the last term of \eqref{eq:ito_general_NEW} would require us to first undo the two gradient operations. 
\end{remark}

The proof of Theorem~\ref{T_Ito} proceeds by first proving the result for $C^2$ cylinder functions and then for more general functions by an approximation argument. A similar strategy was used by \cite{guo_pha_wei_20} in the context of McKean--Vlasov equations. The first step is straightforward and only requires real-valued It\^o calculus. The approximation argument is slightly more delicate, and builds on Theorem~\ref{T_approx_cyl_C2}. We begin with the first step.

\begin{lemma}\label{L_Ito_D}
Let $(\xi,\rho)$ be as in Theorem~\ref{T_Ito}. Then It\^o's formula \eqref{eq:ito_general_NEW} holds for all $C^2$ cylinder functions.
\end{lemma}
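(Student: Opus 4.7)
The strategy is to reduce to the finite-dimensional It\^o formula by exploiting the cylinder structure. Write a $C^2$ cylinder function as $f(\mu)=\tilde f(\mu(\varphi_1),\ldots,\mu(\varphi_n))$ with $\tilde f\in C^2(\R^n)$ and $\varphi_1,\ldots,\varphi_n\in C_b(\R^d)$, and set $X^i_t:=\xi_t(\varphi_i)$. The defining equation \eqref{eq_MVM_SDE} of a weak solution immediately shows that each $X^i$ is a real-valued continuous semimartingale (in fact a local martingale) with differential
\[
\d X^i_t = \sigma^i_t \, \d W_t, \qquad \sigma^i_t := \Cov_{\xi_t}(\varphi_i,\rho_t) = \int_{\R^d}\varphi_i(x)\,\sigma_t(\d x),
\]
where $\sigma_t(\d x)=(\rho_t(x)-\xi_t(\rho_t))\xi_t(\d x)$ as in the statement of Theorem~\ref{T_Ito}. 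The integrability condition $\int_0^t(\sigma^i_s)^2\d s<\infty$ is exactly the requirement built into Definition~\ref{def:sol_of_SDE} (and it also follows from \eqref{eq_Ito_assumption} via the bound $|\sigma^i_s|\le\|\varphi_i\|_\infty\int|\rho_s(x)-\xi_s(\rho_s)|\xi_s(\d x)$), so the stochastic integrals $\int_0^t \sigma^i_s\,\d W_s$ are well-defined.

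Next I would apply the classical It\^o formula in $\R^n$ to the $C^2$ function $\tilde f$ composed with the continuous semimartingale $X=(X^1,\ldots,X^n)$. Using $\d\langle X^i,X^j\rangle_s=\sigma^i_s\sigma^j_s\,\d s$, this yields
\[
f(\xi_t)=f(\xi_0)+\sum_{i=1}^n\int_0^t\partial_i\tilde f(X_s)\,\sigma^i_s\,\d W_s+\frac12\sum_{i,j=1}^n\int_0^t\partial^2_{ij}\tilde f(X_s)\,\sigma^i_s\sigma^j_s\,\d s.
\]
The remaining task is purely algebraic: substitute the explicit formulas \eqref{eq_C1r_der_cyl_ex} and \eqref{eq_C2r_2n_der_cyl_ex} for $\frac{\partial f}{\partial\mu}$ and $\frac{\partial^2 f}{\partial\mu^2}$ at $\mu=\xi_s$ and use bilinearity together with Fubini to recognise that
\[
\sum_i\partial_i\tilde f(X_s)\,\sigma^i_s=\int_{\R^d}\frac{\partial f}{\partial\mu}(x,\xi_s)\,\sigma_s(\d x)
\]
and
\[
\sum_{i,j}\partial^2_{ij}\tilde f(X_s)\,\sigma^i_s\sigma^j_s=\int_{\R^d\times\R^d}\frac{\partial^2 f}{\partial\mu^2}(x,y,\xi_s)\,\sigma_s(\d x)\sigma_s(\d y),
\]
which produces exactly the right-hand side of \eqref{eq:ito_general_NEW}.

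There is essentially no hard step here; the only points that require a small amount of care are the measurability and integrability needed to pass the deterministic sum over $i,j$ inside the stochastic and Lebesgue integrals (for the $\d W_s$ term this is linearity of the It\^o integral; for the $\d s$ term it is Fubini, justified since $\varphi_i$ is bounded and $s\mapsto\sigma^i_s\sigma^j_s$ is locally integrable). The condition \eqref{eq_Ito_assumption} is not needed at this stage—boundedness of the $\varphi_i$ together with the weak-solution integrability built into Definition~\ref{def:sol_of_SDE} is enough—and is reserved for the approximation argument in the proof of the full Theorem~\ref{T_Ito}, where unbounded derivatives of order up to $q$ must be handled.
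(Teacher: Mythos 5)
Your proposal is correct and follows essentially the same route as the paper: apply the finite-dimensional It\^o formula to $\tilde f$ composed with the continuous local martingales $\xi_t(\varphi_i)$ obtained from \eqref{eq_MVM_SDE}, then identify the sums with the integral expressions via \eqref{eq_C1r_der_cyl_ex} and \eqref{eq_C2r_2n_der_cyl_ex}. Your added observations on integrability (and on not needing \eqref{eq_Ito_assumption} here) are accurate.
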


\begin{proof}
Let $f(\mu) = \tilde f(\mu(\varphi_1),\ldots,\mu(\varphi_n))$ be a $C^2$ cylinder function as in \eqref{eq_C1_cylinder}. Using \eqref{eq_MVM_SDE} and It\^o's formula for real-valued processes we get
\begin{align*}
\d f(\xi_t) &= \sum_{i=1}^n \partial_i \tilde f(\xi_t(\varphi_1),\ldots,\xi_t(\varphi_n)) \Cov_{\xi_t}(\varphi_i,\rho_t) \d W_t \\
&\quad + \frac12 \sum_{i,j=1}^n \partial^2_{ij}\tilde f(\xi_t(\varphi_1),\ldots,\xi_t(\varphi_n)) \Cov_{\xi_t}(\varphi_i,\rho_t) \Cov_{\xi_t}(\varphi_j,\rho_t) \dt \\
&= \int_{\R^d} \sum_{i=1}^n \partial_i \tilde f(\xi_t(\varphi_1),\ldots,\xi_t(\varphi_n)) \varphi_i(x) \sigma_t(\dx) \d W_t \\
&\quad + \frac12 \int_{{\R^d}\times {\R^d}} \sum_{i,j=1}^n \partial^2_{ij}\tilde f(\xi_t(\varphi_1),\ldots,\xi_t(\varphi_n)) \varphi_i(x) \varphi_j(y)\sigma_t(\dx)\sigma_t(\dy) \dt,
\end{align*}
where we write $\sigma_t(\dx)=(\rho_t(x)-\xi_t(\rho_t))\xi_t(\dx)$. In view of the expressions \eqref{eq_C1r_der_cyl_ex} and \eqref{eq_C2r_2n_der_cyl_ex} for the derivatives of $C^2$ cylinder functions, the above expression is precisely \eqref{eq:ito_general_NEW}.
\end{proof}

We now proceed with the second step. Fix $q \in [1,\infty) \cup \{0\}$. We consider triplets $(f,g,H)$ of measurable functions $f\colon\Pcal_q\to\R$, $g\colon {\R^d}\times\Pcal_q\to\R$, $H\colon {\R^d}\times {\R^d}\times\Pcal_q\to\R$ that satisfy the following growth bound: for every compact set $K \subset \Pcal_q$ there is a constant $c_K$ such that
\[
|f(\mu)| \le c_K, \quad | g(x,\mu) | \le c_K (1 + |x|^q), \quad | H(x,y,\mu) | \le c_K (1 + |x|^q + |y|^q)
\]
for all $\mu \in K$, $x,y \in {\R^d}$. We define a notion of convergence for such triplets as follows. We say that $(f_n,g_n,H_n)\to(f,g,H)$ in the sense of \emph{local b.p.~(bounded pointwise) convergence} if the functions $f_n,g_n,H_n$ converge pointwise to $f,g,H$, and the above growth bounds hold uniformly in $n$; that is, for every compact set $K \subset \Pcal_q$ there is a constant $c_K$ such that
\[
|f_n(\mu)| \le c_K, \quad | g_n(x,\mu) | \le c_K (1 + |x|^q), \quad | H_n(x,y,\mu) | \le c_K (1 + |x|^q + |y|^q)
\]
holds for all $\mu \in K$, $x,y \in {\R^d}$, and all $n \in \N$. Given any collection $\Acal$ of such triplets $(f,g,H)$, the \emph{local b.p.~closure} of $\Acal$ is the smallest set that contains $\Acal$ and is closed with respect to local b.p.~convergence. Observe that the notions of local b.p.~convergence and closure depend on the parameter $q$, both through the domain of definition of $f,g,H$, through the exponent in the growth bounds, and through the meaning of compactness in $\Pcal_q$.

\begin{lemma}\label{L_Ito_bpclosure}
Let $p$, $q$, and $(\xi,\rho)$ be as in Theorem~\ref{T_Ito}. Consider a collection $\Acal$ of triplets as above (using the given $q$), and assume that
\begin{equation}\label{eq_L_Ito_bpclosure}
\begin{aligned}
f(\xi_t) &= f(\xi_0) + \int_0^t \int_{\R^d} g(x,\xi_s)\sigma_s(\dx) \d W_s \\
&\quad + \frac12 \int_0^t \int_{{\R^d}\times {\R^d}} H(x,y,\xi_s)\sigma_s(\dx)\sigma_s(\dy) \ds
\end{aligned}
\end{equation}
holds for every $(f,g,H)\in\Acal$, where we write $\sigma_s(\dx)=(\rho_s(x)-\xi_s(\rho_s))\xi_s(\dx)$. Then \eqref{eq_L_Ito_bpclosure} also holds for all $(f,g,H)$ in the local b.p.~closure of $\Acal$.
\end{lemma}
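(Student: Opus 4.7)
The plan is to establish \eqref{eq_L_Ito_bpclosure} for the limit triplet $(f,g,H)$ via a localization-and-dominated-convergence argument. Fix $t\ge 0$ and a sequence $(f_n,g_n,H_n)\in\Acal$ converging locally b.p.\ to $(f,g,H)$. Because $q\le p$, the $\Pcal_p$-continuous trajectories of $\xi$ are also $\Pcal_q$-continuous, so Remark~\ref{rem:MVMCompact}\ref{page:mvm_localise_compact} delivers stopping times $\tau_k\uparrow\infty$ for which $\{\xi_s\colon s\le\tau_k\}$ lies in a compact set $K_k\subset\Pcal_p$, which is also compact in $\Pcal_q$.

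On $\{s\le t\wedge\tau_k\}$, the uniform bounds from local b.p.\ convergence produce a constant $c=c_{K_k}$ such that $|f_n(\xi_s)|,|f(\xi_s)|\le c$, $|g_n(x,\xi_s)|,|g(x,\xi_s)|\le c(1+|x|^q)$, and $|H_n(x,y,\xi_s)|,|H(x,y,\xi_s)|\le c(1+|x|^q+|y|^q)$, all uniformly in $n$. Using $1+|x|^q+|y|^q\le (1+|x|^q)+(1+|y|^q)$ together with the inequality $1\le 1+|y|^q$, one obtains the pathwise bound
\[
\int_{\R^d\times\R^d}(1+|x|^q+|y|^q)|\rho_s(x)-\xi_s(\rho_s)||\rho_s(y)-\xi_s(\rho_s)|\,\xi_s(\d x)\xi_s(\d y)\;\le\;2\,F(s)^2,
\]
where $F(s):=\int_{\R^d}(1+|x|^q)|\rho_s(x)-\xi_s(\rho_s)|\xi_s(\d x)$. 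By assumption~\eqref{eq_Ito_assumption}, $\int_0^t F(s)^2\,\d s<\infty$ almost surely, which supplies an $L^2$-dominating function for the stochastic integrand $s\mapsto\int g_n(x,\xi_s)\sigma_s(\d x)$ and an $L^1$-dominating function for the Lebesgue integrand $s\mapsto\iint H_n(x,y,\xi_s)\sigma_s(\d x)\sigma_s(\d y)$, uniformly in $n$.

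The passage to the limit then consists of three routine ingredients: (i) pointwise convergence in $x$ (resp.\ in $(x,y)$) combined with the dominations above gives $\int g_n(x,\xi_s)\sigma_s(\d x)\to\int g(x,\xi_s)\sigma_s(\d x)$ and similarly for the $H$-integral, for a.e.\ $s$; (ii) the dominated convergence theorem for stochastic integrals (applicable thanks to the $L^2$-bound above) gives convergence of the $\d W_s$-integral up to $t\wedge\tau_k$ in probability, while the standard dominated convergence theorem handles the Lebesgue integral; (iii) $f_n(\xi_{t\wedge\tau_k})\to f(\xi_{t\wedge\tau_k})$ by pointwise convergence of $f_n$. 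Putting these together yields \eqref{eq_L_Ito_bpclosure} for $(f,g,H)$ at the stopped time $t\wedge\tau_k$. Sending $k\to\infty$ is then straightforward from $\tau_k\uparrow\infty$ a.s.\ and the a.s.\ continuity in $t$ of all terms on the right-hand side (the stochastic integral is a continuous process, and the drift is an absolutely continuous function of $t$). The main technical obstacle is the justification of dominated convergence for the stochastic integral; this, however, reduces entirely to the $L^2$-domination secured by \eqref{eq_Ito_assumption} after localization, which is precisely the purpose of introducing \eqref{eq_Ito_assumption} in the hypotheses.
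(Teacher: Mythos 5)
Your proof is correct and follows essentially the same strategy as the paper's: localise so that $\xi$ stays in a compact set (hence the uniform growth constants $c_K$ apply), use \eqref{eq_Ito_assumption} to dominate the integrands, and pass to the limit term by term. The only (immaterial) difference is that you invoke the pathwise dominated convergence theorem for stochastic integrals to get convergence in probability, whereas the paper additionally localises \eqref{eq_Ito_assumption} to have finite expectation and concludes $L^2$-convergence of the stochastic integral via the It\^o isometry.
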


\begin{proof}
It suffices to consider $(f_n,g_n,H_n)\in\Acal$ converging to some $(f,g,H)$ in the local b.p.~sense, and show that \eqref{eq_L_Ito_bpclosure} holds for any fixed $t$. By localisation we may assume that the left-hand side of \eqref{eq_Ito_assumption} is bounded by a constant, and in particular
\begin{equation}\label{eq_Ito_assumption_v2}
\E\left[ \int_0^t \left( \int_{\R^d} (1 + |x|^q) |\rho_s(x) - \xi_s(\rho_s)| \xi_s(\dx) \right)^2 \ds \right] < \infty.
\end{equation}
By further localisation based on Lemma~\ref{L_lin_fcn} and  Remark~\ref{rem:MVMCompact}\ref{page:mvm_localise_compact}, we may additionally assume that $\{\xi_s\colon s\in[0,t]\}$ remains inside some compact set $K\subset\Pcal_p$. Since $q \le p$, $K$ is also a compact subset of $\Pcal_q$.

Clearly $f_n(\xi_t)\to f(\xi_t)$ and $f_n(\xi_0)\to f(\xi_0)$. Next, we claim that
\begin{equation}\label{eq_Ito_proof_2}
\E\left[ \int_0^t \left( \int_{\R^d} (g_n-g)(x,\xi_s)\sigma_s(\dx)\right)^2 \ds \right] \to 0.
\end{equation}
To see this, first observe that $g_n \to g$ pointwise. Moreover, recall that $\sigma_s(\dx) = (\rho_s(x) - \xi_s(\rho_s))\xi_s(\dx)$ and note that
\begin{equation}\label{eq_Ito_proof_3}
| (g_n - g)(x,\xi_s) (\rho_s(x) - \xi_s(\rho_s)) | \le 2 c_K (1 + |x|^q) |\rho_s(x) - \xi_s(\rho_s)|
\end{equation}
since $\xi_s$ remains inside $K$. Due to \eqref{eq_Ito_assumption_v2} we have, with probability one, that
\[
\int_{\R^d} (1 + |x|^q) |\rho_s(x) - \xi_s(\rho_s)| \xi_s(\dx) < \infty
\]
for Lebesgue-a.e.~$s \in [0,t]$, so the dominated convergence theorem gives
\[
\int_{\R^d} (g_n-g)(x,\xi_s)\sigma_s(\dx) \to 0
\]
for all such $s$. Moreover, using again \eqref{eq_Ito_proof_3} we have
\[
\left( \int_{\R^d} (g_n-g)(x,\xi_s)\sigma_s(\dx)\right)^2 \le 4 c_K^2 \left( \int_{\R^d} (1 + |x|^q) |\rho_s(x) - \xi_s(\rho_s)| \xi_s(\dx) \right)^2,
\]
which is $\P \otimes \ds$-integrable thanks to \eqref{eq_Ito_assumption_v2}. One more application of dominated convergence now gives \eqref{eq_Ito_proof_2}. With this in hand, we obtain $\int_0^t \int_{\R^d} g_n(x,\xi_s)\sigma_s(\dx) \d W_s\to\int_0^t \int_{\R^d} g(x,\xi_s)\sigma_s(\dx) \d W_s$ in $L^2(\P)$, by use of the It\^o isometry.

It only remains to argue that
\[
\E\left[ \left|\int_0^t\int_{{\R^d}\times {\R^d}}(H_n-H)(x,y,\xi_s) \sigma_s(\dx)\sigma_s(\dy)\ds\right| \right] \to 0.
\]
This follows from dominated convergence on noting that $H_n \to H$ pointwise, and making use of the bounds
\[
|H_n-H|(x,y,\xi_s) \le 2c_K (1 + |x|^q + |y|^q)
\]
and
\begin{align*}
\int_{{\R^d}\times {\R^d}} & (1 + |x|^q + |y|^q) |\rho_s(x) - \xi_s(\rho_s)|  |\rho_s(y) - \xi_s(\rho_s)| \xi_s(\dx) \xi_s(\dy) \\
&\le \left( \int_{\R^d} (1 + |x|^q) |\rho_s(x) - \xi_s(\rho_s)| \xi_s(\dx) \right)^2,
\end{align*}
which is $\P \otimes \ds$-integrable thanks to \eqref{eq_Ito_assumption_v2}. All in all, we deduce that \eqref{eq_L_Ito_bpclosure} carries over from $(f_n,g_n,H_n)$ to $(f,g,H)$.
\end{proof}

\begin{proof}[Proof of Theorem~\ref{T_Ito}]
Define $\Acal=\{(f,\frac{\partial f}{\partial\mu},\frac{\partial^2f}{\partial\mu^2})\colon \text{$f$ is a $C^2$ cylinder function}\}$. According to Lemmas~\ref{L_Ito_D} and~\ref{L_Ito_bpclosure}, \eqref{eq_L_Ito_bpclosure} holds for all elements of the local b.p.~closure of $\Acal$. In particular, by Theorem~\ref{T_approx_cyl_C2}, this closure contains all triplets $(f,\frac{\partial f}{\partial\mu},\frac{\partial^2f}{\partial\mu^2})$ with $f$ in $C^2(\Pcal_q)$. This gives the result.
\end{proof}

\section{Viscosity solutions and HJB equation}\label{S_viscosity}

Fix exponents $p \in [1,\infty) \cup \{0\}$ and $q \in [1,p] \cup \{0\}$. Using the dynamic programming principle, we will prove that the value function \eqref{eq_value_function} is a viscosity solution of the following HJB equation:
\begin{align}
\beta u(\mu) + \sup_{\rho\in\Hb}\left\{ - c(\mu,\rho) - Lu(\mu,\rho) \right\} = 0, \quad \mu\in\Pcal_p, \label{eq_HJB} 
\end{align}
where
the operator $L$ is 
given by
$$Lf(\mu,\rho)=\frac12  \int_{{\R^d}\times {\R^d}} \frac{\partial^2 f}{\partial \mu^2}(x,y,\mu) \sigma(\dx)\sigma(\dy)$$
with $\sigma(\dx)=(\rho(x)-\mu(\rho))\mu(\dx)$, for any $f \in C^2(\Pcal_q)$, $\mu \in \Pcal_p$, and $\rho \in L^1(\mu)$ such that $\frac{\partial^2 f}{\partial \mu^2}(\fdot,\fdot,\mu)$ belongs to $L^1(\sigma \otimes \sigma)$. In all other cases we set $Lf(\mu,\rho)=+\infty$ by convention.

\begin{remark}\label{rem3}
We observe that when $\mu = \delta_x \in \mathcal{P}^s$ and $\beta > 0$, then \eqref{eq_HJB} simplifies to: $u(\delta_x)=c(x)/\beta$ 
where
\[
c(x) = \inf_{\rho\in \mathbb H}  c(\delta_x,\rho).
\]
This can be interpreted as a kind of boundary condition. Since an MVM starting at a Dirac measure $\delta_x$ stays there for all times, the value function \eqref{eq_value_function} must satisfy $v(\delta_x)=c(x)/\beta$, which is exactly \eqref{eq_HJB}. Note that (up to required continuity or semi-continuity conditions), we can modify the value of $c$ only on the set of singular measures --- such an action is then equivalent to changing the boundary values of the problem, since this change will affect the behaviour before entry time to $\mathcal{P}^s$ through its change to the final value accrued after the entry time to the set $\mathcal{P}^s$.
\end{remark}

The following is the main result of this paper. The notion of viscosity solution is defined precisely below. It will be convenient to introduce the notation $\Hb_c := \Hb \cap C_c(\R^d)$. Recall also the standing assumptions in Section~\ref{sec:contr-probl-dynam} placed on $\Hb,\beta,c$. 

\begin{theorem}\label{thetheorem}
Assume that 
\begin{enumerate}
\item\label{itii0}  there is a constant $\r \in (0,\infty)$ such that $|\rho(x)| \le R( 1 + |x|^p )$ for all $x \in {\R^d}$ and $\rho \in \Hb_c$;
\item\label{itii} $\mu\mapsto c(\mu,\rho)$ is upper semi-continuous for every $\rho\in \mathbb H_c$;
\item\label{itiii} for every $\mu \in \Pcal_p$ and every $f \in C^2(\Pcal_q)$,
$$\sup_{\rho\in\Hb}\left\{ - c(\mu,\rho) - Lf(\mu,\rho) \right\}
=\sup_{\rho\in\Hb_c}\left\{ - c(\mu,\rho) - Lf(\mu,\rho) \right\}.$$
\end{enumerate}
Then the value function $v\colon\Pcal_p\to\overline{\R}$ given by \eqref{eq_value_function} is a viscosity solution of \eqref{eq_HJB}.

If we additionally suppose that $\beta > 0$ and 
\begin{enumerate}[resume]
\item\label{itv} $v\in C(\Pcal_p)$;
\item \label{itvi} $\mu\mapsto c(\mu,\rho)$ is continuous on $\Pc(\{x_1,...,x_N\})$ uniformly in $\rho\in\Hb_c$ for any $N\in\Nb$ and $x_1,...,x_N\in  {\R^d}$, 
\end{enumerate}
then $v$ is the unique finite continuous viscosity solution of \eqref{eq_HJB}.
\end{theorem}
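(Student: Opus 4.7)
The proof decomposes into three parts: (a) the viscosity subsolution property of $v$, (b) the supersolution property, and (c) the comparison principle which yields uniqueness. Parts (a) and (b) use only assumptions (i)--(iii) and rest on the dynamic programming principle (Theorem~\ref{thm_dpp_main_text}), the It\^o formula (Theorem~\ref{T_Ito}), and the existence of constant controls (Theorem~\ref{thm5}); part (c) additionally uses (iv)--(v) and $\beta>0$.

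For the subsolution property I would fix $\varphi\in C^2(\Pcal_q)$ with $v-\varphi$ attaining a local maximum at $\mu$, normalised so that $v(\mu)=\varphi(\mu)$, and any $\rho\in\Hb_c$. By Theorem~\ref{thm5} together with Lemma~\ref{rem1} (which uses assumption (i)), the constant control $(\xi,\rho)$ with $\xi_0=\mu$ is admissible. At the stopping time $\tau_h=h\wedge\inf\{t:\xi_t\notin U\}$, where $U$ is a small neighbourhood of $\mu$ on which $\varphi\ge v$, the DPP gives
\[
\varphi(\mu)=v(\mu)\le \E\!\left[e^{-\beta\tau_h}\varphi(\xi_{\tau_h})+\int_0^{\tau_h}e^{-\beta t}c(\xi_t,\rho)\,dt\right].
\]
Expanding $e^{-\beta t}\varphi(\xi_t)$ via Theorem~\ref{T_Ito} (after compact localisation as in Remark~\ref{rem:MVMCompact}\ref{page:mvm_localise_compact}, ensuring the martingale term vanishes in expectation), dividing by $h$ and letting $h\downarrow 0$ using the upper semi-continuity of $c(\cdot,\rho)$ gives $\beta\varphi(\mu)-c(\mu,\rho)-L\varphi(\mu,\rho)\le 0$. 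Taking the supremum over $\rho\in\Hb_c$ and invoking assumption (iii) yields the subsolution inequality. The supersolution property is dual: for $\varphi$ touching $v$ from below at $\mu$, the DPP produces, for each $\eps>0$, an $\eps$-optimal admissible control $(\xi^\eps,\rho^\eps)$ with $\xi^\eps_0=\mu$; combining $v\ge\varphi$ with Theorem~\ref{T_Ito} and the pointwise bound $-\beta\varphi+L\varphi(\cdot,\rho)+c(\cdot,\rho)\ge\inf_{\rho\in\Hb}\{-\beta\varphi+L\varphi+c\}$, then choosing $\eps=\eps(h)\to 0$ appropriately and passing $h\downarrow 0$, delivers $\beta\varphi(\mu)+\sup_{\rho\in\Hb}\{-c(\mu,\rho)-L\varphi(\mu,\rho)\}\ge 0$.

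For uniqueness, the central structural fact is the decreasing-support property of MVMs (Remark~\ref{rem:MVMCompact}\ref{it2iii}): any continuous MVM started at a measure supported on a finite set $F=\{x_1,\ldots,x_N\}$ remains forever in the finite-dimensional simplex $\Pcal(F)\cong\{p\in\R^N_+:\sum_i p_i=1\}$. The restriction of the control problem to initial data in $\Pcal(F)$ is therefore a genuine finite-dimensional control problem, whose HJB equation is a (degenerate) second-order elliptic PDE on a compact convex domain with Dirichlet-type boundary data at the vertices $\delta_{x_i}$ pinned by Remark~\ref{rem3} (here $\beta>0$ is essential). On $\Pcal(F)$, classical finite-dimensional viscosity theory applies: the Crandall--Ishii doubling-of-variables method gives a comparison principle, with assumption (v) supplying the required uniform modulus for the Hamiltonian in $\mu$. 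To transfer comparison back to all of $\Pcal_p$, I would approximate $\mu\in\Pcal_p$ by the discretisations $T_n^*\mu\in\Pcal(\{x^n_0,\ldots,x^n_{N_n}\})$ constructed in Section~\ref{sec:diff-calc} and use the continuity of $v$ (assumption (iv)) and of any competitor solution to pass the finite-support comparison inequality to the limit.

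The main obstacle is precisely this comparison step: there is no direct analogue of Ishii's lemma for HJB equations on the Wasserstein space $\Pcal_p$ in the generality required here, so the reduction to finite support -- made possible by the structural decreasing-support property of MVMs -- is essential. The continuity assumption (iv) on $v$ is the price paid for this reduction, and, as the authors themselves note, removing it would demand a genuinely different idea.
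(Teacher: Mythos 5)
Your decomposition into (a) subsolution, (b) supersolution, (c) comparison matches the paper's, and your part (c) is essentially the paper's argument: decreasing support reduces the problem to the simplex $\Pcal(\{x_1,\dots,x_N\})$, Crandall--Ishii gives comparison there (with assumption \ref{itvi} supplying the modulus for $\tilde H$), and density of finitely supported measures plus continuity of $u,v$ extends it to $\Pcal_p$. That part is sound.

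The gap is in (a) and (b): you prove the \emph{standard} viscosity property, not the one in Definition~\ref{D_visc_sol}, and the two differ precisely because $v$ is not assumed continuous in the first half of the theorem. Under the paper's definition the test function must satisfy $f(\bar\mu)=\hat v(\bar\mu)=\limsup_{\mu\to\bar\mu,\,\mu\preceq\bar\mu}v(\mu)$ and $f\ge v$ on all of $D_{\bar\mu}$, and the conclusion to be established is $\liminf_{\mu\to\bar\mu,\,\mu\preceq\bar\mu}H(\mu;f)\le 0$ rather than $H(\bar\mu;f)\le 0$. Your normalisation $f(\bar\mu)=v(\bar\mu)$ is therefore not available ($\hat v(\bar\mu)$ may strictly exceed $v(\bar\mu)$), and starting the controlled process at $\bar\mu$ itself gives you no handle on $\hat v(\bar\mu)$. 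The paper instead argues by contradiction: assuming $\check H(\bar\mu;f)>0$, it finds $\bar\rho\in\Hb_c$ and an open set $U$ on which $\beta f-c(\cdot,\bar\rho)-Lf(\cdot,\bar\rho)>\kappa$ (this is where upper semicontinuity \ref{itii} enters), runs the constant control from a sequence $\mu_n\preceq\bar\mu$ with $v(\mu_n)\to\hat v(\bar\mu)$, exploits the strictness $f>\hat v$ on $D_{\bar\mu}\setminus\{\bar\mu\}$ (obtained via the perturbation lemma, Lemma~\ref{L_viscsol_strict}) together with De~la~Vall\'ee-Poussin localisation to extract a \emph{uniform} gap $\tfrac{\varepsilon\wedge\kappa}{2}e^{-\beta}$ from the DPP, and lets $n\to\infty$. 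No division by $h$ occurs, which also sidesteps the delicate interchange of limits your supersolution sketch requires when choosing $\varepsilon(h)$-optimal controls. Your $h\downarrow 0$ argument could perhaps be repaired under additional continuity of $v$ and of $c$, but as written it does not prove Theorems~\ref{thm2} and~\ref{thm3} in the form needed for the first assertion of the theorem, where no regularity of $v$ is assumed.
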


\begin{proof}
The first part of the conclusion follows by Theorem~\ref{thm2}, Theorem~\ref{thm3} and Remark~\ref{rem3}, and the second part by Theorem~\ref{T_comparison}. Note that condition \ref{itii0} implies condition \ref{T_comparison_2} of Theorem~\ref{T_comparison}, after taking $\Hb_c$ in the theorem, in place of $\Hb$.
\end{proof}


The equation \eqref{eq_HJB} above is a (degenerate) elliptic equation. To see this, write \eqref{eq_HJB} as
\[
H\Big(\mu,u(\mu),\frac{\partial^2 u}{\partial \mu^2}(\fdot,\fdot,\mu)\Big) = 0, \quad \mu\in\Pcal_p, 
\]
where the Hamiltonian $H$ is defined for measures $\mu\in\Pcal_p$, real numbers $r\in\R$, and functions $\varphi\colon {\R^d}\times {\R^d}\to\R$ by the formula
\begin{align*}
H(\mu,r,\varphi) = \beta r +  \sup_{\rho\in\Hb} \Big\{ - c(\mu,\rho) - \frac12 \int_{{\R^d}\times {\R^d}} & \varphi(x,y)(\rho(x) - \mu(\rho)) \\
&\times(\rho(y) - \mu(\rho)) \mu(\dx)\mu(\dy) \Big\},
\end{align*}
whenever this is well-defined. The Hamiltonian is (degenerate) elliptic in the sense that
\[
\varphi \succeq \psi \quad \Longrightarrow \quad H(\mu,r,\varphi) \le H(\mu,r,\psi),
\]
where the notation $\varphi \succeq \psi$ means that $\varphi-\psi$ is a positive definite function, that is,
\[
\int_{{\R^d}\times {\R^d}} (\varphi-\psi)(x,y) \nu(\dx)\nu(\dy) \ge 0
\]
for any signed measure $\nu$.

To avoid the need for any \emph{a priori} regularity of the value function, we work with a notion of viscosity solution that we now introduce. 
Motivated by the fact that MVMs have decreasing support in the sense of \eqref{eq_supp_cont}, we define a partial order $\preceq$ on $\Pcal_p$ by
\[
\mu\preceq\nu \quad\Longleftrightarrow\quad \supp(\mu)\subseteq\supp(\nu).
\]
Thus Remark~\ref{rem:MVMCompact}\ref{it2iii} states that MVMs are decreasing with respect to this order. This means that the effective state space for an MVM starting at a measure $\bar\mu\in\Pcal_p$ is the set
\begin{equation}\label{eq_Dmu}
D_{\bar\mu} = \{\mu\in\Pcal_p\colon \mu\preceq\bar\mu\}.
\end{equation}
This set is weakly closed, and hence also closed in $\Pcal_p$, and it is worth mentioning that for Dirac masses, $D_{\delta_x} = \{\delta_x\}$ is a singleton. Equipped with the subspace topology inherited from $\Pcal_p$, $D_{\bar\mu}$ is a Polish space, and we may consider upper and lower semicontinuous envelopes of functions defined on $D_{\bar\mu}$. In particular, for any $u\colon\Pcal_p\to\overline\R$, the restriction of $u$ to $D_{\bar\mu}$ has semicontinuous envelopes given by
\begin{align*}
(u|_{D_{\bar\mu}})^*(\mu) &:= \limsup_{\nu\to\mu,\,\nu\preceq\bar\mu}u(\nu) \\
(u|_{D_{\bar\mu}})_*(\mu) &:= \liminf_{\nu\to\mu,\,\nu\preceq\bar\mu}u(\nu)
\end{align*}
for all $\mu\preceq\bar\mu$.

\begin{remark}
    Note that assumption \ref{itv} of Theorem~\ref{thetheorem} is a relatively strong requirement. However in some cases this can be checked directly, see for example Lemma 3.1 in \cite{cox2017}.
    On the contrary, assumption \ref{itiii} is often satisfied. For instance, this is always the case for
    $$\mathbb H:=\{\rho\in C(\R^d)\colon \rho(x)\leq  M(1+|x|^{p-q})\}$$
    for some $M>0$,
    when $\rho\mapsto c(\mu,\rho)$ is continuous along pointwise converging sequences in $\Hb$. 
\end{remark}

With this in mind, we now state our definition of viscosity solution. To keep things as transparent as possible, the definition is given without resorting to notation involving $D_{\bar\mu}$ and semicontinuous envelopes. Still, it is possible and technically useful to recast the definition in this language, and we will do so momentarily; see the discussion before Lemma~\ref{L_viscsol_strict} below. For any test function $f\in C^2(\Pcal_q)$, define $H(\fdot;f)\colon\Pcal_p\to\overline\R$ by
\begin{equation}\label{eq_Hmuf}
H(\mu;f) = \beta f(\mu) + \sup_{\rho\in\Hb}\left\{ - c(\mu,\rho) - Lf(\mu,\rho) \right\}.
\end{equation}
    We restrict our test functions to belong to the possibly smaller space $C^2(\Pcal_q) \subset C^2(\Pcal_p)$ in order to be able to apply the It\^o formula, Theorem~\ref{T_Ito}. This is crucial for proving that the value function is a viscosity solution.

We can now state the definition of viscosity solution.

\begin{definition}\label{D_visc_sol}
Consider a function $u\colon\Pcal_p\to\overline\R$.
\begin{itemize}
\item $u$ is a \emph{viscosity subsolution} of \eqref{eq_HJB} if
\[
\liminf_{\mu\to\bar\mu,\, \mu\preceq\bar\mu} H(\mu;f) \le 0
\]
holds for all $\bar\mu\in\Pcal_p$ and $f\in C^2(\Pcal_q)$ such that $f(\bar\mu)=\limsup_{\mu\to\bar\mu,\,\mu\preceq\bar\mu}u(\mu)$ and $f(\mu)\ge u(\mu)$ for all $\mu\preceq\bar\mu$.
\item $u$ is a \emph{viscosity supersolution} of \eqref{eq_HJB} if
\[
\limsup_{\mu\to\bar\mu,\, \mu\preceq\bar\mu} H(\mu;f) \ge 0
\]
holds for all $\bar\mu\in\Pcal_p$ and $f\in C^2(\Pcal_q)$ such that $f(\bar\mu)=\liminf_{\mu\to\bar\mu,\,\mu\preceq\bar\mu}u(\mu)$ and $f(\mu)\le u(\mu)$ for all $\mu\preceq\bar\mu$.
\item $u$ is a \emph{viscosity solution} of \eqref{eq_HJB} if it is both a viscosity subsolution and a viscosity supersolution.
\end{itemize}
\end{definition}

An equivalent way of expressing the subsolution property of $u$ is as follows: for any $\bar\mu\in\Pcal_p$ and $f\in C^2(\Pcal_q)$, one has the implication
\[
\text{$f(\bar\mu)=\hat u(\bar\mu)$ and $f|_{D_{\bar\mu}} \ge \hat u$} \quad\Longrightarrow\quad \check H(\bar\mu; f)\le0,
\]
where $\hat u = (u|_{D_{\bar\mu}})^*$ and $\check H(\fdot; f) = (H(\fdot;f)|_{D_{\bar\mu}})_*$. The analogous statement holds for supersolutions. 

\begin{remark}
If $u\in C(\Pcal_p)$ is a subsolution in the sense of Definition~\ref{D_visc_sol}, then it is also a subsolution in the sense that
$\liminf_{\mu\to\bar\mu} H(\mu;f) \le 0$
holds for all $\bar\mu\in\Pcal_p$ and $f\in C^2(\Pcal_q)$ such that $f(\bar\mu)=u(\bar\mu)$ and $f(\mu)\ge u(\mu)$ on $\Pcal_p$, and similarly for supersolutions. 
In order to obtain comparison, it is however crucial for us to work with the above definition which takes into account the partial ordering $\preceq$; \cf{} Lemma~\ref{L_HJB_N}.
\end{remark}

The following result shows that, as in finite-dimensional situations, it is enough to consider test functions that are strictly larger than $\hat u$ away from $\bar\mu$.

\begin{lemma}\label{L_viscsol_strict}
Assume that there is a constant $R\in \R_+$ such that 
\begin{equation}\label{eqn10}
|\rho(x)|\leq R(1+|x|^p)
\end{equation}
 for all $x\in \R^d$ and $\rho\in \Hb$.
Consider a function $u\colon\Pcal_p\to\overline\R$.
\begin{enumerate}
\item\label{L_viscsol_strict_1} $u$ is a viscosity subsolution of \eqref{eq_HJB} if and only if for any $\bar\mu\in\Pcal_p$ and $f\in C^2(\Pcal_q)$, one has the implication
\[
\text{$f(\bar\mu)=\hat u(\bar\mu)$ and $f(\mu) > \hat u(\mu)$ for all $\mu\in D_{\bar\mu}\setminus\{\bar\mu\}$} \quad\Longrightarrow\quad \check H(\bar\mu; f)\le0,
\]
where $\hat u = (u|_{D_{\bar\mu}})^*$ and $\check H(\fdot; f) = (H(\fdot;f)|_{D_{\bar\mu}})_*$. 
\item\label{L_viscsol_strict_2} $u$ is a viscosity supersolution of \eqref{eq_HJB} if and only if for any $\bar\mu\in\Pcal_p$ and $f\in C^2(\Pcal_q)$, one has the implication
\[
\text{$f(\bar\mu)=\check u(\bar\mu)$ and $f(\mu) < \check u(\mu)$ for all $\mu\in D_{\bar\mu}\setminus\{\bar\mu\}$} \quad\Longrightarrow\quad \hat H(\bar\mu; f)\ge0,
\]
where $\check u = (u|_{D_{\bar\mu}})_*$ and $\hat H(\fdot; f) = (H(\fdot;f)|_{D_{\bar\mu}})^*$.
\end{enumerate}
\end{lemma}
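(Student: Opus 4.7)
The plan is to address the two implications separately. In both (i) and (ii), one direction is immediate: a viscosity subsolution (resp.\ supersolution) in the sense of Definition~\ref{D_visc_sol} trivially satisfies the strict version of the lemma, since every strict test function is in particular a non-strict one. I therefore focus on the nontrivial converse.

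Given a non-strict sub-test function $f \in C^2(\Pcal_q)$ touching $\hat u$ from above at $\bar\mu$, I would construct a small nonnegative $C^2$ perturbation $g$ vanishing only at $\bar\mu$, take $f_\epsilon := f + \epsilon g$ to obtain a strict test function for each $\epsilon > 0$, apply the strict subsolution hypothesis to $f_\epsilon$, and then pass $\epsilon\to 0$. Concretely, fix a countable family $\{\phi_i\}_{i \geq 1} \subset C_b(\R^d)$ with $\|\phi_i\|_\infty \leq 1$ that separates probability measures (e.g.\ the characters $x\mapsto \cos(t\cdot x),\sin(t\cdot x)$ with $t$ ranging over a countable dense subset of $\R^d$), and set
$$g(\mu) := \sum_{i=1}^{\infty} 2^{-i}\bigl(\mu(\phi_i) - \bar\mu(\phi_i)\bigr)^2.$$
Viewing $g$ as the pointwise limit of the $C^2$ cylinder truncations $g_n(\mu) := \sum_{i=1}^n 2^{-i}(\mu(\phi_i)-\bar\mu(\phi_i))^2$, whose first and second derivatives are uniformly bounded, one checks that $g \in C^2(\Pcal_0) \subset C^2(\Pcal_q)$, with
$$\frac{\partial g}{\partial \mu}(x,\mu) = 2\sum_{i=1}^\infty 2^{-i}(\mu(\phi_i) - \bar\mu(\phi_i))\phi_i(x), \qquad \frac{\partial^2 g}{\partial \mu^2}(x, y, \mu) = 2 \sum_{i=1}^\infty 2^{-i} \phi_i(x) \phi_i(y).$$
By construction $g \geq 0$, $g(\bar\mu)=0$, and since $\{\phi_i\}$ is separating, $g(\mu) > 0$ for every $\mu \neq \bar\mu$. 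Hence $f_\epsilon(\bar\mu) = \hat u(\bar\mu)$ and $f_\epsilon > \hat u$ on $D_{\bar\mu}\setminus\{\bar\mu\}$, so the strict hypothesis gives $\check H(\bar\mu; f_\epsilon) \leq 0$ for every $\epsilon>0$.

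To pass to the limit, I would use the subadditivity estimate
$$\bigl|H(\mu; f_\epsilon) - H(\mu; f) - \beta \epsilon g(\mu)\bigr| \;\leq\; \epsilon \sup_{\rho \in \Hb} |L g(\mu, \rho)|.$$
The uniform-in-$\rho$ control of the right-hand side is where assumption \eqref{eqn10} becomes essential. Indeed, using $|\rho(x)|\le R(1+|x|^p)$ and $|\phi_i|\le 1$,
$$Lg(\mu,\rho) \;=\; \sum_{i=1}^\infty 2^{-i}\, \Cov_\mu(\phi_i,\rho)^2 \;\leq\; 4 R^2 \bigl(1 + \mu(|x|^p)\bigr)^2,$$
which is locally bounded in $\mu \in \Pcal_p$ near $\bar\mu$ by some constant $C_{\bar\mu}$. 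Combined with $g(\bar\mu)=0$ and continuity of $g$, taking $\liminf_{\mu\to\bar\mu,\,\mu\preceq\bar\mu}$ yields $\check H(\bar\mu; f) \leq \check H(\bar\mu; f_\epsilon) + \epsilon C_{\bar\mu} \leq \epsilon C_{\bar\mu}$, and letting $\epsilon \to 0$ proves (i). Part (ii) follows by the symmetric argument with $f_\epsilon := f - \epsilon g$.

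The main obstacle I expect is precisely this uniform-in-$\rho$ bound: without the polynomial growth hypothesis \eqref{eqn10} on $\Hb$, the covariances $\Cov_\mu(\phi_i,\rho)$ could be unbounded along sequences in $\Hb$ and the perturbation argument would break. The remaining technical points — joint continuity of the derivatives of $g$, the locally uniform growth bound, and the fundamental theorem of calculus on $\Pcal_q$ — are routine but require care in swapping series with limits and integrals, best handled by applying bounded-convergence-style estimates to the cylindrical truncations $g_n$.
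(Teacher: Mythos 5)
Your proposal is correct and follows essentially the same route as the paper: the non-trivial direction is handled by perturbing the test function to $f_\varepsilon=f+\varepsilon g$ with a nonnegative $C^2$ function $g$ vanishing only at $\bar\mu$, bounding $\sup_{\rho\in\Hb}Lg(\mu,\rho)$ via \eqref{eqn10}, and letting $\varepsilon\to0$. The only (cosmetic) difference is the choice of $g$: the paper uses the positive-definite Gaussian kernel $g(\mu)=\frac12\int\int e^{-(x-y)^2/2}(\mu-\bar\mu)^{\otimes2}(\dx,\dy)$, whose Fourier representation $\int_\R|\widehat{(\mu-\bar\mu)}(\theta)|^2\gamma(\d\theta)$ is precisely the continuum analogue of your discrete sum $\sum_i 2^{-i}(\mu(\phi_i)-\bar\mu(\phi_i))^2$ over a separating family of characters, and yields the same bound $Lg(\mu,\rho)\le 2\mu(|\rho|)^2\le 2R^2(1+\mu(|\fdot|^p))^2$.
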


\begin{proof}
Unpacking the definitions, one finds that the properties in the lemma are weaker than the definitions of sub- and supersolution in Definition~\ref{D_visc_sol}. Therefore it is enough to prove the ``if'' statements. Consider \ref{L_viscsol_strict_1}, and assume $u$ satisfies the given property.
Note that for $\bar\mu\in\Pcal^s$, the implication trivially holds true since $D_{\bar\mu}=\{\bar\mu\}$.
Pick therefore $\bar\mu\in\Pcal_p\setminus\Pcal^s$ and $f\in C^2(\Pcal_q)$ such that $f(\bar\mu)=\limsup_{\mu\to\bar\mu,\,\mu\preceq\bar\mu}u(\mu)$ and $f(\mu)\ge u(\mu)$ for all $\mu\preceq\bar\mu$. We must show that $\liminf_{\mu\to\bar\mu,\, \mu\preceq\bar\mu} H(\mu;f) \le 0$.

To this end, for any $\varepsilon>0$ we consider the perturbed test function $f_\varepsilon = f + \varepsilon g$, where we define
\[
g(\mu) = \frac12 \int_{{\R^d}\times {\R^d}} e^{-(x-y)^2/2} (\mu - \bar\mu)(\dx)(\mu - \bar\mu)(\dy).
\]
We start by establishing some properties of $g$. First, $g$ belongs to $C^2(\Pcal_q)$ and its second derivative is $\frac{\partial^2 g}{\partial \mu^2}(x,y,\mu) = e^{-(x-y)^2/2}$. Next, using the identity
\[
e^{-x^2/2} = \int_\R e^{i\theta x} \gamma(\d \theta) \quad \text{where} \quad \gamma(\d \theta) = \frac{1}{\sqrt{2\pi}}e^{-\theta^2/2} \d \theta,
\]
we have for any finite signed measure $\nu$ that
\begin{align*}
\int_{{\R^d}\times {\R^d}} e^{-(x-y)^2/2} \nu(\dx) \nu(\dy) &= \int_{{\R^d}\times {\R^d}} \int_\R e^{i\theta (x-y)} \gamma(\d \theta) \nu(\dx) \nu(\dy) \\
&= \int_\R \Big| \int_{\R^d} e^{i\theta x} \nu(\dx) \Big|^2 \gamma(\d \theta).
\end{align*}
This implies that $g(\mu) > 0$ for every $\mu \ne \bar \mu$, and we clearly have $g(\bar\mu) = 0$. Moreover, the right-hand side is upper bounded by the squared total variation $\|\nu\|_\text{TV}^2$ of $\nu$. As a consequence, writing $\sigma(\dx) = (\rho(x) - \mu(\rho))\mu(\dx)$, we have
\[
Lg(\mu,\rho) = \frac12 \int_{{\R^d}\times {\R^d}} e^{-(x-y)^2/2} \sigma(\dx) \sigma(\dy) \le \frac12 \| \sigma \|_\text{TV}^2 \le 2 \mu(|\rho|)^2.
\]
Since condition \eqref{eqn10} is satisfied, it follows there is a constant $\r \in (0,\infty)$ such that
\[
\sup_{\rho \in \Hb} Lg(\mu,\rho) \le 2 \r^2 (1 + \mu(|\fdot|^p))^2.
\]

We now return to proving that $\liminf_{\mu\to\bar\mu,\, \mu\preceq\bar\mu} H(\mu;f) \le 0$. Using the perturbed test function $f_\varepsilon = f + \varepsilon g$ we have
\begin{align*}
H(\mu, f_\varepsilon) &= \beta f_\varepsilon(\mu) + \sup_{\rho \in \Hb} \{ - c(\mu,\rho) - Lf_\varepsilon(\mu,\rho) \} \\
&\ge \beta f(\mu) + \sup_{\rho \in \Hb} \{ - c(\mu,\rho) - Lf(\mu,\rho) \} - \varepsilon \sup_{\rho \in \Hb} Lg(\mu,\rho) \\
&\ge H(\mu; f) - 2 \varepsilon \r^2 (1 + \mu(|\fdot|^p))^2.
\end{align*}
Rearranging this gives
\[
H(\mu,f) \le H(\mu, f_\varepsilon) + 2 \varepsilon \r^2 (1 + \mu(|\fdot|^p))^2.
\]
Now, $f_\varepsilon$ satisfies $f_\varepsilon(\bar\mu) = \hat u(\bar \mu)$ and $f_\varepsilon(\mu) > \hat u(\mu)$ for all $\mu \le \bar\mu$ different from $\bar\mu$. Therefore, since $u$ satisfies the given property in \ref{L_viscsol_strict_1}, we get
\begin{align*}
\liminf_{\mu\to\bar\mu,\, \mu\preceq\bar\mu} H(\mu,f) &\le \liminf_{\mu\to\bar\mu,\, \mu\preceq\bar\mu} H(\mu, f_\varepsilon) + 2 \varepsilon \r^2 (1 + \bar\mu(|\fdot|^p))^2 \\
&\le 2 \varepsilon \r^2 (1 + \bar\mu(|\fdot|^p))^2.
\end{align*}
Since $\varepsilon>0$ was arbitrary, we obtain $\liminf_{\mu\to\bar\mu,\, \mu\preceq\bar\mu} H(\mu;f) \le 0$ as required.

The corresponding argument in the supersolution case is completely analogous, but uses the perturbed test function $f_\varepsilon = f - \varepsilon g$ instead.
\end{proof}

We next verify that with our definition of viscosity solution, every classical solutions is also a viscosity solution. The proof of this statement relies on the following positive maximum principle.

\begin{lemma}\label{PMP1}
Fix $\bar \mu\in \Pcal_p$, a measurable function $\bar\rho\colon {\R^d}\to\R$,  and  $f\in C^2(\Pcal_q)$ 
such that $Lf(\bar \mu,\bar  \rho)<\infty$. Suppose that
 $f(\bar \mu)=\max_{\mu\in D_{\bar \mu}}f(\mu)$. Then
$Lf(\bar \mu,\bar  \rho)\leq0.$
\end{lemma}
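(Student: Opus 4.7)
The plan is to test the maximum property of $f$ at $\bar\mu$ against perturbations living in $D_{\bar\mu}$ and to read off the sign of $Lf(\bar\mu,\bar\rho)$ from the resulting second-order inequality. For any bounded measurable $h\colon\R^d\to\R$ with $\bar\mu(h)=0$, the measure $\mu_\eps:=(1+\eps h)\bar\mu$ is a probability measure for all $|\eps|$ sufficiently small, and its support is contained in $\supp(\bar\mu)$, so $\mu_\eps\in D_{\bar\mu}$; in particular $f(\mu_\eps)\le f(\bar\mu)$.

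I would first apply the first-order FTC \eqref{eq_C1r_FTC} with the increment $(\mu_\eps-\bar\mu)(\dx)=\eps h(x)\bar\mu(\dx)$, divide by $\eps$, and let $\eps\to 0^{\pm}$. Dominated convergence along the segment $t\mu_\eps+(1-t)\bar\mu\to\bar\mu$ in $\Pcal_q$, justified by joint continuity of $\partial f/\partial\mu$ and its locally uniform $p$-growth, gives
\[
\int_{\R^d} h(x)\,\frac{\partial f}{\partial\mu}(x,\bar\mu)\,\bar\mu(\dx)=0.
\]
Feeding this vanishing first-order term into the second-order FTC \eqref{eq_C2r_FTC} yields
\[
0\ge f(\mu_\eps)-f(\bar\mu)=\eps^2\int_0^1\!\!\int_0^t\!\!\int_{\R^d\times\R^d}\frac{\partial^2 f}{\partial\mu^2}\bigl(x,y,s\mu_\eps+(1-s)\bar\mu\bigr)h(x)h(y)\,\bar\mu(\dx)\bar\mu(\dy)\,\ds\,\dt,
\]
and dividing by $\eps^2$ and sending $\eps\to 0$, the $q$-growth bound \eqref{eq_C2r_p_growth} and joint continuity of $\partial^2 f/\partial\mu^2$ yield, again by dominated convergence,
\[
\int_{\R^d\times\R^d}\frac{\partial^2 f}{\partial\mu^2}(x,y,\bar\mu)\,h(x)h(y)\,\bar\mu(\dx)\bar\mu(\dy)\le 0
\]
for every bounded $h$ with $\bar\mu(h)=0$.

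Finally, I would specialise to $h^*(x):=\bar\rho(x)-\bar\mu(\bar\rho)$ via truncation: set $\rho_n:=(-n)\vee\bar\rho\wedge n$ and $h_n:=\rho_n-\bar\mu(\rho_n)$, each bounded with $\bar\mu(h_n)=0$, and observe that $h_n\to h^*$ pointwise with $|h_n|\le|h^*|+2\bar\mu(|\bar\rho|)$. Applying the previous inequality to $h_n$ and passing to the limit by dominated convergence delivers $2Lf(\bar\mu,\bar\rho)\le 0$, as required. The main obstacle lies precisely in this dominated-convergence step: constructing an integrable dominating function for $|h_n(x)h_n(y)|\,|\partial^2 f/\partial\mu^2(x,y,\bar\mu)|$ requires combining the $q$-growth bound on $\partial^2 f/\partial\mu^2$, the hypothesis $Lf(\bar\mu,\bar\rho)<\infty$ (which supplies absolute $\sigma\otimes\sigma$-integrability of $\partial^2 f/\partial\mu^2(\fdot,\fdot,\bar\mu)$), and $\bar\mu\in\Pcal_p\subseteq\Pcal_q$, with the cross terms controlled by extracting the auxiliary estimate $\int_{\R^d}|x|^q|h^*(x)|\,\bar\mu(\dx)<\infty$ (the non-trivial case being $h^*\not\equiv 0$).
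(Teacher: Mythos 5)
Your approach is genuinely different from the paper's. The paper argues probabilistically: it first takes $\bar\rho\in C_c(\R^d)$, invokes Theorem~\ref{thm5} to produce a constant-control MVM started at $\bar\mu$ (which stays in $D_{\bar\mu}$ by Remark~\ref{rem:MVMCompact}\ref{it2iii}), applies It\^o's formula, and derives a contradiction from the optional stopping theorem under the assumption $Lf(\bar\mu,\bar\rho)>0$; it then appeals to ``a density argument'' for general measurable $\bar\rho$. Your argument is purely analytic: the perturbations $\mu_\eps=(1+\eps h)\bar\mu$ do lie in $D_{\bar\mu}\cap\Pcal_p$ for small $|\eps|$, the two-sided first-order limit correctly kills the linear term, and the second-order FTC then yields $\int\!\!\int \frac{\partial^2 f}{\partial\mu^2}(x,y,\bar\mu)h(x)h(y)\bar\mu(\dx)\bar\mu(\dy)\le 0$ for every \emph{bounded} measurable $h$ with $\bar\mu(h)=0$. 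The dominated convergence steps are justified as you say, since $\{(1+rh)\bar\mu: |r|\le\eps_0\}$ is a compact subset of $\Pcal_q$ and the locally uniform $q$-growth bounds apply. This is cleaner and more self-contained than the paper's route, it avoids the existence theorem and It\^o's formula entirely, and it already covers all bounded measurable $\bar\rho$ directly, which is strictly more than the paper's directly-treated case $\bar\rho\in C_c(\R^d)$.

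The gap is in your final truncation step, and it is real. The hypothesis $Lf(\bar\mu,\bar\rho)<\infty$ gives you exactly $\bar\rho\in L^1(\bar\mu)$ and $\frac{\partial^2 f}{\partial\mu^2}(\fdot,\fdot,\bar\mu)\in L^1(\sigma\otimes\sigma)$ with $\sigma(\dx)=h^*(x)\bar\mu(\dx)$; it does \emph{not} supply the auxiliary estimate $\int_{\R^d}|x|^q\,|h^*(x)|\,\bar\mu(\dx)<\infty$ that you need to dominate the cross terms $|h^*(x)|\cdot\bigl|\frac{\partial^2 f}{\partial\mu^2}(x,y,\bar\mu)\bigr|$ arising from expanding $(|h^*(x)|+C)(|h^*(y)|+C)$. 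For instance, if the second derivative vanishes identically the $L^1(\sigma\otimes\sigma)$ condition is vacuous, yet $\int|x|^q|\bar\rho(x)|\bar\mu(\dx)$ may be infinite; nothing in Definition~\ref{D_C1r}--\ref{D_C2r} or in the definition of $L$ rules this out. So as written, the passage from bounded $h$ to general $h^*$ does not close. To be fair, the paper's own proof is no more explicit at the corresponding point (its concluding ``density argument'' is asserted, not performed), so you have reproduced, in sharper form, exactly the step that is delicate. If you either add the integrability assumption $\int|x|^q|\bar\rho(x)|\bar\mu(\dx)<\infty$ (which holds in all the paper's applications, e.g.\ under the growth conditions of Lemma~\ref{rem1}), or find a truncation for which the cross terms provably vanish, your proof would be complete and, in my view, preferable to the original.
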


\begin{proof}
Assume first that $\bar \rho\in C_c({\R^d})$ and let $(\xi,\rho)$ be the weak solution of \eqref{eq_MVM_SDE} satisfying $\xi_0=\bar \mu$ and $\rho\equiv\bar \rho$ given by Theorem~\ref{thm5}. By Remark~\ref{rem:MVMCompact}\ref{it2iii} we know that $\xi_t\in D_{\bar \mu}$ for each $t$ almost surely.
Since \eqref{eq_Ito_assumption} is always satisfied for $\rho\in C_c({\R^d})$, an application of It\^o's formula yields
$$
\begin{aligned}
f(\xi_t) &= f(\bar\mu) + \int_0^t \int_{\R^d} \frac{\partial f}{\partial \mu}(x,\xi_s)\sigma_s(\dx) \d W_s 
+  \int_0^t  Lf(\xi_s, \bar\rho) \ds,
\end{aligned}
$$
where we write $\sigma_s(\dx)=(\bar\rho(x)-\xi_s(\bar \rho))\xi_s(\dx)$. 

Following the proof of \cite[Lemma~2.3]{FL:16}, assume that $Lf(\bar \mu, \bar\rho)>0$, consider the random time
$$\tau:=\inf\{s\geq0\colon Lf(\xi_s,\bar \rho)\leq0\},$$
and note that the continuity of $Lf(\fdot,\bar\rho)$ yields $\tau>0$. Letting $(\tau_n)_{n\in \N}$ be a localising sequence for $\int_0^\cdot \int_{\R^d} \frac{\partial f}{\partial \mu}(x,\xi_s)\sigma_s(\dx) \d W_s$ this implies
$$0\geq\E[f(\xi_{t\land\tau\land\tau_n})-f(\bar \mu)]
=\E\bigg[\int_0^{t\land\tau\land\tau_n}  Lf(\xi_s,\bar \rho) \ds
\bigg]> 0,$$
giving the necessary contradiction. A density argument allows us to extend this result to compactly supported measurable $\bar \rho$ first, and then to any measurable $\bar \rho$ such that $Lf(\bar \mu,\bar  \rho)<\infty$.
\end{proof}

\begin{definition}
A map
 $u\in  C^2(\Pcal_q)$ is called \emph{classical solution} of \eqref{eq_HJB} if $Lu( \rho,\mu)$ is well-defined and finite for each $\mu\in \Pcal_p$ and $\rho \in \Hb$, and $u$ satisfies \eqref{eq_HJB}.
\end{definition}

We can now prove that under mild conditions on $\Hb$ classical solutions are viscosity solutions.

\begin{proposition}\label{lem1}
Let $u$ be a classical solution of \eqref{eq_HJB} and assume that for each $f\in C^2(\Pcal_q)$ and $\mu\in \Pcal_p$, $Lf( \rho,\mu)< \infty$ for some $\rho \in \Hb$. Then $u$ is a viscosity solution of \eqref{eq_HJB}. 
\end{proposition}

\begin{proof}
We first prove that $u$ is a viscosity subsolution. Fix $\bar\mu\in\Pcal_p\setminus\Pcal^s$ and $f\in C^2(\Pcal_q)$ such that $f(\bar\mu)=\limsup_{\mu\to\bar\mu,\,\mu\preceq\bar\mu}u(\mu)=u(\bar \mu)$ and $f(\mu)\ge u(\mu)$ for all $\mu\preceq\bar\mu$.
Fix $\rho\in \Hb$ and note that $u-f\in  C^2(\Pcal_q)$ and attains its maximum over $D_{\bar \mu}$ at $\bar \mu$. Since $Lu( \rho,\bar \mu)<\infty$ for each $\rho \in \Hb$, Lemma~\ref{PMP1}  yields
$Lu(\bar \mu,\rho)-Lf(\bar \mu,\rho)\leq 0$ for each $\rho\in \Hb$. Using that $ H(\bar \mu;u)= 0$ we can thus compute
\begin{align*}
\liminf_{\mu\to\bar\mu,\, \mu\preceq\bar\mu} H(\mu;f) 
\leq H(\bar\mu;f) -H(\bar\mu;u)
\leq \sup_{\rho\in\Hb}\left\{ Lu(\bar\mu,\rho) -Lf(\bar\mu,\rho)\right\}
\le 0.
\end{align*}
We now prove the supersolution property. Fix $f$ as before, replacing $f(\mu)\geq u(\mu)$ with $f(\mu)\le u(\mu)$, for all $\mu\preceq\bar\mu$. Fix $\bar \rho\in \Hb$ such that $Lf(\bar \rho,\bar \mu)<\infty$ and note that Lemma~\ref{PMP1} yields
$Lu(\bar \mu,\bar \rho)-Lf(\bar \mu,\bar\rho)\geq 0$.
Using that
$$-(\beta u(\bar \mu)-c(\bar \mu,\bar\rho)-Lu(\bar \mu,\bar\rho))\geq 0,$$
we can then compute
$$
\limsup_{\mu\to\bar\mu,\, \mu\preceq\bar\mu} H(\mu;f) 
\geq H(\bar\mu;f)\\
\geq  Lu(\bar\mu,\bar \rho) -Lf(\bar\mu,\bar\rho)\\
\geq 0,
$$
concluding the proof.
\end{proof}

We conclude this section with a verification theorem for classical solutions.
\begin{proposition}\label{prop1}
Consider a cost function $c$ satisfying condition \eqref{eqn9}.
Suppose that \eqref{eq_HJB} is satisfied for some $u\in  C^2(\Pcal_q)$
and let $v$ be the value function given in \eqref{eq_value_function}. Suppose that for some $\e>0$ it holds
\begin{equation}\label{eqn4}
\E[\sup_{t\geq0}|u(\xi_t)|e^{(\e-\beta) t}]<\infty
\end{equation}
 for each admissible control $(\xi,\rho)$. Then $u\leq v$. Moreover, given $\mu\in\Pcal_p$, if there exists an admissible control $(\xi^*,\rho^*)$ such that $\xi_0^*=\mu$ and
$$\rho_s^*\in{\textup{argmax}}_{\rho\in\Hb} \left\{ - c(\xi_s^*,\rho) - Lu(\xi_s^*,\rho) \right\},\quad \P\otimes \dt-a.e.,$$
then $(\xi^*,\rho^*)$ is an optimal control and $u(\mu)=v(\mu)$.
\end{proposition}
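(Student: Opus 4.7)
The plan is the standard verification argument adapted to the MVM setting: we apply the It\^o formula of Theorem~\ref{T_Ito} to the discounted test function $e^{-\beta t}u(\xi_t)$ along an arbitrary admissible control, use the HJB inequality to dominate the drift by $-e^{-\beta t}c(\xi_t,\rho_t)$, localize the resulting stochastic integral, and finally pass to the limit with the help of the integrability hypothesis~\eqref{eqn4} and the lower-bound condition~\eqref{eqn9}.

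First I would fix an admissible control $(\xi,\rho)$ with $\xi_0=\mu$. Since admissibility ensures condition~\eqref{eqn3}, Theorem~\ref{T_Ito} applies to $u\in C^2(\Pcal_q)$, giving
\[
u(\xi_t) = u(\mu) + M_t + \int_0^t Lu(\xi_s,\rho_s)\,\d s,
\]
where $M_t:=\int_0^t\!\int_{\R^d}\frac{\partial u}{\partial\mu}(x,\xi_s)\sigma_s(\d x)\d W_s$ is a local martingale and $\sigma_s(\d x)=(\rho_s(x)-\xi_s(\rho_s))\xi_s(\d x)$. An integration by parts yields
\[
e^{-\beta t}u(\xi_t) = u(\mu) + \widetilde M_t + \int_0^t e^{-\beta s}\bigl(-\beta u(\xi_s)+Lu(\xi_s,\rho_s)\bigr)\d s,
\]
with $\widetilde M_t:=\int_0^t e^{-\beta s}\d M_s$ still a local martingale. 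The HJB equation gives $-\beta u(\nu)+Lu(\nu,\rho')\ge -c(\nu,\rho')$ for every $\nu\in\Pcal_p$ and $\rho'\in\Hb$, so
\begin{equation}\label{eq:verif_ineq}
u(\mu) \le e^{-\beta t}u(\xi_t) + \int_0^t e^{-\beta s}c(\xi_s,\rho_s)\,\d s - \widetilde M_t.
\end{equation}

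Next I would take a localizing sequence $\tau_n\uparrow\infty$ that makes $\widetilde M^{\tau_n}$ a true martingale, evaluate \eqref{eq:verif_ineq} at $t\wedge\tau_n$, and take expectations to eliminate the martingale term. To send $n\to\infty$ at fixed $t$, I would dominate $|e^{-\beta(t\wedge\tau_n)}u(\xi_{t\wedge\tau_n})|$ by $\sup_{s\ge0}|u(\xi_s)e^{(\varepsilon-\beta)s}|$, which is integrable by~\eqref{eqn4}, and handle the cost integral by splitting $c=c_+-c_-$ (monotone convergence for $c_+$, dominated convergence for $c_-$ using~\eqref{eqn9}). Then I would let $t\to\infty$: the boundary term $\E[e^{-\beta t}u(\xi_t)]$ vanishes because $|e^{-\beta t}u(\xi_t)|\le e^{-\varepsilon t}\sup_{s\ge0}|u(\xi_s)e^{(\varepsilon-\beta)s}|$ tends to $0$ pointwise and is dominated by an integrable variable, while the cost integral converges to $\E[\int_0^\infty e^{-\beta s}c(\xi_s,\rho_s)\,\d s]$ by the same split. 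Taking the infimum over admissible controls then gives $u(\mu)\le v(\mu)$.

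For the second assertion, I would repeat the argument along the candidate optimiser $(\xi^\ast,\rho^\ast)$. Since $\rho_s^\ast$ maximises $\rho\mapsto -c(\xi_s^\ast,\rho)-Lu(\xi_s^\ast,\rho)$ for $\P\otimes\d t$-almost every $(s,\omega)$, the HJB equation yields the \emph{equality} $-\beta u(\xi_s^\ast)+Lu(\xi_s^\ast,\rho_s^\ast)=-c(\xi_s^\ast,\rho_s^\ast)$, so \eqref{eq:verif_ineq} holds as equality along this control. The same limit argument then produces $u(\mu)=\E[\int_0^\infty e^{-\beta s}c(\xi_s^\ast,\rho_s^\ast)\,\d s]\ge v(\mu)$, which combined with $u\le v$ gives $u=v$ and optimality of $(\xi^\ast,\rho^\ast)$.

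The main obstacle is the limit passage: one must carefully combine the localisation of $\widetilde M$, the a priori bound~\eqref{eqn4} (which is exactly strong enough to give $e^{-\beta t}u(\xi_t)\to 0$ in $L^1$), and the one-sided integrability condition~\eqref{eqn9} so that both sides of \eqref{eq:verif_ineq} stay meaningful as $n\to\infty$ and $t\to\infty$. Everything else is routine.
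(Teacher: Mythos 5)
Your proposal is correct and follows essentially the same route as the paper's proof: apply the It\^o formula to $e^{-\beta t}u(\xi_t)$, bound the drift via the HJB inequality, localise, and pass to the limit using \eqref{eqn4} for the boundary term and \eqref{eqn9} for the cost integral, with equality along the candidate optimiser. The only (immaterial) difference is the order of the limits $n\to\infty$ and $t\to\infty$ and the explicit choice of localising sequence.
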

\begin{proof}
Fix an admissible control $(\xi,\rho)$ of \eqref{eq_MVM_SDE} with $\xi_0=\mu$. Define
$$\tau_n:=\inf\left\{t\geq0\colon\int_0^t  |Lu(\xi_s,\rho_s)|\ds> n\right\}$$
and note that an application of the It\^o formula yields
\begin{align*}
u(\mu)&=\int_0^t(\beta u(\xi_s)-Lu(\xi_s,\rho_s))e^{-\beta s} \ds
+e^{-\beta t}u(\xi_t)\\
&\qquad-\int_0^t \int_{\R^d} \frac{\partial u}{\partial \mu}(x,\xi_s)e^{-\beta s}(\rho_s(x)-\xi_s(\rho_s))\xi_s(\dx) \d W_s.
\end{align*}
Using $(\tau_n)_n$ as localising sequence we obtain
 $$u(\mu)=\E[e^{-\beta (t\land \tau_n)}u(\xi_{t\land \tau_n})]+\int_0^t\E[(\beta u(\xi_{s})-Lu(\xi_s,\rho_s))\bm1_{\{s\leq \tau_n\}}e^{-\beta s}] \ds,$$
which sending $t\to\infty$ yields 
 $$u(\mu)= \E[e^{-\beta  \tau_n}u(\xi_{ \tau_n})]+\int_0^\infty\E[(\beta u(\xi_{s})-Lu(\xi_s,\rho_s))\bm1_{\{s\leq \tau_n\}}e^{-\beta s}] \ds.$$
Using that $u$ satisfies \eqref{eq_HJB} we obtain
 \begin{equation}\label{eqn7}
 u(\mu)\leq \int_0^\infty\E[c(\xi_s,\rho_s)\bm1_{\{s\leq \tau_n\}}e^{-\beta s}] \ds+
\E[ u(\xi_{\tau_n})e^{-\beta\tau_n}] .
\end{equation}
Since $c$ satisfies \eqref{eqn9}, $u$ satisfies \eqref{eqn4}, and $\tau_n$ increases to infinity, the dominated convergence theorem and the monotone convergence theorem yield
 $$u(\mu)\leq \int_0^\infty\E[c(\xi_s,\rho_s)e^{-\beta s}] \ds.$$
 Since $(\xi,\rho)$ was arbitrary, we can conclude that $u(\mu)\leq v(\mu)$. Using that the inequality in \eqref{eqn7} holds with equality for $(\xi^*,\rho^*)$, the second claim follows as well.
\end{proof}

%
%
%
%
%
%
%
%

\section{Viscosity subsolution property}
\label{sec:visc-subs-prop}

\begin{theorem}\label{thm2}
Assume that conditions \ref{itii0}-\ref{itiii} of Theorem~\ref{thetheorem} are satisfied. Then the value function is a viscosity subsolution of \eqref{eq_HJB}.
%
\end{theorem}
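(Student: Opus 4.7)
The plan is to argue by contradiction, combining the dynamic programming principle (Theorem~\ref{thm_dpp_main_text}) with It\^o's formula (Theorem~\ref{T_Ito}). Fix $\bar\mu\in\Pcal_p$ and a test function $f\in C^2(\Pcal_q)$ with $f(\bar\mu)=\limsup_{\mu\to\bar\mu,\,\mu\preceq\bar\mu}v(\mu)$ and $f\ge v$ on $D_{\bar\mu}$, and suppose towards a contradiction that $\liminf_{\mu\to\bar\mu,\,\mu\preceq\bar\mu}H(\mu;f)\ge 3\delta$ for some $\delta>0$. Then there is an open neighborhood $U$ of $\bar\mu$ in $\Pcal_p$ with $H(\mu;f)\ge 3\delta$ on $U\cap D_{\bar\mu}$. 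By assumption \ref{itiii} the supremum defining $H(\bar\mu;f)$ can be approximated by elements of $\Hb_c$, so we can pick $\rho^*\in\Hb_c$ with $\beta f(\bar\mu)-c(\bar\mu,\rho^*)-Lf(\bar\mu,\rho^*)\ge 2\delta$. Since $\rho^*\in C_c({\R^d})$ is bounded, the map $\mu\mapsto Lf(\mu,\rho^*)$ is continuous on $\Pcal_p$, and $\mu\mapsto c(\mu,\rho^*)$ is upper semi-continuous by \ref{itii}; shrinking to an open $V\subseteq U$ containing $\bar\mu$ then yields
\[
\beta f(\mu)-c(\mu,\rho^*)-Lf(\mu,\rho^*)\ge\delta \qquad\text{for all }\mu\in V\cap D_{\bar\mu}.
\]

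Now pick a sequence $\mu_n\in D_{\bar\mu}\cap V$ with $\mu_n\to\bar\mu$ and $v(\mu_n)\to f(\bar\mu)$, and use Theorem~\ref{thm5} to construct weak solutions $(\xi^n,\rho^n)$ of \eqref{eq_MVM_SDE} with $\xi^n_0=\mu_n$ and $\rho^n\equiv\rho^*$; these are admissible controls because $\rho^*$ is bounded and compactly supported, making \eqref{eqn3} trivial. Set $\tau_n=T\wedge\inf\{t\ge 0:\xi^n_t\notin V\}$ for a fixed small $T>0$. The dynamic programming principle applied to this specific control gives
\[
v(\mu_n)\le \E\!\left[\int_0^{\tau_n}e^{-\beta s}c(\xi^n_s,\rho^*)\,\ds+e^{-\beta\tau_n}v(\xi^n_{\tau_n})\right],
\]
while It\^o's formula applied to $e^{-\beta t}f(\xi^n_t)$, after the usual localisation of the stochastic-integral part and invocation of the pointwise bound above on $V\cap D_{\bar\mu}$, yields
\[
\E\!\left[e^{-\beta\tau_n}f(\xi^n_{\tau_n})+\int_0^{\tau_n}e^{-\beta s}c(\xi^n_s,\rho^*)\,\ds\right]\le f(\mu_n)-\delta\,\E\!\left[\int_0^{\tau_n}e^{-\beta s}\,\ds\right].
\]
Since $\xi^n_{\tau_n}\preceq\mu_n\preceq\bar\mu$ by Remark~\ref{rem:MVMCompact}\ref{it2iii} and $f\ge v$ on $D_{\bar\mu}$, substituting the second display into the first produces
\[
v(\mu_n)\le f(\mu_n)-\delta\,\E\!\left[\int_0^{\tau_n}e^{-\beta s}\,\ds\right].
\]

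The main obstacle is then the passage to the limit: since $v(\mu_n)\to f(\bar\mu)$ by choice and $f(\mu_n)\to f(\bar\mu)$ by continuity of $f$ on $\Pcal_p$, a contradiction emerges once we establish $\liminf_n\E[\int_0^{\tau_n}e^{-\beta s}\,\ds]>0$; equivalently, that the exit times $\tau_n$ do not collapse to zero as $\mu_n\to\bar\mu$. My intended approach is to exploit the explicit Girsanov-type construction used to prove Theorem~\ref{thm5}, which realises every $\xi^n$ on a common canonical Wiener space via $\xi^n_t(\dx)=Z^n_t(x)^{-1}\Ec(\rho^*(x)X)_t\mu_n(\dx)$ with a single driving Brownian motion $X$. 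Because $\rho^*$ is bounded and compactly supported, this representation depends stably on the initial data, and $\mu_n\to\bar\mu$ in $\Pcal_p$ together with path continuity should give uniform-on-$[0,T]$ convergence in probability of $\xi^n$ to the MVM $\xi$ started at $\bar\mu$ under $\rho^*$; lower semi-continuity of the exit time from the open set $V$ on the path space together with $\P(\tau(\xi)>0)=1$ then yields $\P(\tau_n\ge\eta)\ge p$ for some $\eta,p>0$ and all $n$ large, providing the required uniform bound. Letting $n\to\infty$ produces $f(\bar\mu)\le f(\bar\mu)-\delta\cdot(\text{strictly positive constant})$, the desired contradiction. This stability/tightness statement for the MVM flow is the genuinely delicate technical ingredient; an alternative route is to keep $T$ small and split the integrand into its contributions on $\{\tau_n\ge\eta\}$ and $\{\tau_n<\eta\}$, controlling the latter via the compact localisation of Remark~\ref{rem:MVMCompact}\ref{page:mvm_localise_compact} together with the growth bound \ref{itii0}.
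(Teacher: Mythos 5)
Your overall architecture (contradiction, constant control from Theorem~\ref{thm5}, It\^o's formula, DPP) matches the paper's, and everything up to the inequality $v(\mu_n)\le f(\mu_n)-\delta\,\E[\int_0^{\tau_n}e^{-\beta s}\ds]$ is sound. The genuine gap is exactly the step you flag yourself: you need $\liminf_n\E[\int_0^{\tau_n}e^{-\beta s}\ds]>0$, and the stability argument you sketch for it is not available. The weak solutions of Theorem~\ref{thm5} with initial data $\mu_n$ live under $n$-dependent measures $\P_n$ obtained by Girsanov changes whose densities depend on $\mu_n$; the paper develops no uniqueness or continuity-in-initial-condition theory for \eqref{eq_MVM_SDE}; locally uniform convergence of $\xi^n$ to $\xi$ \emph{in $\Pcal_p$} requires convergence of $p$-th moments along paths, not just weak convergence of the normalised densities; and only after all of that would lower semicontinuity of the exit time deliver the bound. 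As written, the proof does not close.

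The paper avoids this difficulty entirely, and this is the idea you are missing. First, Lemma~\ref{L_viscsol_strict} (which needs assumption \ref{itii0}) reduces to test functions with $f>\hat v$ \emph{strictly} on $D_{\bar\mu}\setminus\{\bar\mu\}$. Second, the stopping time is taken as $\tau=\inf\{t\colon \xi_t\notin U \text{ or } \xi_t(\varphi)\ge 2a\}\wedge 1$, where $\varphi$ is a de~la~Vall\'ee-Poussin function with $a=\sup_n\mu_n(\varphi)<\infty$, so that $\xi_{t\wedge\tau}$ stays in a compact subset $K_\varphi$ of $D_{\bar\mu}$. Then one argues a dichotomy: either $\tau=1$, in which case the running term contributes $\kappa e^{-\beta}$; or $\tau<1$ and $\xi_\tau(\varphi)<2a$, in which case $\xi_\tau$ lies in the compact set $K_\varphi\setminus U$ not containing $\bar\mu$, where $\varepsilon:=\inf(f-\hat v)>0$ by strictness and lower semicontinuity, so $f(\xi_\tau)\ge v(\xi_\tau)+\varepsilon$. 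The only remaining event is $\{\xi_\tau(\varphi)\ge 2a\}$, whose probability is at most $1/2$ by the martingale property of $\xi(\varphi)$ and Markov's inequality. This yields the uniform gap $f(\mu_n)\ge v(\mu_n)+\tfrac12(\varepsilon\wedge\kappa)e^{-\beta}$ with no lower bound on exit times and no stability of the flow needed. To repair your proof, replace your limiting argument by this mechanism: invoke Lemma~\ref{L_viscsol_strict}, add the moment localisation $\xi_t(\varphi)\ge 2a$ to your stopping time (this also cleanly justifies the supermartingale property you gloss over), and extract the contradiction from the strict positivity of $f-\hat v$ off $\bar\mu$ rather than from non-degeneracy of $\tau_n$.
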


\begin{proof}
Note first that for $\bar\mu\in\Pcal^s$, the subsolution property reduces to $\beta f(\bar\mu)\le\inf_{\rho\in \mathbb H}  c(\bar\mu,\rho)$, for all $f\in C^2(\Pcal_q)$ with $f(\bar\mu) = v(\bar\mu)$. If $v(\bar \mu)$ if infinite, this is vacuously satisfied. If $v(\bar \mu)$ is finite, this follows from the definition \eqref{eq_value_function} of $v$. For $\bar\mu\in\Pcal_p\setminus\Pcal^s$ we argue by contradiction, and suppose the viscosity subsolution property fails. Then, by conditions~\ref{itii0}, \ref{itiii} and Lemma~\ref{L_viscsol_strict}, there exist $f\in C^2(\Pcal_q)$ such that
\[
\text{$f(\bar\mu) = \hat v(\bar\mu)$ and $f(\mu) > \hat v(\mu)$ for all $\mu\in D_{\bar\mu}\setminus\{\bar\mu\}$}
\]
and
\[
\check H(\bar\mu; f) > 0,
\]
where $D_{\bar\mu}$ is given by \eqref{eq_Dmu}, $\hat v = (v|_{D_{\bar\mu}})^*$, and $\check H(\fdot; f) = (H(\fdot;f)|_{D_{\bar\mu}})_*$ with $H(\fdot;f)$ given by \eqref{eq_Hmuf}. In particular, we have $H(\bar\mu;f) > 0$. Therefore, due to condition \ref{itiii}, there exist $\bar\rho\in \Hb\cap C_c({\R^d})$ and $\kappa>0$ such that
\begin{equation}\label{eq_ineq1_subsol}
\beta f(\bar\mu) - c(\bar\mu,\bar\rho) - Lf(\bar\mu,\bar\rho) > \kappa.
\end{equation}
Define the set
\[
U = \{\mu\in\Pcal_p\setminus\Pcal^s \colon  \beta f(\mu) - c(\mu,\bar\rho) - Lf(\mu,\bar\rho) > \kappa \}.
\]
Thanks to \eqref{eq_ineq1_subsol} and since $f$ and $Lf(\fdot,\bar\rho)$ are continuous and $c(\fdot,\bar\rho)$ is upper semi-continuous by condition~\ref{itii}, the set $U$ is an open neighbourhood of $\bar\mu$.

Choose measures $\mu_n\in\Pcal_p$ with $\mu_n\to\bar\mu$, $\mu_n\preceq\bar\mu$, and $v(\mu_n)\to \hat v(\bar\mu)$. By discarding finitely many of the $\mu_n$, we may assume that $\mu_n\in U$ for all $n$. Since they form a convergent sequence, the $\mu_n$ together with their limit $\bar\mu$ form a compact subset of $\Pcal_p$. 
  Remark~\ref{rem:MVMCompact}\ref{dlvp} (De~la~Vall\'ee-Poussin) then
 gives the existence of a  measurable function $\varphi:\R^d\to\R_+$ such that
 \begin{equation}\label{eq_subsol_a_def}
a = \sup_n \mu_n(\varphi) \in (0,\infty)
\end{equation}
 and the set
$
K^\varphi_{2a} 
$
defined in \eqref{eqn5} is a compact subset of $\Pcal_p$ containing both $\mu_n$ and $\bar\mu$. Since $D_{\bar\mu}$ is closed in $\Pcal_p$, the set $K_\varphi:=K^\varphi_{2a} \cap D_{\bar\mu}$ is a compact subset of $D_{\bar\mu}$.

%

Fix $n$, and let $(\xi,\rho)$ be an admissible control with $\xi_0=\mu_n$ and $\rho_t\equiv\bar\rho$ (constant in time); this exists by Theorem~\ref{thm5} and satisfies \eqref{eqn3} because $\bar\rho$ belongs to $C_c({\R^d})$. Define the stopping time
\[
\tau = \inf\{ t\ge0\colon \text{$\xi_t \notin U$ or $\xi_t(\varphi) \ge 2a$}\} \wedge 1.
\]
Using the It\^o formula, we get that
\[
f(\xi_{t\wedge\tau}) - f(\mu_n) - \int_0^{t\wedge\tau} Lf(\xi_s,\bar\rho) \ds
\]
is a local martingale, and then so is
\begin{equation}\label{eq_subsol_supermg}
e^{-\beta t\wedge\tau} f(\xi_{t\wedge\tau}) - f(\mu_n) - \int_0^{t\wedge\tau} e^{-\beta s} ( Lf(\xi_s,\bar\rho) - \beta f(\xi_s)) \ds.
\end{equation}
In fact, \eqref{eq_subsol_supermg} is a supermartingale because it is bounded from below. To see this, note that $\xi_s \in U$ for all $s<\tau$, and that $\tau\le 1$. Therefore,
\begin{equation}\label{eq_subsol_bound2}
\begin{aligned}
e^{-\beta (t\wedge\tau)} & f(\xi_{t\wedge\tau})  - \int_0^{t\wedge\tau} e^{-\beta s} ( Lf(\xi_s,\bar\rho) - \beta f(\xi_s)) \ds \\
&\ge e^{-\beta ( t\wedge\tau)} f(\xi_{t\wedge\tau}) + \int_0^{t\wedge\tau} e^{-\beta s}c(\xi_s,\bar\rho) \ds + \kappa e^{-\beta} (t\wedge\tau).
\end{aligned}
\end{equation}
Since $\mu_n\preceq\bar\mu$, and since MVMs are decreasing with respect to $\preceq$, the process $\xi_{t\wedge\tau}$ takes values in the compact set $K_\varphi$. The right-hand side of \eqref{eq_subsol_bound2} is therefore bounded below by $\min(0,\inf_{\mu\in K_\varphi} f(\mu)) - \int_0^\infty e^{-\beta s} c(\xi_s,\bar{\rho})_- \ds$, where the second term is integrable by \eqref{eqn9}. This shows that \eqref{eq_subsol_supermg} is bounded from below and hence a supermartingale, as claimed.

The supermartingale property of \eqref{eq_subsol_supermg} and the inequality \eqref{eq_subsol_bound2} give
\begin{equation}\label{eq_subsol_bound3}
\begin{aligned}
f(\mu_n) &\ge \E\left[ e^{-\beta \tau} f(\xi_{\tau}) - \int_0^{\tau} e^{-\beta s} ( Lf(\xi_s,\bar\rho) - \beta f(\xi_s)) \ds \right] \\
&\ge \E\left[ e^{-\beta \tau} f(\xi_\tau) + \int_0^\tau e^{-\beta s} c(\xi_s,\bar\rho) \ds + \kappa e^{-\beta}  \tau \right].
\end{aligned}
\end{equation}
The definition of $\tau$ and the fact that $\xi_\tau\preceq\bar\mu$ imply that $\xi_\tau\in K_\varphi\setminus U$ on the event $A=\{\tau<1\}\cap\{\xi_\tau(\varphi) < 2 a\}$. Since $K_\varphi\setminus U$ is compact in $D_{\bar\mu}$ (and possibly empty, but then so is $A$) and does not contain $\bar\mu$, and since $f-\hat v$ is lower semicontinuous on $D_{\bar\mu}$, nonnegative, and zero only at $\bar\mu$, it follows that the quantity
\[
\varepsilon=\inf_{\mu\in K_\varphi\setminus U}(f-\hat v)(\mu)
\]
is strictly positive (infinite if $K_\varphi\setminus U$ is empty). We thus have
\[
\text{$f(\xi_\tau)\ge \hat v(\xi_\tau)+\varepsilon \ge v(\xi_\tau)+\varepsilon$ on $A$.}
\]
Moreover, $f(\mu)\ge v(\mu)$ for all $\mu\preceq\bar\mu$. Therefore, using again that $\xi_\tau\preceq\bar\mu$, we get
\begin{equation}\label{eq_subsol_bound4}
\begin{aligned}
e^{-\beta \tau}f(\xi_\tau) + \kappa e^{-\beta} \tau &\ge e^{-\beta \tau}v(\xi_\tau) + \varepsilon e^{-\beta} \bm 1_A + \kappa e^{-\beta} \bm1_{\{\tau=1\}} \\
&\ge  e^{-\beta \tau}v(\xi_\tau) + (\varepsilon\wedge\kappa) e^{-\beta} \bm1_{\{\xi_\tau(\varphi) < 2 a\}}.
\end{aligned}
\end{equation}
Combining \eqref{eq_subsol_bound3} and \eqref{eq_subsol_bound4} yields
\begin{equation}\label{eq_subsol_bound5}
\begin{aligned}
f(\mu_n) &\ge \E\left[ e^{-\beta \tau} v(\xi_\tau) + \int_0^\tau e^{-\beta s} c(\xi_s,\bar\rho) \ds \right] \\
&\qquad\qquad\qquad + (\varepsilon \wedge \kappa)e^{-\beta} \P(\xi_\tau(\varphi) < 2 a).
\end{aligned}
\end{equation}
Using Markov's inequality, the stopping theorem along with the fact that $\xi(\varphi)$ is a continuous martingale, and the choice of the constant $a$ in \eqref{eq_subsol_a_def}, we get
\[
\P(\xi_\tau(\varphi) \ge 2a) \le \frac{1}{2a} \E[\xi_\tau(\varphi)] = \frac{1}{2a} \mu_n(\varphi) \le \frac12.
\]
Combining this with \eqref{eq_subsol_bound5} and the dynamic programming principle (Theorem~\ref{thm_dpp_main_text}), we obtain
\[
f(\mu_n) \ge v(\mu_n) + \frac{\varepsilon \wedge \kappa}{2}e^{-\beta}.
\]
This holds for all $n$. Sending $n$ to infinity yields $\hat v(\bar\mu)\ge \hat v(\bar\mu) + \frac12(\varepsilon \wedge \kappa)e^{-\beta}$, which is the required contradiction.
\end{proof}

\section{Viscosity supersolution property}
\label{sec:visc-supers-prop}

\begin{theorem}\label{thm3}
Assume that conditions \ref{itii0} and \ref{itiii} of Theorem~\ref{thetheorem} are satisfied. Then the value function is a viscosity supersolution of \eqref{eq_HJB}.
%
%
%
\end{theorem}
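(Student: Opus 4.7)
The plan is to argue by contradiction in a manner structurally parallel to the subsolution proof of Theorem~\ref{thm2}, but using the dynamic programming principle (Theorem~\ref{thm_dpp_main_text}) instead of Theorem~\ref{thm5} to supply the relevant controls. The case $\bar\mu\in\Pcal^s$ is again trivial (the supersolution inequality reduces, when $v(\bar\mu)$ is finite, to $\beta f(\bar\mu)\ge \inf_{\rho\in\Hb}c(\bar\mu,\rho)$, which follows from the definition of $v$). So I would fix $\bar\mu\in\Pcal_p\setminus\Pcal^s$ and, assuming the property fails, invoke Lemma~\ref{L_viscsol_strict}\ref{L_viscsol_strict_2} together with condition \ref{itiii} to extract $f\in C^2(\Pcal_q)$ satisfying $f(\bar\mu)=\check v(\bar\mu)$ and $f(\mu)<\check v(\mu)$ for $\mu\in D_{\bar\mu}\setminus\{\bar\mu\}$, and $\hat H(\bar\mu;f)<0$. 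Since the supremum in $H(\fdot;f)$ is over all of $\Hb$, this inequality can be read as a \emph{uniform} bound: there exist $\kappa>0$ and an open neighbourhood $U$ of $\bar\mu$ in $\Pcal_p$ such that for every $\mu\in U\cap D_{\bar\mu}$ and \emph{every} $\rho\in\Hb$,
\[
\beta f(\mu)-c(\mu,\rho)-Lf(\mu,\rho)\le -\kappa.
\]

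Next I would pick measures $\mu_n\to\bar\mu$ with $\mu_n\preceq\bar\mu$, $\mu_n\in U$, and $v(\mu_n)\to \check v(\bar\mu)=f(\bar\mu)$, and use Remark~\ref{rem:MVMCompact}\ref{dlvp} to fix a measurable $\varphi:{\R^d}\to\R_+$ with $a:=\sup_n\mu_n(\varphi)<\infty$ so that $K_\varphi:=K^\varphi_{2a}\cap D_{\bar\mu}$ is a compact subset of $D_{\bar\mu}$ containing $\bar\mu$ and all $\mu_n$. For each $n$ and each $\varepsilon>0$, the dynamic programming principle with the bounded stopping time
\[
\tau(\xi)=\inf\{t\ge0:\xi_t\notin U\text{ or }\xi_t(\varphi)\ge 2a\}\wedge 1
\]
(which is a stopping time on $C(\R_+,\Pcal_p)$ since $U$ is open and $\xi\mapsto \xi(\varphi)$ is measurable) yields an admissible control $(\xi^n,\rho^n)$ with $\xi^n_0=\mu_n$ such that, writing $\tau_n=\tau(\xi^n)$,
\[
v(\mu_n)+\varepsilon\ge \E\!\left[e^{-\beta\tau_n}v(\xi^n_{\tau_n})+\int_0^{\tau_n}e^{-\beta s}c(\xi^n_s,\rho^n_s)\,\ds\right].
\]

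The It\^o formula from Theorem~\ref{T_Ito}, applied to $e^{-\beta t}f(\xi^n_t)$ and localised by the same device as in the subsolution proof (using Lemma~\ref{L_lin_fcn} and Remark~\ref{rem:MVMCompact}\ref{page:mvm_localise_compact} to keep $\xi^n_{t\wedge\tau_n}$ inside a compact subset of $\Pcal_p$), produces a local martingale whose bounded-below part yields
\[
f(\mu_n)\le \E\!\left[e^{-\beta\tau_n}f(\xi^n_{\tau_n})-\int_0^{\tau_n}e^{-\beta s}\bigl(Lf(\xi^n_s,\rho^n_s)-\beta f(\xi^n_s)\bigr)\ds\right].
\]
On $\{s<\tau_n\}$ the uniform bound above gives $Lf(\xi^n_s,\rho^n_s)-\beta f(\xi^n_s)\ge \kappa-c(\xi^n_s,\rho^n_s)$, so
\[
f(\mu_n)\le \E\!\left[e^{-\beta\tau_n}f(\xi^n_{\tau_n})+\int_0^{\tau_n}e^{-\beta s}c(\xi^n_s,\rho^n_s)\,\ds\right]-\kappa\,e^{-\beta}\E[\tau_n].
\]
As in the subsolution proof, $\xi^n_{\tau_n}\preceq\bar\mu$ and lies in $K_\varphi\setminus U$ on the event $A_n=\{\tau_n<1\}\cap\{\xi^n_{\tau_n}(\varphi)<2a\}$; since $K_\varphi\setminus U$ is a compact subset of $D_{\bar\mu}\setminus\{\bar\mu\}$ on which the lower semicontinuous function $\check v-f$ is strictly positive, we obtain $\varepsilon':=\inf_{K_\varphi\setminus U}(\check v-f)>0$, hence $f(\xi^n_{\tau_n})\le v(\xi^n_{\tau_n})-\varepsilon'\mathbf{1}_{A_n}$. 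Markov's inequality combined with the martingale property of $\xi(\varphi)$ yields $\P(\xi^n_{\tau_n}(\varphi)\ge 2a)\le 1/2$; either $\P(A_n)\ge 1/2$, or $\P(\tau_n=1)\ge 1/2$ and the $-\kappa e^{-\beta}\E[\tau_n]$ term delivers the slack. Combining the two displayed estimates with the dynamic programming choice above,
\[
f(\mu_n)\le v(\mu_n)+\varepsilon-\tfrac12(\varepsilon'\wedge\kappa)e^{-\beta},
\]
and sending first $\varepsilon\downarrow 0$ and then $n\to\infty$ gives $f(\bar\mu)\le f(\bar\mu)-\tfrac12(\varepsilon'\wedge\kappa)e^{-\beta}$, the required contradiction.

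The main obstacle, I expect, will not be the overall scheme, which essentially mirrors the subsolution argument, but rather the careful verification that the stopping time $\tau$ is measurable on $C(\R_+,\Pcal_p)$ (so that DPP applies), that the localisation of Theorem~\ref{T_Ito} combines cleanly with the lower bound on $c(\xi^n_s,\rho^n_s)_-$ from \eqref{eqn9} to validate the supermartingale-style inequality above, and that the uniform compact containment $\xi^n_{t\wedge\tau_n}\in K_\varphi$ together with continuity of $f$ on $K_\varphi$ permits passing to expectation without any additional hypothesis on $c$ besides \ref{itii0} and \ref{itiii}. These are the same delicate points handled in the subsolution case, and they should yield to the same techniques.
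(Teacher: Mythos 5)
Your proposal follows essentially the same route as the paper's proof: the same reduction via Lemma~\ref{L_viscsol_strict}\ref{L_viscsol_strict_2}, the same uniform bound $\beta f(\mu)-c(\mu,\rho)-Lf(\mu,\rho)\le-\kappa$ for all $\rho\in\Hb$ on a relative neighbourhood $U$ of $\bar\mu$ in $D_{\bar\mu}$, the same compact localisation $K_\varphi$, the same stopping time, and the same slack argument via $\varepsilon'=\inf_{K_\varphi\setminus U}(\check v-f)$ and $\P(\xi_\tau(\varphi)<2a)\ge\tfrac12$; taking an $\varepsilon$-optimal control from the DPP up front is equivalent to the paper's choice of working with an arbitrary control and infimizing at the end.

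The one step whose justification is wrong as stated is the passage from the It\^o formula to $f(\mu_n)\le\E[\,\cdot\,]$. You invoke a ``bounded-below'' local martingale and the control of $c_-$ from \eqref{eqn9}; that is the mechanism for the \emph{sub}solution proof, where a local martingale bounded below is a supermartingale and one obtains $f(\mu_n)\ge\E[\,\cdot\,]$. Here you need the opposite inequality, so the local martingale must be a \emph{sub}martingale, which requires it to be bounded \emph{above} by an integrable random variable; the upper bound involves $\int_0^1 e^{-\beta s}c(\xi_s,\rho_s)\,\ds$, so what is needed is integrability of $\int_0^1 (c(\xi_s,\rho_s))_+\,\ds$ along the chosen control. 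The paper secures this by restricting to controls with this integrability (such controls exist because $v(\mu_n)<\infty$ for large $n$, and discarding the others does not change the infimum in the DPP). Your choice of an $\varepsilon$-optimal control in fact supplies this automatically, since $\E[\int_0^\infty e^{-\beta t}c\,\dt]\le v(\mu_n)+\varepsilon<\infty$ together with \eqref{eqn9} gives integrability of the positive part; but you should state that this, and not \eqref{eqn9} alone, is what validates the inequality. A second, minor imprecision: the dichotomy ``either $\P(A_n)\ge\tfrac12$ or $\P(\tau_n=1)\ge\tfrac12$'' is not literally correct; the argument is pathwise, namely that on the event $\{\xi_{\tau_n}(\varphi)<2a\}$ (of probability at least $\tfrac12$) one gains $(\varepsilon'\wedge\kappa)e^{-\beta}$ whether $\tau_n<1$ or $\tau_n=1$.
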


\begin{proof}

Note first that for $\bar\mu\in\Pcal^s$, the subsolution property reduces to $\beta f(\bar\mu)\ge\inf_{\rho\in \mathbb H}  c(\bar\mu,\rho)$, for all $f\in C^2(\Pcal_q)$ with $f(\bar\mu) = v(\bar\mu)$. If $v(\bar \mu)$ if infinite, this is vacuously satisfied. If $v(\bar \mu)$ is finite, this follows from the definition \eqref{eq_value_function} of $v$. For $\bar\mu\in\Pcal_p\setminus\Pcal^s$ we argue by contradiction, and suppose the viscosity supersolution property fails. Then, by conditions~\ref{itii0}, \ref{itiii} and Lemma~\ref{L_viscsol_strict}, there exist $f\in C^2(\Pcal_q)$ such that
\[
\text{$f(\bar\mu) = \check v(\bar\mu)$ and $f(\mu) < \check v(\mu)$ for all $\mu\in D_{\bar\mu}\setminus\{\bar\mu\}$}
\]
and, for some $\kappa>0$,
\[
\hat H(\bar\mu; f) < -\kappa,
\]
where $D_{\bar\mu}$ is given by \eqref{eq_Dmu}, $\check v = (v|_{D_{\bar\mu}})_*$, and $\hat H(\fdot; f) = (H(\fdot;f)|_{D_{\bar\mu}})^*$ with $H(\fdot;f)$ given by \eqref{eq_Hmuf}. Define the set
\[
U = \{\mu\in D_{\bar\mu}\setminus\Pcal^s\colon \hat H(\mu; f) < -\kappa \}.
\]
This is an open neighborhood of $\bar\mu$ in $D_{\bar\mu}$ since $\hat H(\fdot; f)$ is upper semicontinuous on $D_{\bar\mu}$. The inequality $\hat H(\fdot; f)\ge H(\fdot;f)$ on $D_{\bar\mu}$ and the definition of $H$ imply that
\begin{equation}\label{eq_supersol_bound1}
\text{$ \beta f(\mu)-c(\mu,\rho) - Lf(\mu,\rho) < -\kappa$ for all $\mu\in U$ and all $\rho\in\Hb$.}
\end{equation}

Choose measures $\mu_n\in U$, $n\in\Nb$, with $\mu_n\to\bar\mu$ and $v(\mu_n)\to \check v(\bar\mu)$. As in the proof of the subsolution property, 
  Remark~\ref{rem:MVMCompact}\ref{dlvp} (De~la~Vall\'ee-Poussin) then
 gives the existence of a  measurable function $\varphi:\R^d\to\R_+$ such that
$$
a = \sup_n \mu_n(\varphi) \in (0,\infty)
$$
 and the set
$
K_\varphi:=K^\varphi_{2a} \cap D_{\bar\mu}
$
for $K^\varphi_{2a}$ as in \eqref{eqn5}  is a compact subset of $D_{\bar\mu}$ containing both $\mu_n$ and $\bar\mu$.

%

Fix $n\in\Nb$, and let $(\xi,\rho)$ be an arbitrary admissible control with $\xi_0=\mu_n$ and such that $\int_0^1 \left(c(\xi_s, \rho_s)\right)_+ \ds$ is integrable; in particular, \eqref{eqn3} is satisfied. Such controls exist since by assumption $v(\mu_n) < \infty$ for sufficiently large $n$. Define the stopping time
\[
\tau = \inf\{ t\ge0\colon \text{$\xi_t \notin U$ or $\xi_t(\varphi) \ge 2a$}\} \wedge 1.
\]
Using the It\^o formula, we get that
\begin{equation}\label{eq_supersol_submg}
e^{-\beta t\wedge\tau} f(\xi_{t\wedge\tau}) - f(\mu_n) - \int_0^{t\wedge\tau} e^{-\beta s} ( Lf(\xi_s,\rho_s) - \beta f(\xi_s)) \ds
\end{equation}
is a local martingale.  
In fact, \eqref{eq_supersol_submg} is a submartingale because it is bounded from above by an integrable random variable. To see this, note that $\xi_s \in U$ for all $s<\tau$ and that $\tau\le 1$. Therefore, due to \eqref{eq_supersol_bound1},
\begin{equation}\label{eq_supersol_bound2}
\begin{aligned}
e^{-\beta t\wedge\tau} & f(\xi_{t\wedge\tau})  - \int_0^{t\wedge\tau} e^{-\beta s} ( Lf(\xi_s,\rho_s) - \beta f(\xi_s)) \ds \\
&\le e^{-\beta t\wedge\tau} f(\xi_{t\wedge\tau}) + \int_0^{t\wedge\tau} e^{-\beta s}c(\xi_s,\rho_s) \ds - \kappa e^{-\beta} (t\wedge\tau).
\end{aligned}
\end{equation}
Since $\xi_{t\wedge\tau}$ takes values in the compact set $K_\varphi$, the right-hand side is bounded above by 
\begin{equation*}
\max(0,\sup_{\mu\in K_\varphi} f(\mu)) + \int_0^1 \left(c(\xi_s, \rho_s)\right)_+ \ds.
\end{equation*} 
The first term is finite since $K_\varphi$ is compact and $f$ is continuous, and the second term is finite in expectation by our assumption on the chosen control. This shows that \eqref{eq_supersol_submg} is a submartingale, as claimed.

The submartingale property of \eqref{eq_supersol_submg} and the inequality \eqref{eq_supersol_bound2} give
\begin{equation}\label{eq_supersol_bound3}
\begin{aligned}
f(\mu_n) &\le \E\left[ e^{-\beta \tau} f(\xi_{\tau})  - \int_0^{\tau} e^{-\beta s} ( Lf(\xi_s,\rho_s) - \beta f(\xi_s)) \ds \right] \\
&\le \E\left[  e^{-\beta \tau} f(\xi_\tau) + \int_0^\tau e^{-\beta s} c(\xi_s,\rho_s) \ds - \kappa e^{-\beta} \tau \right].
\end{aligned}
\end{equation}
Moreover, the same reasoning that lead to \eqref{eq_subsol_bound4}, but now using lower semicontinuity on $D_{\bar\mu}$ of $\check v-f$, gives
\begin{equation}\label{eq_supersol_bound6}
e^{-\beta \tau}f(\xi_\tau) - \kappa e^{-\beta} \tau \le e^{-\beta \tau} v(\xi_\tau) - (\varepsilon\wedge\kappa)e^{-\beta}\bm1_{\{\xi_\tau(\varphi) < 2a\}}
\end{equation}
where
\[
\varepsilon = \inf_{\mu\in K_\varphi\setminus U} (\check v-f)(\mu) \in (0,\infty].
\]
We also have, as before, the bound $\P(\xi_\tau(\varphi) < 2a)\ge\frac12$. Combining this with \eqref{eq_supersol_bound3} and \eqref{eq_supersol_bound6} yields
\[
f(\mu_n) \le \E\left[ e^{-\beta \tau} v(\xi_\tau) + \int_0^\tau e^{-\beta s} c(\xi_s,\rho_s) \ds \right] - \frac{\varepsilon\wedge\kappa}{2}e^{-\beta}.
\]
Taking the infimum over all admissible controls $(\xi,\rho)$ with $\xi_0=\mu_n$, and using the dynamic programming principle (Theorem~\ref{thm_dpp_main_text}), we obtain
\[
f(\mu_n) \le v(\mu_n) - \frac{\varepsilon\wedge\kappa}{2}e^{-\beta}.
\]
This holds for all $n$. Sending $n$ to infinity yields $\check v(\bar\mu)\le \check v(\bar\mu) - \frac12(\varepsilon \wedge \kappa)e^{-\beta}$, which is the required contradiction.
\end{proof}

\begin{remark}
An inspection of the proof shows that the assumptions of Theorem~\ref{thm3} can be relaxed to the assumptions of    Lemma~\ref{L_viscsol_strict}.
\end{remark}

\section{Comparison principle}
\label{sec:comparison-principle}

\begin{theorem}\label{T_comparison}
Let $\beta > 0$, and suppose that the cost function $c$ and the action space $\Hb$ satisfy the following conditions:
\begin{enumerate}
\item\label{T_comparison_1} $\mu\mapsto c(\mu,\rho)$ is continuous on $\Pc(\{x_1,...,x_N\})$ uniformly in $\rho\in\Hb$ for any $N\in\Nb$ and $x_1,...,x_N\in {\R^d}$;
\item\label{T_comparison_2} the set $\{\rho(x) - \rho(0) \colon \rho \in \Hb\}$ is a bounded subset of $\R^d$ for every $x \in {\R^d}$.
\end{enumerate}
Let $u,v\in C(\Pcal_p)$ be a viscosity sub- and supersolution of \eqref{eq_HJB}, respectively, for some $q \in [1,p] \cup \{0\}$. Then $u\le v$ on $\Pcal_p$.
\end{theorem}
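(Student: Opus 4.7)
My strategy is to reduce the comparison to finite-dimensional viscosity theory on standard simplices, via induction on the size of the support of the starting measure. Since $u,v$ are continuous on $\Pcal_p$ and the operators $T_n^*$ of Section~\ref{sec:diff-calc} give finitely supported approximations with $T_n^*\mu \to \mu$ in $\Pcal_p$ (Lemma~\ref{L_Tn_prop}\ref{L_Tn_prop_4}), it suffices to prove $u(\bar\mu) \le v(\bar\mu)$ for every finitely supported $\bar\mu$. I induct on $N = |\supp(\bar\mu)|$. For $N=1$, i.e.\ $\bar\mu=\delta_x$, the set $D_{\bar\mu}$ is a single point and $Lf(\delta_x,\rho) = 0$ for every $\rho$ (the signed measure $\sigma=(\rho-\rho(x))\delta_x$ vanishes), so the constant test function $f \equiv u(\delta_x) \in C^2(\Pcal_q)$ is admissible from above, and the subsolution property collapses to $\beta u(\delta_x) \le \inf_{\rho \in \Hb} c(\delta_x,\rho)$; symmetrically, $\beta v(\delta_x) \ge \inf_{\rho \in \Hb} c(\delta_x,\rho)$, hence $u(\delta_x) \le v(\delta_x)$.

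For the inductive step, assume the claim for measures with support of size less than $N$, and fix $\{x_1,\ldots,x_N\} \subset \R^d$. I identify $\Pcal(\{x_1,\ldots,x_N\})$ with the standard simplex $\Delta_N \subset \R^N$ via $p \leftrightarrow \sum_i p_i \delta_{x_i}$, and set $\tilde u(p) := u(\sum_i p_i \delta_{x_i})$, $\tilde v(p) := v(\sum_i p_i \delta_{x_i})$; by induction, $\tilde u \le \tilde v$ on $\partial \Delta_N$ (whose faces are $\Pcal(S)$ for strict subsets $S$). Choosing bump functions $\psi_i \in C_c(\R^d)$ with $\psi_i(x_j) = \delta_{ij}$, any $C^2$ function $\tilde f$ on $\R^N$ lifts to a cylinder function $f(\mu) := \tilde f(\mu(\psi_1),\ldots,\mu(\psi_N)) \in C^2(\Pcal_q)$ which, under the above identification, agrees with $\tilde f$ on $D_{\bar\mu}=\Pcal(\{x_1,\ldots,x_N\})$; a test function touching $\tilde u$ from above at an interior point $\bar p \in \mathring\Delta_N$ therefore lifts to a test function touching $u$ from above on $D_{\bar\mu}$ at $\bar\mu$. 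Using the explicit second-derivative formula for cylinder functions from Section~\ref{sec:diff-calc}, a direct computation yields, for $\mu = \sum p_i \delta_{x_i} \in D_{\bar\mu}$,
\[
H(\mu;f) = \beta \tilde f(p) + \sup_{\rho\in\Hb}\Big\{-c(\mu,\rho) - \tfrac12\sum_{i,j}\partial^2_{ij}\tilde f(p)\, p_i p_j (\rho(x_i)-\bar\rho)(\rho(x_j)-\bar\rho)\Big\} =: \tilde H(p,\tilde f(p), D^2\tilde f(p)),
\]
where $\bar\rho := \sum_k p_k \rho(x_k)$. Combined with the strict-test-function reformulation of Lemma~\ref{L_viscsol_strict} and the continuity of $\tilde H$ (verified below), this shows that $\tilde u, \tilde v$ are viscosity sub- and supersolutions of $\tilde H = 0$ on $\mathring\Delta_N$ in the usual finite-dimensional sense.

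The last step is to apply standard Crandall--Ishii--Lions comparison for second-order degenerate elliptic equations on the compact domain $\Delta_N$, viewing the inductive inequality $\tilde u \le \tilde v$ on $\partial\Delta_N$ as Dirichlet-type data. The Hamiltonian $\tilde H$ is strictly monotone in $r$ (because $\beta > 0$) and degenerate elliptic in $M$; condition~\ref{T_comparison_1} provides continuity of $\tilde H$ in $p$ uniformly in $\rho$ via the cost term, while condition~\ref{T_comparison_2} ensures that the vector $V_i(p,\rho) := p_i(\rho(x_i) - \sum_k p_k\rho(x_k))$ is bounded and Lipschitz in $p$ uniformly over $\rho\in\Hb$, supplying the standard structural estimate needed for the CIL doubling-of-variables argument. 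The argument then forces any positive maximum of $\tilde u - \tilde v$ to lie on $\partial\Delta_N$, which is excluded by the induction, yielding $\tilde u \le \tilde v$ on $\Delta_N$ and closing the induction. The main obstacle I foresee is the faithful transfer between the intrinsic measure-valued formulation---with its semicontinuous envelopes over $D_{\bar\mu}$ and the family of test functions from $C^2(\Pcal_q)$---and the finite-dimensional picture on $\Delta_N$; in particular, one must check that cylinder test functions generate enough tests to characterise sub/supersolutions on each simplex, and that the supremum over the potentially infinite-dimensional set $\Hb$ (whose elements may affect $c$ through values outside $\{x_1,\ldots,x_N\}$) remains compatible with the continuity and structural assumptions required by CIL.
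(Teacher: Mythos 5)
Your proposal is correct and its core is the same as the paper's: reduce by density and continuity of $u,v$ to finitely supported measures, identify $\Pcal(\{x_1,\dots,x_N\})$ with the simplex, lift finite-dimensional test functions to $C^2$ cylinder functions (so that $Lf(\mu,\rho)=\tilde L\tilde f(p,\rho)$), and invoke the Crandall--Ishii--Lions machinery, with condition \ref{T_comparison_1} supplying the modulus for $\tilde c$ and condition \ref{T_comparison_2} the uniform bound and Lipschitz estimate on $z(p,\rho)_i=p_i(\tilde\rho_i-p\cdot\tilde\rho)$. The one genuine difference is organizational: you run an induction on the support size, establish the sub/supersolution property only on the relative interior of the simplex, and feed the inductive inequality on the faces into the doubling argument as Dirichlet-type data. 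The paper instead proves (Lemma~\ref{L_HJB_N}) that $\tilde u,\tilde v$ are viscosity sub/supersolutions at \emph{every} point of the closed simplex in the relative sense of \cite[Definition~2.2]{crandall1992} --- this works because at a boundary point $\bar p$ the set $D_{\bar\mu}$ is exactly the corresponding face, so the measure-valued definition (with its $\liminf$ over $\mu\preceq\bar\mu$) hands you the relative finite-dimensional property for free --- and then applies comparison once on the compact set $\Delta^{N-1}$ with no induction and no boundary condition. Your route costs an extra (routine) argument that any positive maximum of $\tilde u-\tilde v$, and hence the limit of the doubled maximizers, must be interior; the paper's route costs nothing extra but relies on noticing that the decreasing-support structure makes the equation hold up to the boundary of each simplex. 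Both are valid; just make sure in your write-up that the interior viscosity property is understood relative to the affine hull of $\Delta_N$ (the simplex has empty interior in $\R^N$), and that a local touching test function on $\mathring\Delta_N$ is modified away from the touching point so that its cylinder lift dominates $u$ on all of $D_{\bar\mu}$, as Definition~\ref{D_visc_sol} requires.
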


The proof of Theorem~\ref{T_comparison} proceeds by reducing the problem to a comparison result for a PDE on a finite-dimensional space. We now describe this reduction. For any $N\in\N$, denote the standard $(N-1)$-simplex in $\R^N$ by
\[
\Delta^{N-1} = \{(p_1,\ldots,p_N) \in [0,1]^N\colon p_1+\cdots+p_N=1\}.
\]
Given $N$ points $x_1,\ldots,x_N\in {\R^d}$, there is a natural bijection between measures $\mu\in\Pcal(\{x_1,\ldots,x_N\})$ and points $p\in\Delta^{N-1}$, given by
\[
\mu = p_1\delta_{x_1} + \cdots + p_N\delta_{x_N}.
\]
In particular, any given function $u\colon \Pc_p\to\overline\R$ induces a function $\tilde u\colon\Delta^{N-1}\to\overline\R$ defined by
\begin{equation}\label{eq_utilde}
\tilde u(p_1,\ldots,p_N) = u(p_1\delta_{x_1} + \cdots + p_N\delta_{x_N}).
\end{equation}
If $u$ is a viscosity solution of \eqref{eq_HJB}, it turns out that $\tilde u$ is a viscosity solution of a certain equation on the simplex. To specify this, for $\rho\in\Hb$ and $p\in\Delta^{N-1}$, let
\begin{align*}
\tilde c(p,\rho)=c(p_1\delta_{x_1}+\cdots+p_N\delta_{x_N},\rho).
\end{align*}
Further, for $\rho\in\Hb$, let $\tilde\rho=(\rho(x_1),\ldots,\rho(x_N))$, and consider the operator $\tilde L$ defined for $\tilde f\in C^2(\R^N)$ by
\[
\tilde L\tilde f(p,\rho) = \frac12 \sum_{i,j=1}^N \frac{\partial^2 \tilde f}{\partial p_i\partial p_j}(p)(\tilde\rho_i - p\cdot\tilde\rho)(\tilde\rho_j - p\cdot\tilde\rho)p_i p_j,
\]
where $p\in\Delta^{N-1}$ and $p\cdot\tilde\rho$ is the inner product between the two vectors. One readily verifies that $\tilde L\tilde f_1(p,\rho)=\tilde L\tilde f_2(p,\rho)$ if $\tilde f_1(x)=\tilde f_2(x)$ for each $x\in \Delta^{N-1}$.

The relevant equation on the simplex then takes the following form: 
\begin{equation}\label{eq_HJB_N}
\beta \tilde u(p) + \sup_{\rho\in\Hb}\big\{ - \tilde c(p,\rho) - \tilde L\tilde u(p,\rho)\big\} = 0,\quad p\in\Delta^{N-1}.
\end{equation} 
We note that \eqref{eq_HJB_N} equivalently can be written as 
\begin{align}\label{eq_HJB_N_H}
\tilde H\left(p,\tilde u(p),D^2\tilde u(p)\right)=0,\quad p\in\Delta^{N-1},
\end{align}
where, for any $p\in\Delta^{N-1}$, $r\in\R$ and symmetric $N\times N$-matrix $P$,
\begin{align*}
\tilde H(p,r,P)=\beta r+\sup_{\rho\in\Hb}\Big\{-\tilde c(p,\rho)-\frac12 z(p,\rho)^TPz(p,\rho)\Big\},
\end{align*}
with $z(p,\rho)\in\R^N$ given by $z(p,\rho)_i=p_i(\tilde\rho_i-p\cdot\tilde\rho)$, for $i=1,...,N$, and for all $\rho\in\Hb$.

\begin{lemma}\label{L_HJB_N}
Suppose that the assumptions of Theorem~\ref{T_comparison} hold.
Let $u\in C(\Pc_p)$ be a viscosity subsolution (resp. supersolution) of \eqref{eq_HJB} for some $q \in [1,p] \cup \{0\}$. Let $N\in\Nb$ and let $x_1,...,x_N$ be distinct points in ${\R^d}$. Define $\tilde u\in C(\Delta^{N-1})$ by \eqref{eq_utilde}. Then $\tilde u$ is a viscosity subsolution\footnote{In the sense of \cite[Definition~2.2]{crandall1992}; see also Remark~2.3 therein.} (resp. supersolution) of \eqref{eq_HJB_N}.
\end{lemma}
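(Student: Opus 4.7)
The plan is to lift any finite-dimensional test function for $\tilde u$ at $\bar p\in\Delta^{N-1}$ to a measure-valued test function for $u$ at $\bar\mu:=\sum_{i=1}^N\bar p_i\delta_{x_i}$, exploiting that $D_{\bar\mu}\subset\Pcal(\{x_1,\ldots,x_N\})$ is naturally identified via $\mu=\sum_iq_i\delta_{x_i}$ with the face $\Delta_{\bar p}:=\{q\in\Delta^{N-1}:q_i=0\text{ whenever }\bar p_i=0\}$. I treat only the subsolution case; the supersolution case is identical up to signs. So fix $\tilde\varphi\in C^2(\R^N)$ such that $\tilde u-\tilde\varphi$ attains a local maximum at $\bar p$ on $\Delta^{N-1}$; after the standard perturbation $\tilde\varphi\mapsto\tilde\varphi+|p-\bar p|^2$ and an additive constant, I may assume this maximum is strict and equal to $0$. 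The target is $\tilde H(\bar p,\tilde u(\bar p),D^2\tilde\varphi(\bar p))\le0$.

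First I globalize the test function. Using compactness of $\Delta^{N-1}$ and continuity of $\tilde u$, I replace $\tilde\varphi$ by $\tilde\varphi^*:=\tilde\varphi+\lambda\eta$, where $\eta\in C^2(\R^N)$ is nonnegative, vanishes to second order at $\bar p$ (e.g.\ $\eta(p)=|p-\bar p|^4$), and $\lambda>0$ is taken large enough that $\tilde\varphi^*\ge\tilde u$ on all of $\Delta^{N-1}$ with equality only at $\bar p$. This preserves $\tilde\varphi^*(\bar p)=\tilde\varphi(\bar p)$ and $D^2\tilde\varphi^*(\bar p)=D^2\tilde\varphi(\bar p)$. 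Next I choose continuous bumps $\psi_1,\ldots,\psi_N\in C_b(\R^d)$ with disjoint supports and $\psi_i(x_j)=\delta_{ij}$, and set $f(\mu):=\tilde\varphi^*(\mu(\psi_1),\ldots,\mu(\psi_N))$; this is a $C^2$ cylinder function and hence lies in $C^2(\Pcal_q)$. For $\mu=\sum_iq_i\delta_{x_i}\in D_{\bar\mu}$ one has $\mu(\psi_i)=q_i$, so $f(\mu)=\tilde\varphi^*(q)\ge\tilde u(q)=u(\mu)$ on $D_{\bar\mu}$, with equality only at $\bar\mu$; continuity of $u$ makes $f(\bar\mu)=u(\bar\mu)=\limsup_{\mu\to\bar\mu,\,\mu\preceq\bar\mu}u(\mu)$. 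Thus $f$ is an admissible test function in Definition~\ref{D_visc_sol}, and the subsolution property for $u$ yields
\[
\liminf_{\mu\to\bar\mu,\,\mu\preceq\bar\mu}H(\mu;f)\le0.
\]

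It remains to identify this liminf. A direct computation from \eqref{eq_C1r_der_cyl_ex}-\eqref{eq_C2r_2n_der_cyl_ex}, with $\sigma(\dx)=(\rho(x)-\mu(\rho))\mu(\dx)$, gives for $\mu=\sum_iq_i\delta_{x_i}\in D_{\bar\mu}$ and $\rho\in\Hb$,
\[
Lf(\mu,\rho)=\tfrac12\,z(q,\rho)^\top D^2\tilde\varphi^*(q)\,z(q,\rho)=\tilde L\tilde\varphi^*(q,\rho),\qquad c(\mu,\rho)=\tilde c(q,\rho),
\]
so that $H(\mu;f)=\tilde H(q,\tilde\varphi^*(q),D^2\tilde\varphi^*(q))$. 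I then show that this expression is continuous in $q\in\Delta_{\bar p}$ at $\bar p$: condition~\ref{T_comparison_1} provides uniform-in-$\rho$ continuity of $\tilde c(\fdot,\rho)$ on $\Delta^{N-1}$, and since $z_i(q,\rho)=q_i(\tilde\rho_i-q\cdot\tilde\rho)$ is invariant under $\tilde\rho\mapsto\tilde\rho+c\mathbf{1}$, condition~\ref{T_comparison_2} makes $\{z(q,\rho):\rho\in\Hb\}$ bounded in $\R^N$ uniformly in $q\in\Delta^{N-1}$ and continuous in $q$ uniformly in $\rho$. Together these let the $\sup$ defining $\tilde H$ pass to the limit, so the liminf equals $H(\bar\mu;f)=\tilde H(\bar p,\tilde u(\bar p),D^2\tilde\varphi(\bar p))$, as required. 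I expect the hard step to be precisely this last uniform-in-$\rho$ convergence of the $\sup$-Hamiltonian along $D_{\bar\mu}$, which is exactly what the technical hypothesis~\ref{T_comparison_2} is engineered to provide; the remainder is bookkeeping around the cylinder construction.
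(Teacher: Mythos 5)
Your proposal is correct and follows essentially the same route as the paper: lift the finite-dimensional test function to a $C^2$ cylinder function via bumps $\psi_i$ with $\psi_i(x_j)=\delta_{ij}$, verify $Lf(\mu,\rho)=\tilde L\tilde f(q,\rho)$ on $D_{\bar\mu}\cong\Delta_{\bar p}$, invoke the measure-valued subsolution property, and use conditions \ref{T_comparison_1}--\ref{T_comparison_2} (via the shift-invariance of $z(q,\rho)$) to get continuity of $\tilde H$ and pass from the $\liminf$ to the pointwise inequality. The only differences are cosmetic: you carry out the local-to-global reduction of the test function explicitly, where the paper starts from a globally dominating $\tilde f$ and cites \cite[Remark~2.3]{crandall1992}.
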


\begin{proof}
We consider only the subsolution case. Pick any point $\bar p\in\Delta^{N-1}$ and a function $\tilde f\in C^2(\R^N)$ such that $\tilde f(\bar p)=\tilde u(\bar p)$ and $\tilde f\ge\tilde u$ on $\Delta^{N-1}$; we first show that
\begin{align}\label{eq:proof_liminf_viscosity}
\liminf_{p\to\bar p,\, p\in\Delta^{N-1}}\beta \tilde f(p) + \sup_{\rho\in\Hb}\big\{ - \tilde c(p,\rho) - \tilde L\tilde f(p,\rho)\big\}
\le 0.
\end{align}

Define a $C^2$ cylinder function by
\[
f(\mu) = \tilde f(\mu(\varphi_1),\ldots,\mu(\varphi_N)),
\quad \mu\in\Pc_q,
\]
where the $\varphi_i\in C_b$ are chosen so that $\varphi_i(x_i)=1$ and $\varphi_i(x_j)=0$ for $j\ne i$. Define also the measure
\[
\bar\mu =\bar p_1\delta_{x_1} + \cdots + \bar p_N\delta_{x_N}\in\Pc_p.
\]
Any $\mu\preceq\bar\mu$ is then an element of $\Pcal(\{x_1,\ldots,x_N\})$ and therefore of the form $\mu=p_1\delta_{x_1}+\cdots+p_N\delta_{x_N}$ with $p=(p_1,\ldots,p_N)\in\Delta^{N-1}$. Note that $f(\mu) = \tilde f(p)$. Moreover, in view of the expression \eqref{eq_C2r_2n_der_cyl_ex} for the derivative of a $C^2$ cylinder function, we have that
\[
\frac{\partial^2 f}{\partial \mu^2}(x_i,x_j,\mu) = \frac{\partial^2 \tilde f}{\partial p_i\partial p_j}(p_1,\ldots,p_N),
\quad i,j=1,\ldots,N;
\]
hence, $Lf(\mu,\rho)=\tilde L\tilde f(p,\rho)$, $\rho\in\Hb$. Since, with the above identification, the Wasserstein distance is equivalent to the Euclidean distance on $\Delta^{N-1}$, we thus obtain
\begin{align}\label{eq:proof_reduced_visc_sln}
\liminf_{p\to\bar p,\, p\in\Delta^{N-1}}\beta \tilde f(p) + \sup_{\rho\in\Hb}\big\{ - \tilde c(p,\rho) - \tilde L\tilde f(p,\rho)\big\}
\le \liminf_{\mu\to\bar\mu,\, \mu\preceq\bar\mu} H(\mu;f).
\end{align}
Further, note that for any $\mu\preceq\bar\mu$, identified as above with a point $p\in\Delta^{N-1}$,
\[
f(\mu) = \tilde f(p) \ge \tilde u(p) = u(\mu);
\]
in particular, $f(\bar\mu)=u(\bar\mu)$. Using that $u=\hat u$, the fact that $u$ is a viscosity subsolution of \eqref{eq_HJB}, and the inequality \eqref{eq:proof_reduced_visc_sln}, we thus obtain \eqref{eq:proof_liminf_viscosity}.

Comparing \eqref{eq_HJB_N} and \eqref{eq_HJB_N_H}, we now see that in order to conclude, it suffices to establish continuity of the mapping $(p,r,P)\mapsto\tilde H(p,r,P)$. 
To this end, note first that an elementary calculation gives $\|z(p,\rho)-z(q,\rho)\|
\le 3\|\tilde\rho\|\|p-q\|$, for any $p,q\in\Delta^{N-1}$ and $\rho\in\Hb$.
Since $z(p,\rho)$ is invariant with respect to parallel shifts of $\rho$, and thanks to assumption \ref{T_comparison_2} of Theorem~\ref{T_comparison}, this implies
\begin{align}\label{eq:revision_bound_norm_z}
\|z(p,\rho)-z(q,\rho)\| \le 3\|\tilde\rho - \rho(0) \|\|p-q\| \le \kappa\|p-q\|
\end{align}
for some constant $\kappa>0$. 
A similar argument gives that $(p,\rho)\mapsto\|z(p,\rho)\|$ is bounded on $\Delta^{N-1}\times\Hb$. 
Hence, there exists some constant $\delta>0$, such that for any $\rho\in\Hb$, $p,q\in\Delta^{N-1}$ and symmetric $N\times N$-matrices $P,Q$,
\begin{align*}
     \left|z(q,\rho)^TQz(q,\rho)-z(p,\rho)^TPz(p,\rho)\right|
     \le\delta\left(\|P\|\|p-q\|+\|P-Q\|\right),
\end{align*}
where $\|\cdot\|$ denotes the operator norm for symmetric $N\times N$-matrices.
In consequence, for any $p,q\in\Delta^{N-1}$, $r,s\in\R$ and symmetric $N\times N$-matrices $P,Q$,
\begin{align}\label{eq:revision_estimate_hamiltonian}
    \big|\tilde H(q,s,Q)-\tilde H(p,r,P)\big|
    \le & \;\beta \left|s-r\right|
    +\sup_{\rho\in\Hb}\left|\tilde c(q,\rho)-\tilde c(p,\rho)\right|\nonumber\\
    &+\frac12\sup_{\rho\in\Hb}\left|z(q,\rho)^TQz(q,\rho)-z(p,\rho)^TPz(p,\rho)\right|\\
    \le & \;\beta|r-s|+\omega(\|p-q\|)+\frac{\delta}{2}\left(\|P\|\|p-q\|+\|P-Q\|\right),\nonumber
\end{align}
where $\omega$ is a modulus of continuity which only depends on $c$. Such a modulus exists thanks to condition \ref{T_comparison_1} of Theorem~\ref{T_comparison}.
This establishes the continuity of $\tilde H$ and the proof is complete.
\end{proof}

\begin{lemma}\label{L_comparison_N}
Suppose that the assumptions of Theorem~\ref{T_comparison} hold. Let $N\in\Nb$ and let $x_1,...,x_N$ be distinct points in ${\R^d}$. Then the comparison principle holds for the PDE \eqref{eq_HJB_N}. Specifically, if $\tilde u,\tilde v\in C(\Delta^{N-1})$ are viscosity sub- and supersolutions of \eqref{eq_HJB_N}, respectively, then $\tilde u\le \tilde v$ on $\Delta^{N-1}$. 
\end{lemma}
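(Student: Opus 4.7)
The plan is to apply the classical doubling-of-variables technique from Crandall--Ishii--Lions viscosity theory on the compact simplex $\Delta^{N-1}$, exploiting the continuity and structural estimates of $\tilde H$ already derived in the proof of Lemma~\ref{L_HJB_N}. Suppose, for contradiction, that
$$M := \max_{p \in \Delta^{N-1}}(\tilde u - \tilde v)(p) > 0,$$
attained at some $\hat p \in \Delta^{N-1}$ (the maximum exists by compactness and continuity). For each $\varepsilon > 0$, I would introduce the penalized functional
$$\Phi_\varepsilon(p,q) = \tilde u(p) - \tilde v(q) - \frac{1}{2\varepsilon}\|p-q\|^2$$
on $\Delta^{N-1} \times \Delta^{N-1}$, select a maximizer $(p_\varepsilon, q_\varepsilon)$, and invoke the standard penalization lemma (\cite[Lemma~3.1]{crandall1992}) to obtain $p_\varepsilon, q_\varepsilon \to \hat p$, $\|p_\varepsilon - q_\varepsilon\|^2/\varepsilon \to 0$, and $\tilde u(p_\varepsilon) - \tilde v(q_\varepsilon) \to M$.

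I would then apply the Crandall--Ishii lemma (\cite[Theorem~3.2]{crandall1992}) at $(p_\varepsilon, q_\varepsilon)$ to produce symmetric $N\times N$ matrices $X_\varepsilon, Y_\varepsilon$ for which the viscosity inequalities read
$$\tilde H(p_\varepsilon, \tilde u(p_\varepsilon), X_\varepsilon) \le 0 \le \tilde H(q_\varepsilon, \tilde v(q_\varepsilon), Y_\varepsilon),$$
together with the matrix bound
$$\begin{pmatrix} X_\varepsilon & 0 \\ 0 & -Y_\varepsilon \end{pmatrix} \le \frac{3}{\varepsilon}\begin{pmatrix} I & -I \\ -I & I \end{pmatrix}.$$
Pairing this inequality with the vector $\bigl(z(p_\varepsilon,\rho),\,z(q_\varepsilon,\rho)\bigr)$ and invoking the uniform Lipschitz bound \eqref{eq:revision_bound_norm_z} gives, for every $\rho \in \Hb$,
$$z(p_\varepsilon,\rho)^\top X_\varepsilon z(p_\varepsilon,\rho) - z(q_\varepsilon,\rho)^\top Y_\varepsilon z(q_\varepsilon,\rho) \le \frac{3}{\varepsilon}\|z(p_\varepsilon,\rho) - z(q_\varepsilon,\rho)\|^2 \le \frac{3\kappa^2}{\varepsilon}\|p_\varepsilon - q_\varepsilon\|^2.$$

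Subtracting the two viscosity inequalities, applying the elementary bound $\sup_\rho A(\rho) - \sup_\rho B(\rho) \ge \inf_\rho\bigl(A(\rho) - B(\rho)\bigr)$, and using the uniform modulus of continuity of $\tilde c(\fdot,\rho)$ supplied by condition~\ref{T_comparison_1}, I would obtain
$$0 \ge \beta\bigl(\tilde u(p_\varepsilon) - \tilde v(q_\varepsilon)\bigr) - \omega(\|p_\varepsilon - q_\varepsilon\|) - \frac{3\kappa^2}{2\varepsilon}\|p_\varepsilon - q_\varepsilon\|^2.$$
Passing to the limit $\varepsilon \to 0$ produces $0 \ge \beta M > 0$, the required contradiction; this is precisely where the standing assumption $\beta > 0$ enters.

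The main technical obstacle is to justify the Crandall--Ishii lemma when $(p_\varepsilon, q_\varepsilon)$ lies on the relative boundary of the compact simplex. I would address this by identifying $\Delta^{N-1}$ with a closed subset of its $(N-1)$-dimensional affine hull and invoking the semijet formulation of the lemma for viscosity sub/supersolutions on closed sets, noting that the inequalities satisfied by $\tilde u$ and $\tilde v$ through Lemma~\ref{L_HJB_N} are tested against $C^2$ functions on the ambient Euclidean space. Alternatively, one could proceed by induction on $N$, observing that $z(p,\rho)_i = 0$ whenever $p_i = 0$ so that each face of $\Delta^{N-1}$ is invariant and the PDE restricts to the same form on it; the base case of a vertex is trivial via the reduction $\tilde u(\delta_{x_i}) = \inf_{\rho \in \Hb}\tilde c(\delta_{x_i},\rho)/\beta$ of Remark~\ref{rem3}.
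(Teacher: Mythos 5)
Your proposal is correct and follows essentially the same route as the paper's proof: doubling of variables with quadratic penalization, \cite[Lemma~3.1 and Theorem~3.2]{crandall1992} to produce the matrix inequalities, the Lipschitz bound \eqref{eq:revision_bound_norm_z} on $z(\fdot,\rho)$ together with the uniform modulus of continuity of $\tilde c$ to control the Hamiltonian difference, and the discount $\beta>0$ to reach the contradiction. The boundary issue you flag is already absorbed by the fact that the Crandall--Ishii machinery is formulated for sub-/supersolutions on locally compact sets such as $\Delta^{N-1}$, which is the sense in which Lemma~\ref{L_HJB_N} delivers $\tilde u$ and $\tilde v$.
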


\begin{proof}
Recall that equation \eqref{eq_HJB_N} equivalently can be written in the form \eqref{eq_HJB_N_H}. Let $\tilde u,\tilde v\in C(\Delta^{N-1})$ be viscosity sub- and supersolutions of \eqref{eq_HJB_N_H}, respectively. For any $\alpha>0$, define
	\begin{align*}
		M_\alpha=\sup_{\Delta^{N-1}\times\Delta^{N-1}}
		\Big(\tilde u(p)-\tilde v(q)-\frac{\alpha}{2}\|p-q\|^2\Big);
	\end{align*}
since $\tilde u-\tilde v$ is continuous and $\Delta^{N-1}$ is compact, $M_\alpha<\infty$ is attained for some $(p_\alpha,q_\alpha)$. According to \cite[Lemma~3.1~(i)]{crandall1992}, $\alpha\|p_\alpha-q_\alpha\|^2\to 0$ as $\alpha\to\infty$. 
	
Recall from the proof of Lemma \ref{L_HJB_N} that $\tilde H$ is continuous. Applying \cite[Theorem~3.2; see also Remark~2.4 and equation~(3.10)]{crandall1992}, and using that $\tilde u$ and $\tilde v$ are viscosity sub- and supersolutions of \eqref{eq_HJB_N_H}, we deduce the existence of two symmetric $N\times N$-matrices $P_\alpha,Q_\alpha$ such that
	\begin{align}\label{eq_ellipticity_ishii}
		\tilde H\left(p_\alpha,\tilde u(p_\alpha),P_\alpha\right) 
		\le 0 \le 
		\tilde H\left(q_\alpha,\tilde v(q_\alpha),Q_\alpha\right)
	\end{align}
and
    \begin{align*}
        z(p_\alpha,\rho)^TP_\alpha z(p_\alpha,\rho)-z(q_\alpha,\rho)^TQ_\alpha z(q_\alpha,\rho)\le 3\alpha \|z(p_\alpha,\rho)-z(q_\alpha,\rho)\|^2,
        \textrm{ for all $\rho\in\Hb$}.
    \end{align*}
Making use of \eqref{eq:revision_bound_norm_z} and estimates similar to \eqref{eq:revision_estimate_hamiltonian}, we obtain from the latter property that for each $r\in\R$,
\begin{align}\label{eq_assump_comparison}
	\tilde H(q_\alpha,r, & Q_\alpha)-\tilde H(p_\alpha,r,P_\alpha)\nonumber\\
	&\le \sup_{\rho\in\Hb}\Big\{\tilde c(p_\alpha,\rho)-\tilde c(q_\alpha,\rho)+\frac12 \Big(z(p_\alpha,\rho)^TP_\alpha z(p_\alpha,\rho)-z(q_\alpha,\rho)^TQ_\alpha z(q_\alpha,\rho)\Big)\Big\}\nonumber\\
	&\le \omega(\|p_\alpha-q_\alpha\|)+3\alpha\kappa\|p_\alpha-q_\alpha\|^2,
\end{align}
where $\kappa>0$ is a constant and $\omega$ is a modulus of continuity which only depends on $c$. 

In order to conclude, suppose contrary to the claim that there exists some $\bar p\in\Delta^{N-1}$ with $\tilde u(\bar p)>\tilde v(\bar p)$. Then, there exists $\delta>0$ such that for all $\alpha>0$,
	\begin{align*}
		M_\alpha\ge \tilde u(\bar p)-\tilde v(\bar p)>\delta.
	\end{align*}
For each $\alpha>0$, using \eqref{eq_ellipticity_ishii} and, in turn, \eqref{eq_assump_comparison}, we thus obtain
\begin{align*}
	\beta\delta 
	\le \beta\left(\tilde u(p_\alpha)-\tilde v(q_\alpha)\right)
	&= \tilde H\left(p_\alpha,\tilde u(p_\alpha),P_\alpha\right)-\tilde H\left(p_\alpha,\tilde v(q_\alpha),P_\alpha\right)\\
	&\le \tilde H\left(q_\alpha,\tilde v(q_\alpha),Q_\alpha\right)-\tilde H\left(p_\alpha,\tilde v(q_\alpha),P_\alpha\right)\\
	&\le \omega(\|p_\alpha-q_\alpha\|)+3\kappa\alpha\|p_\alpha-q_\alpha\|^2,
\end{align*}
and sending $\alpha\to\infty$ yields the desired contradiction.
\end{proof}

\begin{proof}[Proof of Theorem~\ref{T_comparison}]
Let $u,v\in C(\Pc_p)$ be a viscosity sub- and supersolution of \eqref{eq_HJB}, respectively. 
It suffices to argue that $u(\mu)\le v(\mu)$ for any finitely supported $\mu\in\Pc_p$. Indeed, since the finitely supported measures are dense in $\Pc_p$, for an arbitrary $\mu\in\Pc_p$, we can pick a sequence of finitely supported $\mu_n$ with $\mu_n\to\mu$, and then use the continuity of $u$ and $v$ to obtain 
	\begin{align*}
 		(u-v)(\mu)=\lim_{n\to\infty}(u-v)(\mu_n)\le 0.
 	\end{align*} 	
Let therefore $\mu\in\Pcal(\{x_1,\ldots,x_N\})$ for some distinct points $x_1,\ldots,x_N\in {\R^d}$, $N\in\N$. By Lemma~\ref{L_HJB_N}, the functions $\tilde u,\tilde v\in C(\Delta^{N-1})$ defined by
\begin{align*}
\tilde u(p_1,\ldots,p_N) &= u(p_1\delta_{x_1} + \cdots + p_N\delta_{x_N}),\\
\tilde v(p_1,\ldots,p_N) &= v(p_1\delta_{x_1} + \cdots + p_N\delta_{x_N}),
\end{align*}
are viscosity sub- and supersolutions of \eqref{eq_HJB_N}, respectively. Thus, by Lemma~\ref{L_comparison_N}, $\tilde u\le\tilde v$ on $\Delta^{N-1}$, or equivalently, $u\le v$ on $\Pcal(\{x_1,\ldots,x_N\})$. Hence $u(\mu)\le v(\mu)$ and we conclude. 
\end{proof}

\section{Applications}
\label{sec:applications}

We here give some concrete examples of solvable control problems which can be addressed using the framework set out in this article. In particular, we explain how our main results relate to the applications which were described in the introduction. In sections \ref{sec:SEP} to \ref{sec:ZSG}, we summarise potential applications at a general level. The results we have presented may not be directly applicable, and would potentially require modified versions of our control problems which would include e.g.~time-dependent cost functions, or cost functions which depend on additional (possibly controlled) processes. This would allow extensions of our arguments to e.g.~finite horizon examples. We anticipate that the previous results will extend to these cases with little adaptation, but we leave formal justification of these arguments to future work.

    \subsection{An abstract control problem}
The goal of this subsection is to illustrate the versatility of our methods by considering two toy examples that we solve explicitly. We rely on several results provided in this paper including the verification theorem (Proposition~\ref{prop1}), the existence theorem (Theorem~\ref{thm5}), and the comparison principle (Theorem~\ref{T_comparison}). The results are derived at the end of the subsection from a general technical result, Theorem~\ref{thm1}.

\begin{example}\label{ex91}
Fix $q=0$, a constant $C>0$, a set of actions $\Hb$ such that $|\rho(x)|\leq C(1+|x|^{p/2})$ for each $\rho\in\Hb$ and $x\in {\R^d}$,
a discount rate $\beta>0$, and  two functions $\varphi \in C_b({\R^d})$ and $\bar\rho\in\Hb$.
 For some $\alpha\geq0$ define
\begin{equation}\label{eqn1}
c(\mu,\rho):=\mu(\varphi )^2+\alpha\Var_\mu(\bar\rho-\rho)-\frac 1 \beta \Cov_\mu(\varphi ,\rho)^2.
\end{equation}
Then the corresponding stochastic optimal problem can be solved explicitly. The corresponding value function is  the unique continuous viscosity solution of \eqref{eq_HJB} and is given by
$$
\frac 1 \beta \mu(\varphi )^2 
=\inf\left\{  \E\left[ \int_0^\infty e^{-\beta t} c(\xi_t,\rho_t) \dt \right] \colon \text{$(\xi,\rho)$ admissible control, $\xi_0=\mu$}\right\}.
$$
Moreover, there exists an optimal control $(\xi^*,\rho^*)$ satisfying $\rho_s^*=\bar\rho$ for a.e.~$s\geq 0$. 
The three terms of the cost function \eqref{eqn1} can be interpreted as follows.
\begin{itemize}
\item $\mu(\varphi)^2$: If $\varphi $ is nonnegative this term penalises controls $\xi$ putting mass on regions where $\varphi $ is large. For a general $\varphi $ this term would be an incentive in choosing controls $\xi$ which are balanced with respect to $\varphi $. For example, for $d=1$, choosing $\varphi (x)=x$  penalises non-centered controls $\xi$.
\item $\alpha\Var_\mu(\bar\rho-\rho)$: This term penalises controls $\rho$ which deviate from a given target $\bar\rho$. Deviations in regions where the corresponding MVM $\xi$ is more concentrated are penalised more severely.
\item $-\frac 1 \beta \Cov_\mu(\varphi ,\rho)^2$: Since $-\frac 1 \beta \Cov_\mu(\varphi ,\rho)^2=-\frac 1 \beta \Corr_\mu(\varphi ,\rho)^2\Var_\mu(\varphi )\Var_\mu(\rho)$, we can see that this term penalises uncorrelation between $\varphi $ and $\rho$  and incentives the variance of $\rho$ with respect to $\xi$.
\end{itemize}

This example can be generalised by letting $\bar\rho$ depend on $\mu$ and requiring  $(\xi, \bar\rho_\xi)$ to be an admissible control for some continuous MVM $\xi$. The optimal control $(\xi,\rho)$ would satisfy $\rho_s=\bar\rho_{\xi_s}$. It is also possible to relax the boundedness condition on $\varphi $ by imposing a lower bound on the parameter $p$. 
\end{example}

\begin{example}\label{ex92}
Fix $d=1$, $p\geq 4$, $q=1$, a state dependent set of actions
$$\Hb(\mu):=\{\rho\in \Hb\colon \Var_\mu(\rho)\leq\Var(\mu)\}$$
for some $\Hb$ such that $\id\in \Hb$, and
a discount rate $\beta>0$.
Define 
$$
c(\mu):= \Var(\mu)^2 - \beta\Mb(\mu)^2.
$$
Then the corresponding control problem can be solved explicitly and the associated value function is  given by
$$
 -\Mb(\mu)^2
=\inf\left\{  \E\left[ \int_0^\infty e^{-\beta t} c(\xi_t) \dt \right] \colon \text{$(\xi,\rho)$ admissible control, $\xi_0=\mu$}\right\}.
$$
Moreover, the optimal control $(\xi^*,\rho^*)$ satisfies $\rho_t^* =
\id$ $\xi_t^* \otimes \dt$-almost surely. 

If one instead considers the cost function $c(\mu)= \Var(\mu)^2 + \beta\Var(\mu)$, the optimiser remains the same but the value of the problem then equals $\Var(\mu)$.

We observe that the MVM we construct here was previously constructed by \cite[Lemma~2.2]{eldan2016}. This example provides a natural optimality criterion for this construction.
\end{example}


In order to verify the previous two examples, we first provide a general technical result.

\begin{theorem}\label{thm1}
Fix $p \in [0,\infty) \cup \{0\}$, $q \in [1,p] \cup \{0\}$, $\beta\geq0$ and a set of actions $\Hb$. Let $v\in C^2(\Pcal_q)$, $c_1: \Pcal_p\times
  \mathbb{H} \to \R\cup\{+\infty\}$, and for  $\mu\in \Pcal_p$ and $\rho\in \mathbb H$ set
$$
    h(\mu) := \sup_{\rho \in \mathbb{H}} \left\{ -c_1(\mu, \rho) 
      -Lv(\mu,\rho)\right\}
      \quad\text{and}\quad
          c(\mu, \rho):= \beta v(\mu) + c_1(\mu, \rho) +h(\mu).
      $$
Suppose that $c$ satisfies condition \eqref{eqn9} and that for each admissible control $(\xi,\rho)$ one has the inequality $\E[\sup_{t\geq0}|v(\xi_t)e^{(\e-\beta) t}|]<\infty$ for some $\e>0$. Then, for $\mu\in\Pc_p$, 
\begin{equation}\label{eqn8}
v(\mu)\leq \inf\left\{  \E\left[ \int_0^\infty e^{-\beta t} c(\xi_t,\rho_t) \dt \right] \colon \text{$(\xi,\rho)$ admissible control, $\xi_0=\mu$}\right\}.
\end{equation}
Moreover, given $\mu\in\Pcal_p$, if there exists an admissible control $(\xi^*,\rho^*)$ with $\xi^*_0=\mu$ and
$$\rho_s^*\in{\textup{argmax}}_{\rho\in\Hb} \left\{ - c_1(\xi_s^*,\rho) - Lv(\xi_s^*,\rho) \right\},\quad \P\otimes \dt-a.e.,$$
then $(\xi^*,\rho^*)$ is an optimal control and \eqref{eqn8} holds with equality.
\end{theorem}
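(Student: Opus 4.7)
The strategy is to observe that by construction $v$ is a classical solution of the HJB equation~\eqref{eq_HJB} for the cost $c$, and then to invoke the verification theorem (Proposition~\ref{prop1}).

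The key computation is immediate. Since $c(\mu,\rho) - c_1(\mu,\rho) = \beta v(\mu) + h(\mu)$ does not depend on $\rho$,
\begin{align*}
\sup_{\rho\in\Hb}\bigl\{-c(\mu,\rho) - Lv(\mu,\rho)\bigr\}
&= -\beta v(\mu) - h(\mu) + \sup_{\rho\in\Hb}\bigl\{-c_1(\mu,\rho) - Lv(\mu,\rho)\bigr\}\\
&= -\beta v(\mu) - h(\mu) + h(\mu) = -\beta v(\mu),
\end{align*}
so $\beta v(\mu) + \sup_{\rho\in\Hb}\{-c(\mu,\rho) - Lv(\mu,\rho)\} = 0$, which is exactly~\eqref{eq_HJB}. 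Combined with $v\in C^2(\Pcal_q)$ and the standing hypotheses that $c$ satisfies~\eqref{eqn9} and that $v$ satisfies~\eqref{eqn4} along every admissible control, Proposition~\ref{prop1} yields the inequality~\eqref{eqn8}.

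For the final assertion, observe that any $(\xi^*,\rho^*)$ satisfying $\rho^*_s\in\mathrm{argmax}_{\rho\in\Hb}\{-c_1(\xi^*_s,\rho) - Lv(\xi^*_s,\rho)\}$ is automatically an element of $\mathrm{argmax}_{\rho\in\Hb}\{-c(\xi^*_s,\rho) - Lv(\xi^*_s,\rho)\}$, since the two objective functions differ only by the $\rho$-independent quantity $\beta v(\xi^*_s) + h(\xi^*_s)$. The second half of Proposition~\ref{prop1} therefore applies directly, giving optimality of $(\xi^*,\rho^*)$ and equality in~\eqref{eqn8}.

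Since the whole statement is essentially a repackaging of the verification principle, I do not anticipate a substantive obstacle. The only point to monitor is that $Lv(\mu,\rho)$ may take the value $+\infty$; by the convention fixed after the definition of $L$, however, $-c(\mu,\rho) - Lv(\mu,\rho)$ stays in $\R\cup\{-\infty\}$, so the supremum manipulations above remain unambiguous, and when the $\mathrm{argmax}$ is attained the quantity $Lv(\xi^*_s,\rho^*_s)$ is automatically finite wherever needed for the It\^o formula in the proof of Proposition~\ref{prop1}.
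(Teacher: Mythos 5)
Your proof is correct and follows essentially the same route as the paper: the authors likewise observe that with this cost the HJB equation \eqref{eq_HJB} reduces to an identity satisfied by $u=v$, and then conclude by the verification theorem (Proposition~\ref{prop1}). Your additional remarks on the $\rho$-independence of $\beta v(\mu)+h(\mu)$ in the argmax and on the convention for $Lv=+\infty$ are sound and only make explicit what the paper leaves implicit.
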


\begin{proof}
Observe that  in this context equation \eqref{eq_HJB} reads
$$\beta u(\mu) -\beta  v(\mu) -h(\mu)+\sup_{\rho\in\Hb}\left\{-c_1(\mu,\rho) - Lu(\mu,\rho) \right\} = 0,$$
which is  satisfied by $u= v$. 
The claim then follows by Proposition~\ref{prop1}.
\end{proof}

\begin{corollary}
The claimed results in Example~\ref{ex91} and Example~\ref{ex92} hold. 
\end{corollary}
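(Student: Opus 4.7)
The plan is to invoke the verification-type result Theorem~\ref{thm1}. Given a candidate $v\in C^2(\Pcal_q)$ and a decomposition $c=\beta v+c_1+h$ with $h(\mu):=\sup_{\rho\in\Hb}\{-c_1(\mu,\rho)-Lv(\mu,\rho)\}$, it yields the inequality $v\le$~value and, whenever an admissible control attaining the argmax of $-c_1-Lv$ can be constructed, the corresponding equality together with identification of the optimal control. For each example I will (i)~guess~$v$, (ii)~choose~$c_1$ so that $c$ recovers the stated cost, (iii)~produce an admissible argmax control via Theorem~\ref{thm5}, (iv)~verify the integrability assumptions~\eqref{eqn9} and~\eqref{eqn4}, and (v)~appeal to the comparison principle Theorem~\ref{T_comparison} for uniqueness among continuous viscosity solutions.

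For Example~\ref{ex91} I take $v(\mu):=\tfrac{1}{\beta}\mu(\varphi)^2$, a bounded $C^2$ cylinder function since $\varphi\in C_b$. The cylinder formula~\eqref{eq_C2r_2n_der_cyl_ex} yields $\frac{\partial^2 v}{\partial\mu^2}(x,y,\mu)=\tfrac{2}{\beta}\varphi(x)\varphi(y)$ and hence $Lv(\mu,\rho)=\tfrac{1}{\beta}\Cov_\mu(\varphi,\rho)^2$. Setting $c_1(\mu,\rho):=\alpha\Var_\mu(\bar\rho-\rho)-\tfrac{1}{\beta}\Cov_\mu(\varphi,\rho)^2$ gives $-c_1-Lv=-\alpha\Var_\mu(\bar\rho-\rho)\le 0$, so $h\equiv 0$ with the supremum attained at $\rho=\bar\rho$, and the decomposition recovers the cost~\eqref{eqn1}. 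Theorem~\ref{thm5} provides the admissible control $(\xi^*,\bar\rho)$ with $\xi^*_0=\mu$ and constant control $\bar\rho$, with~\eqref{eqn3} following from Lemma~\ref{rem1}; boundedness of $\varphi$ combined with the growth bound on $\Hb$ then immediately yield~\eqref{eqn9} and~\eqref{eqn4}. For uniqueness, $v$ is continuous, the cost restricted to any finitely supported simplex is polynomial in $\mu$ with coefficients bounded uniformly in $\rho\in\Hb$ (condition~\ref{T_comparison_1}), and the bound $|\rho(x)|\le C(1+|x|^{p/2})$ secures condition~\ref{T_comparison_2}, so Theorem~\ref{T_comparison} applies.

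For Example~\ref{ex92} I take $v(\mu):=-\Mb(\mu)^2$, so that $\frac{\partial^2 v}{\partial\mu^2}(x,y,\mu)=-2xy$ and $Lv(\mu,\rho)=-\Cov_\mu(\id,\rho)^2$. The state-dependent constraint is encoded by putting $c_1=+\infty$ off $\Hb(\mu)$ (Remark~\ref{rem2}), and $c_1$ is chosen on $\Hb(\mu)$ so that $c=\beta v+c_1+h$ matches the prescribed cost. The Cauchy--Schwarz bound $\Cov_\mu(\id,\rho)^2\le\Var(\mu)\Var_\mu(\rho)\le\Var(\mu)^2$, saturated at $\rho=\id\in\Hb$, identifies $\rho^*=\id$ as the argmax; Theorem~\ref{thm5} then furnishes the corresponding admissible MVM, which is precisely the construction of~\cite{eldan2016}. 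The hypothesis $p\ge 4$, combined with the martingale property (Lemma~\ref{L_lin_fcn}) and Doob's inequality applied to $\xi_t(\id^2)$, controls the moment estimates~\eqref{eqn9} and~\eqref{eqn4}, and Theorem~\ref{T_comparison} yields uniqueness as before. The only genuinely delicate step in either example is this uniform verification of the integrability hypotheses over admissible controls; the remaining computations, including the Cauchy--Schwarz identification of the argmax and the polynomial structure of the cost on finitely supported simplices, are essentially mechanical.
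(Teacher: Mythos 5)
Your proposal is correct and follows essentially the same route as the paper: the same choices of $v$, $c_1$ and $h$ fed into Theorem~\ref{thm1}, the same constant-control construction via Theorem~\ref{thm5} and Lemma~\ref{rem1}, the same Cauchy--Schwarz identification of the argmax in Example~\ref{ex92}, and the same martingale/Doob integrability checks. Two minor points: for the uniqueness claim in Example~\ref{ex91} you also need Proposition~\ref{lem1} to pass from the classical solution to a viscosity solution before invoking Theorem~\ref{T_comparison}, and your final appeal to comparison for Example~\ref{ex92} is both unnecessary (no uniqueness is claimed there) and dubious, since the cost there takes the value $+\infty$ and so fails condition~\ref{T_comparison_1}.
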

\begin{proof}
Concerning Example~\ref{ex91},
observe that setting $v(\mu):=\frac 1 \beta \mu(\varphi )^2$ we have that 
$ v$ is a bounded map in $C^2(\Pcal)$ and $L v(\mu,\rho)=\frac 1 \beta \Cov_\mu(\varphi ,\rho)^2$.
We claim that the conditions of 
 Theorem~\ref{thm1} are satisfied for 
 $$c_1(\mu,\rho)=\alpha\Var_\mu(\bar\rho-\rho)-\frac 1 \beta \Cov_\mu(\varphi ,\rho)^2\quad \text{ and } \quad
h(\mu)=
0.$$
Observe that Jensen inequality yields
$$\Cov_{\xi_t}(\varphi ,\rho_t)^2
\leq 4\sup_{\R^d} |\varphi |^2\xi_t(|\rho_t|^2)
\leq 8C \sup_{\R^d} |\varphi |^2\xi_t(1+|\fdot|^{p}).$$
Since the latter is a martingale, $c$ satisfies condition \eqref{eqn9}. Finally, for any $\mu\in\Pc_p$, let $(\xi^*,\rho^*)$ be the weak solution of \eqref{eq_MVM_SDE}, with $\xi^*_0=\mu$ and $\rho_t^*=\bar\rho$ for all $t$, provided by Theorem~\ref{thm5}.
By Lemma~\ref{rem1} $(\xi^*,\rho^*)$ is an admissible control.
Since
$$\bar\rho\in{\textup{argmax}}_{\rho\in\Hb}\{-\alpha\Var_{\xi_s^*}(\bar\rho-\rho)\}= {\textup{argmax}}_{\rho\in\Hb} \left\{ - c_1(\xi_s^*,\rho) - Lv(\xi_s^*,\rho) \right\},$$
$\P$-a.s.~for almost every $s$, the claim follows.

Since the conditions of Proposition~\ref{lem1} and Theorem~\ref{T_comparison} are satisfied, we can conclude that $v$ is  the unique continuous viscosity solution of \eqref{eq_HJB}.

Concerning Example~\ref{ex92},
observe that including the state constraint in the cost function as explained in Remark~\ref{rem2}, the cost function $c$ considered here is of the form described in Theorem~\ref{thm1} for
 $v(\mu)=-\Mb(\mu)^2$ and $c_1(\mu,\rho)=\infty\bm1_{\{\Var_\mu(\rho)>\Var(\mu)\}}$; moreover, for any $\mu\in\Pc_p$,
$$\sup_{\rho\in\Hb} \left\{ - c_1(\mu,\rho) - Lv(\mu,\rho) \right\}$$ is attained by $\rho=\id$.
Indeed, by the Cauchy-Schwarz inequality, we observe that
$$\Cov_{\mu}(\id,\rho)^2 \le \Var(\mu) \Var_\mu(\rho)$$ with equality, if and only if, $\rho = \id$ $\mu$-a.s. Hence, $$h(\mu)=\sup_{\rho\in\Hb} \left\{ - c_1(\mu,\rho) - Lv(\mu,\rho) \right\}=\sup_{\rho\in\Hb(\mu)}\Cov_{\mu}(\id,\rho)^2=\Var(\mu)^2,$$ where both suprema are attained by $\rho=\id$. 

It thus suffices to verify the conditions of Theorem~\ref{thm1}. To this end, we first check that $\E[\sup_{t\geq0}|v(\xi_t)|]<\infty$ for each admissible control. Since $(\Mb_t)_{t\geq 0}$ is a square integrable martingale, by Doob's inequality, $$\E[\sup_{t\in[0,T]}\Mb(\xi_t)^2]\leq C \E[\Mb(\xi_T)^2]\leq C \E[\xi_T((\fdot)^2)]=C\mu((\fdot)^2);$$ 
sending $T$ to infinity, the claim follows by the monotone convergence theorem.
The same calculation also shows that $c$ satisfies condition \eqref{eqn9}. Finally, for any $\mu\in\Pc_p$, let $(\xi^*,\rho^*)$ be the weak solution of \eqref{eq_MVM_SDE}, with $\xi^*_0=\mu$ and $\rho_t^*=\id$ for all $t$, provided by Theorem~\ref{thm5}.  Since $(\xi^*,\rho^*)$ satisfies condition \eqref{eqn3}, and we have that
$$\id\in{\textup{argmax}}_{\rho\in\Hb} \left\{ - c_1(\xi_s^*,\rho) - Lv(\xi_s^*,\rho) \right\},$$
$\P$-a.s.~for almost every $s$, the claim follows.
\end{proof}

	\subsection{Optimal Skorokhod embedding problems} \label{sec:SEP}
	
	\subsubsection*{Skorokhod embedding problems and MVMs}
	
	Given $\mu\in\Pcal_1(\R)$ which is centered around zero, the classical Skorokhod embedding problem (SEP) is to find a (minimal) stopping time $\tau$ such that $B_\tau\sim\mu$ where $B$ is a Brownian motion. Since the solution is non-unique one typically looks for solutions with specific optimality properties; we refer to \cite{obloj2004} for the history of the problem and an overview of various solutions and to \cite{beiglboeck2017} for the current state of the art.
	
	The idea of connecting the SEP with MVMs goes back to \cite{eldan2016}. To specify the connection, we define as follows: we say that an MVM $\xi$ is {\it terminating in finite time}, if 
  		\begin{align}\label{def:TMVM}
  			\tau_s:= \inf \{t>0: \xi_t \in \Pc^s\}<\infty,~\as{}
  		\end{align} 
 	Via the correspondences
	\begin{align*}
		\xi_t\;\widehat =\;\mathcal{L}(B_\tau|\Fc_t),\;t\ge 0,
		\quad\textrm{and}\quad
		\tau\;\widehat=\;\tau_s,
	\end{align*}
	there is then a one-to-one correspondence between solutions $\tau$ to SEP$(\mu)$ and finitely terminating MVMs $\xi$ with $\xi_0=\mu$ and $\Mb(\xi_t)=B_t$, $t<\tau_s$, where we write $\Mb(\mu):=\mu(\mathrm{id})$.

	\subsubsection*{Formulating SEPs as stochastic control problems}
	
	Here, given a cost function, our aim is to search for solutions to the SEP which are optimal within our class of controlled MVMs. 
	Specifically, we assume that $\mu\in\Pc_2(\R)$, take $q=2$, and consider admissible controls which in addition satisfy the following state-constraint for some $\kappa\in(0,1)$:
	\begin{align}\label{eq_state-constraint}
	\rho_t\in\Hb(\xi_t),\;t<\tau_s,\;
	\textrm{with}\;\;
	\Hb(\mu)=\left\{\rho\in\Hb:\Cov_{\mu}(\mathrm{id},\rho) \in (1-\kappa,1+\kappa) \right\};
	\end{align}
	we note that such state-constraints can be handled within our framework by adding a corresponding penalisation term to the cost function.

	MVMs which satisfy this state-constraint notably terminate in finite time. Indeed, $\Var(\xi_t)+\Mb(\xi_t)^2=\xi_t(\id^2)$ is a martingale since $\xi_0\in\Pc_2$. Letting $\langle\Mb(\xi_\cdot)\rangle$ denote the quadratic variation process of $\Mb(\xi_\cdot)$ and using that $\di\langle\Mb(\xi_\cdot)\rangle_t=\Cov_{\xi_t}(\mathrm{id},\rho_t)\dt$ we thus obtain 
	\begin{align}\label{eq:var_relation}
		(1-\kappa)\E\left[t\wedge\tau_s\right]
		\le 
		\E\left[\langle\Mb(\xi_\cdot)\rangle_{t\wedge\tau_s}\right]
		= \Var(\xi_0)-\E\left[\Var\left(\xi_{t\wedge\tau_s}\right)\right],
	\end{align}
	from which it follows that $\tau_s<\infty$ a.s.
	Any admissible control thus characterises a solution to the SEP for there is a unique time-change transforming any such MVM into a terminating one whose average evolves as a Brownian motion.\footnote{Equivalently, one can consider the following scaled version of \eqref{eq_MVM_SDE}:
  	\begin{align*}
		\di \xi_t(\varphi)=
		\frac{\Cov_{\xi_t}(\varphi,\rho_t)}{\Cov_{\xi_t}(\mathrm{id},\rho_t)}
		\di W_t,
		\quad
		\textrm{for all $\varphi\in C_b$},
		\;\; t<\tau_s;
	\end{align*}
	the embedding in \cite{eldan2016} was notably constructed by solving this equation for $\rho_t\equiv\mathrm{id}$, recall also Example~\ref{ex92}.}
	A similar time-change argument, combined with Theorem \ref{thm5}, ensures that the above class of state-constrained controls is non-empty. The corresponding optimisation problem is therefore well posed. 
		
	\begin{remark}\label{rem:B_sep}
		Given a (minimal) stopping time $\tau$, the MVM $\xi_t=\Lc(W_\tau|\Fc_t)$ satisfies $\Mb(\xi_t)=W_t$, $t\ge 0$. Moreover, if the filtration is Brownian, it is natural to expect $\xi$ to satisfy \eqref{eq_MVM_SDE} and thus also \eqref{eq_state-constraint}. However, if $\tau$ is not a stopping time in the Brownian filtration itself, even if $W_\tau\sim\mu$, it need not hold that $\Lc(W_\tau|\Fc^W_0)=\mu$.
		The fact that we here consider Brownian MVMs which satisfy both $\xi_0=\mu$ and \eqref{eq_state-constraint}, effectively imply that we are looking at `non-randomised' stopping times.
		Additional randomisation can be incorporated in our Brownian framework if one allows for controls for which $\Mb(\xi)$ may be constant; the Brownian motion is also then obtained by a time-change but its conditional distribution will feature a jump which is equivalent to the incorporation of additional information.
		To formalise this one needs to work with a different state-constraint (there are alternative conditions ensuring termination) or work with non-constrained solutions to \eqref{eq_MVM_SDE} and include a penalisation term or some alternative convention adapted to the problem at hand.
	\end{remark}

	\subsubsection*{An illustrating example: the Root and Rost problems}
	
	To illustrate how our control theory can be put to use, let $f:\R_+\to\R$ be a non-decreasing convex function and consider the problem of finding a (minimal) stopping time $\tau$, with $B_\tau\sim\mu$, minimising $\E[f(\langle B\rangle_\tau)]$. 
	It is well known that the general solution to this problem is given by the Root embedding; \cite{root1969} (see also \cite{kiefer1972,rost1976}). The corresponding problem where one maximises this expression is solved by the R{\"o}st embedding (see \cite{obloj2004}).
	
	Here, we are then looking for an admissible control, with $\xi_0=\mu$, which minimises $\E[f(\langle\Mb(\xi_\cdot)\rangle_{\tau_s})]$ among all such controls (since the quadratic variation is invariant with respect to time-changes, it does not matter that the average of our MVMs do not necessarily evolve as a Brownian motion). 
	It is clear that there is a trade-off between how much quadratic variation one has accumulated so far and how much of the terminal law that remains to be embedded; we define the value function associated with the conditional problem as follows:
	\begin{equation*}
			v(t,q,\mu):=\inf_{(\xi,\rho):\;\xi_t=\mu} \E\left[f\left(q+\int_t^{\tau_s}\Cov_{\xi_s}(\mathrm{id},\rho_s)^2\ds\right)\right],
	\end{equation*}
	where the infimum is taken over the state-constrained admissible controls.
	It is clear that $v$ is in fact independent of $t$.

	Compared to our standard framework, there is now an additional stochastic factor appearing in the value function, and the associated domain and boundary conditions are of a modified form. We expect, nevertheless, results parallel to our previous ones to hold; the associated HJB-equation takes the following form: 
	\begin{equation}\label{eq:hjb_root_general}
	-\inf_{\rho\in\Hb(\mu)}\left\{
	\Cov_{\mu}(\mathrm{id},\rho)^2\pian{v}{q}(q,\mu)+Lv(q,\cdot)(\mu,\rho)\right\}
	= 0,
	\;\;\textrm{$v(q,\mu)=f(q)$, $\mu\in\Pc^s$}.
  \end{equation}  
  In the particular case $f=\mathrm{id}$, we have that $v(q,\mu)=q+\Var(\mu)$; indeed, for any admissible control with $\xi_0=\mu$, $\E[\langle\Mb(\xi_\cdot)\rangle_{\tau_s}]=\Var(\mu)$ (cf. \eqref{eq:var_relation}). 
  Hence, $\partial v/\partial q=1$, $\partial^2v/\partial\mu^2(x,y)=-2xy$ and $Lv(\mu,\rho)=-\Cov_{\mu}(\mathrm{id},\rho)^2$. As expected, the infimum in \eqref{eq:hjb_root_general} is therefore attained for each $\rho\in\Hb(\mu)$.

	\subsection{Robust pricing problems}

	\subsubsection*{Robust price bounds and MVMs}
	
	In mathematical finance, a central problem is to derive so-called robust price bounds. While classical approaches to option pricing rely on the specification of a market model, robust approaches acknowledge that a true model is not known. Meanwhile, there is consensus that fundamental no-arbitrage principles imply that the underlying asset prices should be martingales in any sensible (risk neutral) model. In addition, it is natural to restrict to models for which the prices of liquidly traded call options match actual market prices. Based on an old observation by Breeden and Litzenberger, the latter implies that the underlying price processes should fit certain marginal constraints. 
	
	Put together, given an exotic (path-dependent) option specified by a payoff function $\mathrm{\Psi}: C([0,T],\R)\to\R$, and a fixed marginal constraint $\mu\in\Pc$  (derived from market prices), a natural bound on the price of $\mathrm{\Psi}$ is obtained by maximising 
		\begin{align}\label{eq:robust_problem}
			\E\left[\mathrm{\Psi}\left((S_t)_{t\le T}\right)\right],
		\end{align}
	over probability spaces $(\Omega,\Hc,(\Hc_t)_{t\in[0,T]},\P)$ satisfying the usual conditions and supporting a \cadlag{} martingale $(S_t)_{t\le T}$ with $S_T\sim\mu$; we refer to \cite{hobson2003} for further motivation and an overview of some well-known bounds. 
	
	The study of this problem dates back to \cite{hobson1998} where it was solved for so-called lookback options depending on the past maximum of the underlying; the approach relied on the observation that since such payoffs are invariant with respect to time-changes, the pricing problem is equivalent to a certain optimal SEP. 
	 In \cite{cox2017} it was observed that the problem can be reformulated as an optimisation problem over MVMs starting off in $\mu$ and terminating at $T$. The equivalence rests on the following correspondences: 
	\begin{align*}
		\xi_t\;\widehat=\;\Lc(S_T|\Hc_t)
		\quad\textrm{and}\quad
		S_t\;\widehat=\;\Mb(\xi_t),
		\quad t\le T.
	\end{align*}
	The reformulation allows the problem to be addressed by use of dynamic programming arguments and the method thus requires neither time-invariance nor convexity of the payoff. 
	Here, the aim is to formulate this MVM-version of the pricing problem as a stochastic control problem within our framework.

	\subsubsection*{Formulating robust pricing problems as stochastic control problems}
	
	To put the problem into our framework, we choose to view it as a stochastic control problem on an (artificial) time-scale, say $r\ge 0$, on which two factor processes evolve: $(T_r)_{r\ge 0}$ governing current real time and $(\xi_r)_{r\ge 0}$ governing the law which currently remains to be embedded. The associated price process $(S_t)_{t\in[0,T]}$ is then defined via the correspondence 
	\begin{align*}
		S_{T_r}\widehat =\; \Mb(\xi_r).
	\end{align*}	
	More precisely, we consider tuples consisting of a filtered probability space $(\mathrm{\Omega},\Fc,\Fb,\P)$, a Brownian motion $W$, a continuous MVM $\xi$ taking values in $\Pcal$, a real-valued process $T$, and two progressively measurable processes $\rho$ and $\lambda$ taking values in $\Hb$ and $[0,1]$, respectively, such that for 
	$r<\tau_s:=\inf\left\{r>0:T_r\ge T\textrm{ or }\xi_r\in\Pc^s\right\}$, the following relations hold:
	 \begin{align}\label{eq:control_robust_T}
		\di T_r=\lambda_r\di r,
		\quad 
		\rho_r\in\Hb(\xi_r),
	\end{align}
	and
	\begin{align}\label{eq:control_robust_xi}
	\di\xi_r(\varphi)=\sqrt{1-\lambda_r}~
	\Cov_{\xi_r}(\varphi,\rho_r)~
	\di W_r,
	\quad\varphi\in C_b.
	\end{align}		
	Given such a control, using the right-continuous inverse of $T$, we define $S_t=\Mb(\xi_\cdot)_{T^{-1}_t}$; 
	we employ the convention that if $\xi_{\tau_s}\not\in\Pc^s$ then $S$ realises a jump at $t=T$, and if $T_{\tau_s}<T$ then $S$ stays constant on $(\tau_s,T]$.
	Due to the state-constraint, $\tau_s<\infty$ a.s., and each admissible control thus defines a feasible price process $(S_t)_{t\in[0,T]}$. The problem of optimising over this class of price processes is therefore non-trivial and well posed. 
	Put into words, the controlled MVM governs how the conditional distribution of the process' terminal value -- $S_T$ --  evolves. 
	The presence of $\lambda$ allows however for a separate control of a time-change; this is convenient for it enables disentangling the control of the direction in which the MVM moves (controlled by $\rho$) from the speed at which it evolves (controlled by $\lambda$) with the extreme cases $\lambda =0$ and $\lambda=1$, respectively, corresponding to movement in the MVM only (the underlying realising a jump) or real time only (the underlying staying constant).

	\begin{remark}\label{rem:B_robust}
		Since \cadlag{} martingales can be written as time-changed Brownian motions, the robust pricing problem \eqref{eq:robust_problem} can be shown to be equivalent to an optimisation problem over time-changes and MVMs satisfying $\xi_0=\mu$.
		In general, the filtration needed for this is however bigger than the Brownian filtration itself. The fact that we here consider solutions to \eqref{eq:control_robust_T} -- \eqref{eq:control_robust_xi} with $\xi_0=\mu$, effectively means that we consider a class of potential market models for which the Brownian filtration does suffice for this procedure.
		In \cite{cox2017}, it was argued that for Asian options this restriction will not affect the robust price bounds; we expect similar arguments to apply also to other options. 
		Additional randomisation can however be incorporated within our Brownian framework by allowing for more general MVMs; see Remark \ref{rem:B_sep}. 
	\end{remark}

    \subsubsection*{An illustrating example: the Asian option}
    
	To illustrate how our control theory can be used to address this problem, we here specify the argument for the so-called Asian option. For a finitely supported $\mu$, this problem was solved by use of MVMs in \cite{cox2017} and the equations below are continuous analogues of the results derived therein.
	
	Given a function $F:\R\to\R$, the payoff of an Asian option is given by
     \begin{align*}
		\mathrm{\Psi}\left((S_t)_{t\in[0,T]}\right)=F\bigg(\int_0^TS_t\dt\bigg);
     \end{align*}
     it is notably not invariant with respect to time-changes. 
     In order to obtain a Markovian structure, it is convenient to introduce a state-variable governing the accumulated average. Hence, we introduce a factor-process $A$ with dynamics 
	\begin{align*}
		\di A_r=\lambda_r\Mb(\xi_r)\di r,
		\quad r<\tau_s.
	\end{align*}
	The problem then amounts to maximise $\E[F(A_{\tau_s}+\Mb(\xi_{\tau_s})(T-\tau_s))]$
	over the class of admissible controls defined by \eqref{eq:control_robust_T} -- \eqref{eq:control_robust_xi}.
	The associated value function is given by
	\begin{align*}
		v(r,t,a,\mu):=\sup_{\stackrel{(\xi,\rho,T,\lambda):}{(T_r,A_r,\xi_r)=(t,a,\mu)}}
		\E\big[F\big(A_{\tau_s}+\Mb(\xi_{\tau_s})(T-\tau_s)\big)\big];
	\end{align*}
	we note that it is independent of $r$ and simply write $v(t,a,\mu)$. 
	In analogy to our previous results, we expect this value function to be linked to the equation 
	\begin{align*}
	-\sup_{(\rho,\lambda)\in\Hb(\mu)\times[0,1]}
	\left\{\lambda\left(\pian{v}{t}+\mathbb{M}(\mu)\pian{v}{a}\right)(t,a,\mu)
      +(1-\lambda)Lv(t,a,\cdot)(\mu,\rho)\right\}
      =0,
	\end{align*} 
      which, in turn, can be re-written as follows:
     	\begin{align}\label{eq:hjb_asian}
     	\left\{\begin{array}{rcl}
          		0 &=& -\max\bigg\{\left(\pian{v}{t}+\mathbb{M}(\mu)\pian{v}{a}\right)(t,a,\mu)~,~
          		\sup\limits_{\rho\in\Hb(\mu)}Lv(t,a,\cdot)(\mu,\rho)
          		\bigg\},\\ 
          		v(t,a,\mu) &=& F\left(a+\Mb(\mu)(T-t)\right),\quad\textrm{$\mu\in\Pc^s$ or $t=T$}.
		\end{array}
		\right.
		\end{align}		

	We see that for the case of Asian options, the supremum is always attained for $\lambda\in\{0,1\}$ which implies that market models attaining the price bound will be constant over certain intervals and then feature jumps. This is due to the particular structure of the Asian option and need in general not be the case.

	\subsection{Zero-sum games with incomplete information} \label{sec:ZSG}
	Our results are also closely related to results on certain two-player zero-sum games which feature asymmetry in the information available to the players. The study of such problems dates back to \cite{aumann1995}. In \cite{cardaliaguet_rainer2009,cardaliaguet2012}, such games were studied in a continuous time setup and linked to optimisation problems featuring MVMs; we briefly recall their setup. At the beginning of the game, the payoff function is randomly chosen -- according to a given distribution -- among a family of parameter-dependent payoff functions; the outcome is communicated only to the first player while the second only knows the probability distribution it was drawn from. One player is then trying to minimise and the other to maximise the expected payoff (which depends on the players' actions).  Since the actions are visible to both players, the uninformed player will try to deduce information about the actual payoff function from the actions of the first player; she will then act optimally based on this information. Since the first player is aware of this, it turns out that the problem can be formulated as an optimisation problem over the second player's beliefs about the game. In effect, the first player is controlling the game by choosing how much information to reveal in order to optimally steer the second player's beliefs. The problem is thus equivalent to an optimisation problem over the process representing the belief of the second player processes -- which are measure-valued martingales.

	Specifically, it was shown in \cite[Theorem~3.2]{cardaliaguet2012} that the value of the game admits the following equivalent formulation (we also refer to \cite[Theorem~3.1]{cardaliaguet_rainer2009} for the case of finitely many payoff functions and thus atomic MVMs):
	\begin{align*}
		\inf_{\textrm{MVMs $(\eta_t)_{t\ge 0}:\eta_0=\mu$}}
		\E\Big[\int_0^Th(t,\eta_t)\dt\Big],
	\end{align*}
	where
	\begin{align}\label{eq:cost_games}
		h(t,\mu):=\inf_{u\in\mathcal{U}}\sup_{v\in\mathcal{V}}
		\mu\big(l(\cdot,t,u,v)\big);
	\end{align}
	here $l$ is the given (parameter-dependent) payoff function and $\mathcal{U}$ and $\mathcal{V}$ are the state-spaces of the respective players' controls. These results require the Isaacs assumption, that is, the infimum and supremum in \eqref{eq:cost_games} can be interchanged.

	It is of course possible to formulate this problem within our stochastic control framework, provided we restrict to beliefs processes represented via time-changes and solutions to our SDE; that is, MVMs $\eta$ which admit the representation
	\begin{align*}
		\eta_t=\xi_{T^{-1}_t},\quad t\in[0,T],
	\end{align*}
	where $\T_\cdot$ and $\xi_\cdot$ are given by \eqref{eq:control_robust_T} and \eqref{eq:control_robust_xi} for some admissible control $(\lambda,\rho)$. Optimising (in a weak sense) over such controls, yields the following HJB-type equation (closely related to \eqref{eq:hjb_asian}; see also \cite[Section~4]{cardaliaguet2012}) for the associated value function:
	
        \begin{align*}
          \min\bigg\{\pian{v}{t}(t,\mu)+h(t,\mu),
          \inf_{\rho\in\Hb(\mu)}Lv(t,\cdot)(\mu,\rho)
          \bigg\}=0,
          \quad v(T,\mu)=0,\;\mu\in\Pc.
        \end{align*}	 
        
        We stress that our arguments do not require convexity of the value-function and in contrast to the results in \cite{cardaliaguet2012}, they should thus apply also to generalisations of the game leading to non-convex value functions. We briefly outline one possible such extension here (although we leave details to subsequent work). Suppose in the framework of the game above, the informed player were further incentivised not to reveal information to the uninformed player through an additional cost relating to the strength of the control exerted in the uninformed player's belief process. Assuming that the analysis of \cite{cardaliaguet_rainer2009} and \cite{cardaliaguet2012} carries through in much the same manner, one might end up considering the optimisation problem:
	\begin{align*}
		\inf_{\textrm{MVMs $(\eta_t)_{t\ge 0}:\eta_0=\mu$}}
		\E\Big[\int_0^T \left (h(t,\eta_t)+c(\rho_t)\right) \dt\Big],
	\end{align*}
	where $\rho$ is the control of the MVM $\eta$, and $c$ represents the cost to the informed player of controlling the MVM in the direction $\rho$. This would formally give rise to the HJB equation
        \begin{align*}
          \pian{v}{t}(t,\mu)+h(t,\mu)
          + \inf_{\rho\in\Hb(\mu)}\left\{Lv(t,\cdot)(\mu,\rho)+c(\rho)\right\}
          =0,
          \quad v(T,\mu)=0,\;\mu\in\Pc.
        \end{align*}	 
        The addition of the cost term in the second half of the HJB equation means that the value function is no longer required to be convex.

\begin{appendix}

\section{The dynamic programming principle}\label{app:DPP}

	In this appendix we establish the dynamic programming principle for our problem of study (cf. Theorem \ref{thm_dpp_main_text}); following e.g. \cite{karoui2013_1,karoui2013_2,zitkovic2014}, see also \cite{nutz2013} or \cite{neufeld2013}, we acknowledge that it is often easier to prove the DPP by working on a canonical path space and concatenate measures rather than processes.
	Recall that we have fixed $p \in [1,\infty) \cup \{0\}$, $q \in [1,p] \cup \{0\}$, and a Polish space $\Hb$ of measurable real functions on $\R^d$ that satisfies the standing assumption that the evaluation map $(\bar\rho,x) \mapsto \bar\rho(x)$ from $\Hb \times \R^d$ to $\R$ is measurable. 
	Writing $M$ for the set of Borel measures on $\R_+\times\Hb$, we define
	\[
		\Mb=\left\{m\in M \colon \textrm{$m(\ds,\du)=\tilde m(s,\du)\ds$ for some kernel $\tilde m$}\right\}
	\]
	and 
	\[
	\Mb_0=\left\{m\in\Mb:m(\ds,\du)=\delta_{\tilde\rho(s)}(\du)\ds\textrm{ for some measurable function $\tilde\rho$}\right\};
	\]
	we equip $\Mb$ with the same topology as in \cite[Remark~1.4]{karoui2013_2} rendering it a Polish space.
	The canonical path space is now given by the Polish space
	\begin{equation*}
		{\mathrm{\Omega}}:= C(\R_+,\R)\times C(\R_+,\Pc_p)\times\Mb.
	\end{equation*}
	The set of all Borel probability measures on ${\mathrm{\Omega}}$ is denoted by $\Pk$ and under the weak convergence topology it is a Polish space too. 
	A generic element of $\mathrm{\Omega}$ is denoted by $\omega=(B,\xi,m)$ and we use the same notation for the canonical random element.
	We note that since $\Hb$ is Polish, it is isomorphic to a Borel subset of $[0,1]$; we let $\psi \colon \Hb\to[0,1]$ be the bijection between $\Hb$ and $\psi(\Hb)\subseteq[0,1]$ and define $\chi:\R\to\Hb$ by
	\begin{equation*}
		\chi(x)=\left\{\begin{array}{lll}
		\psi^{-1}(x) & x\in\psi(\Hb)\\
		\bar\rho & x\not\in\psi(\Hb),
		\end{array}\right.
	\end{equation*}	
	where $\bar\rho$ is some fixed element of $\Hb$. 
	In turn, let $\rho:\mathrm{\Omega}\to\Bc(\R_+,\Hb)$ be given by
		\begin{equation*}
			\rho_t:=\chi\left(\frac{\partial}{\partial t}\int_0^t\int_\Hb\psi(u)m(\ds,\du)\right), \quad t\ge 0, 
                      \end{equation*}
         where the derivative is taken as the $\liminf$ of differences from the left.
	If $m\in\Mb_0$, and thus of the form $m(\ds,\du)=\delta_{\tilde\rho(s)}(\du)\ds$ for some $\tilde\rho\in\Bc(\R_+,\Hb)$, then $\rho_\cdot=\tilde\rho(\cdot)$ Lebesgue-a.e.
	We denote by $\Fb^0=(\Fc^0_t)_{t\ge 0}$ the canonical filtration given by 
		\begin{equation*}
			\Fc^0_t:=\sigma\left\{B_r,\xi_r, \int_0^r \int_\Hb \phi(u) \, m(\di s,\du) : \phi\in C_b(\Hb,\R_+),\; r\le t\right\}.
		\end{equation*}
The $\Hb$-valued process $\rho_t(\omega)$ is then progressively measurable and hence, because the evaluation map $(\bar\rho,x)\mapsto \bar\rho(x)$ from $\Hb \times \R^d$ to $\R$ is measurable, it is also a progressively measurable function.
	 For $\mu\in\Pc_p$, we then define $\Pk_\mu$ to be the set of measures $\Pp\in\Pk$ which satisfy the following properties; here $ C^\infty_0(\R\times\R)$ denotes the set of smooth functions in $C(\R\times \R)$ vanishing at infinity:
	\begin{enumerate}
		\item\label{it3i}  $\Pp$-a.s., $\xi_0=\mu$ and $m\in\Mb_0$, and thus $m(\ds,\du)=\delta_{\rho(s)}(\du)\ds$;
		\item\label{it3ii} $\Pp\otimes\dt$-a.s. $\xi_t(|\rho_t|)<\infty$ and
		\begin{equation}\label{eq:DPP_worthy}
			\int_0^t\left(\int_{\R^d} (1 + |x|^q) \left|\rho_s(x)-\xi_s(\rho_s)\right| \xi_s(\dx)\right)^2\ds<\infty;
		\end{equation}
		\item\label{it3iii} for every $f\in C^\infty_0(\R\times\R)$ and $\varphi\in C_b({\R^d})$, the following process is a $(\Fb^0,\Pp)$-local martingale, where $\sigma_t=(1,\sigma_t(\varphi))^T$ with $\sigma_t(\varphi)=\Cov_{\xi_t}(\varphi,\rho_t)$:
	\begin{equation}\label{eq:mp_simplified}
		f\left(B_t,\xi_t(\varphi)\right)-\int_0^t\frac{1}{2}\sum_{i,j=1}^2\frac{\partial^2f}{\partial x_i\partial x_j}\left(B_s,\xi_s(\varphi)\right)\left(\sigma_s\sigma_s^T\right)_{ij}\ds,
		\quad t\ge 0.
	\end{equation}	
	\end{enumerate}

	Our control problem then admits the following equivalent representation: 

	\begin{lemma}\label{lem:dpp_equiv_problems}
For the value function $v$ defined in \eqref{eq_value_function}, it holds that
	\begin{equation}\label{eq:vdefn}
		v(\mu)
		=\inf_{\Pp\in\Pk_\mu}\E^\Pp\left[\int_0^\infty e^{-\beta t}c(\xi_t,\rho_t)\dt\right],
		\quad \mu\in \Pc_p.
	\end{equation}
	\end{lemma}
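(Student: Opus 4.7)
The plan is to establish both inequalities by constructing, in each direction, a correspondence between admissible controls and elements of $\Pk_\mu$ that preserves the cost functional. Since the cost $\E[\int_0^\infty e^{-\beta t}c(\xi_t,\rho_t)\dt]$ only depends on the joint law of $(\xi,\rho)$ under the respective probability measure, the cost-matching part of the argument will be automatic once the two sides of the correspondence are shown to yield the same distribution of $(\xi,\rho_\cdot)$ on $C(\R_+,\Pcal_p)\times\Bc(\R_+,\Hb)$.

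For the direction ``$v(\mu)\ge$ RHS'', start from an admissible control $(\bar\Omega,\bar\Fc,\bar\Fb,\bar\P,\bar W,\bar\xi,\bar\rho)$ with $\bar\xi_0=\mu$. Define a map $\Phi\colon\bar\Omega\to\mathrm{\Omega}$ by $\Phi(\bar\omega)=(\bar W(\bar\omega),\bar\xi(\bar\omega),\bar m(\bar\omega))$, where $\bar m(\bar\omega)(\ds,\du)=\delta_{\bar\rho_s(\bar\omega)}(\du)\ds$. Measurability of $\Phi$ follows from the progressive measurability of $(\bar W,\bar\xi,\bar\rho)$ together with the fact that $\bar\rho$ takes values in the Polish space $\Hb$, so that the induced random element of $\Mb_0$ is Borel measurable. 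Set $\P:=\bar\P\circ\Phi^{-1}$. Property~\ref{it3i} holds by construction. Property~\ref{it3ii} is just condition \eqref{eqn3} pulled back to the canonical space, using that on $\Mb_0$ one has $\rho_\cdot=\bar\rho_\cdot$ Lebesgue-a.e. To obtain property~\ref{it3iii}, apply the real-valued It\^o formula under $\bar\P$ to $f(\bar W_t,\bar\xi_t(\varphi))$ for $f\in C^\infty_0(\R\times\R)$ and $\varphi\in C_b$, recalling that $\bar\xi_t(\varphi)=\bar\xi_0(\varphi)+\int_0^t\Cov_{\bar\xi_s}(\varphi,\bar\rho_s)\d\bar W_s$. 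The bounded integrand in front of $\d\bar W_t$ makes this a true martingale under $\bar\P$, hence a local martingale under $\P$. Thus $\P\in\Pk_\mu$, and since $(\xi,\rho)$ under $\P$ has the same law as $(\bar\xi,\bar\rho)$ under $\bar\P$, the costs coincide, giving $v(\mu)\ge$ RHS.

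For the reverse direction, take $\P\in\Pk_\mu$ and aim to verify that $(\mathrm{\Omega},\Fc,\Fb^0,\P,B,\xi,\rho)$ is itself an admissible control, where $\Fb$ is the $\P$-augmentation of $\Fb^0$. The progressive measurability of $\rho$ on the canonical space is exactly the construction before the definition of $\Pk_\mu$. What has to be extracted from the martingale condition~\ref{it3iii} is (a) that $B$ is a standard Brownian motion, and (b) that for every $\varphi\in C_b$,
\[
\xi_t(\varphi)=\xi_0(\varphi)+\int_0^t\Cov_{\xi_s}(\varphi,\rho_s)\d B_s.
\]
This is done by choosing, for each compact $K\subset\R\times\R$, a function $f\in C_0^\infty(\R\times\R)$ that agrees with each of the coordinates $x$, $y$, $x^2$, $y^2$, $xy$ on $K$, and then localising by the exit time $\tau_K$ of $(B,\xi(\varphi))$ from $K$. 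For fixed $\varphi\in C_b$, this is legitimate because $\xi(\varphi)$ is bounded by $\|\varphi\|_\infty$, so only $B$ needs localising. From~\ref{it3iii} applied to $f\equiv x$, $f\equiv x^2$, $f\equiv y$, $f\equiv y^2$, $f\equiv xy$ (in the localised sense) we deduce in turn that $B$ is a continuous local martingale with $\langle B\rangle_t=t$, that $\xi(\varphi)$ is a continuous local martingale with $\langle\xi(\varphi)\rangle_t=\int_0^t\Cov_{\xi_s}(\varphi,\rho_s)^2\ds$, and that $\langle B,\xi(\varphi)\rangle_t=\int_0^t\Cov_{\xi_s}(\varphi,\rho_s)\ds$. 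L\'evy's characterisation gives that $B$ is a Brownian motion, and the Kunita--Watanabe identity (or representation of a local martingale against $B$ via its covariation with $B$) yields the stochastic integral representation above. Hence $(\xi,\rho)$ is a weak solution of \eqref{eq_MVM_SDE}, and \eqref{eq:DPP_worthy} is precisely \eqref{eqn3}, so the control is admissible. Its cost under $\P$ is the integrand in the right-hand side of \eqref{eq:vdefn}, which therefore dominates $v(\mu)$.

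The main obstacle is the argument for (b) above: the test functions in~\ref{it3iii} lie in $C_0^\infty(\R\times\R)$, so they do not directly produce moment identities for $(B,\xi(\varphi))$. The localisation device just described handles this cleanly because for fixed $\varphi\in C_b$ the second coordinate $\xi(\varphi)$ takes values in the compact interval $[-\|\varphi\|_\infty,\|\varphi\|_\infty]$, reducing the problem to a one-dimensional localisation in the $B$-variable. Once this is in place, everything else is a direct consequence of the definitions and of the measurability properties built into the setup.
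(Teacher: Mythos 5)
Your proof is correct and follows essentially the same route as the paper's: push an admissible control forward to the canonical space and verify \ref{it3i}--\ref{it3iii} via It\^o's formula, and conversely read off $\langle B\rangle$, $\langle\xi(\varphi)\rangle$ and $\langle B,\xi(\varphi)\rangle$ from the martingale problem to obtain L\'evy's characterisation and the stochastic-integral identity (the paper phrases this last step as showing $(X^\varphi_t-\xi_t(\varphi))^2$ is a local martingale, which is exactly the zero-quadratic-variation computation you invoke). One cosmetic slip in the forward direction: the integrand $\partial_x f+\partial_y f\,\Cov_{\xi_s}(\varphi,\rho_s)$ need not be bounded since $\Cov_{\xi_s}(\varphi,\rho_s)$ is not, so the process is in general only a local martingale under $\bar\P$ --- but that is all property \ref{it3iii} requires, so the argument is unaffected.
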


	\begin{proof}
		First, by use of Theorem \ref{T_Ito}, we immediately obtain that any admissible control $(\mathrm{\Omega},\Fc,(\Fc_t)_{t\ge 0},\P,W,\xi,\rho)$, with $\xi_0=\mu$, $\P$-a.s., induces a measure $\Pp\in\Pk_\mu$.
		
		Conversely, given $\Pp\in\Pk_\mu$, define ${\mathrm{\Omega}}_0= C(\R_+,\R)\times C(\R_+,\Pc_p)\times\Mb_0$, $\Fc=\Bc({\mathrm{\Omega}})\cap{\mathrm{\Omega}}_0$ 
		and let $\Fb=(\Fc_t)_{t\ge 0}$ be the $\Pp$-augmentation of $\Fb^0$. 
	On the filtered probability space $(\Omega_0,\Fc,\Fb,\Pp)$, $\rho$ then defines a progressively measurable $\Hb$-valued stochastic process and a progressively measurable function. 
	To show that the tuple $({\mathrm{\Omega}}_0,\Fc,\Fb,\Pp,B,\xi,\rho)$ is an admissible control, it only remains to show that $B$ is a Brownian motion and that \eqref{eq_MVM_SDE} holds.	
		To this end, note that the (local) martingale property is preserved when considering the augmented filtration. Hence, with $\sigma_t(\varphi)=\Cov_{\xi_t}(\varphi,\rho_t)$, the process given in \eqref{eq:mp_simplified} is a $(\Fb,\Pp)$-local martingale. 
		It follows that $\di\langle B\rangle_t=\dt$, $\di\langle B,\xi(\varphi)\rangle_t=\sigma_t(\varphi)\dt$ and $\di\langle \xi(\varphi)\rangle_t=\sigma_t(\varphi)^2\dt$, where $\langle B\rangle$ and $\langle \xi(\varphi)\rangle$ denote the quadratic variation process of $B$ and $\xi(\varphi)$, respectively, and $\langle B,\xi(\varphi)\rangle$ denotes the corresponding quadratic covariation process. In particular, $B$ is a Brownian motion. Further, defining
		\begin{equation*}
			X^\varphi_t:=\mu(\varphi)+\int_0^t \sigma_s(\varphi)\di B_s,
			\quad \varphi\in C_b({\R^d}),
		\end{equation*}
		it holds that $\left(X^\varphi_t-\xi_t(\varphi)\right)^2$ is a local martingale. Hence, $X^\varphi$ and $\xi(\varphi)$ are indistinguishable which completes the proof. 
	\end{proof}

	To obtain the DPP we first establish some properties of the sets $\Pk_\mu$, $\mu\in\Pc_p$.

	\begin{lemma}\label{lem:analytic}
The graph $\{(\mu,\Pp):\mu\in\Pc_p,\Pp\in\Pk_\mu\}$ is a Borel set in $\Pc_p\times\Pk$. 
	\end{lemma}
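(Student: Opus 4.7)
The plan is to write the graph as the intersection $G_1 \cap (\Pcal_p \times (G_2 \cap G_3 \cap G_4))$, where each $G_k$ encodes one of the conditions defining $\Pk_\mu$, and then verify each piece is Borel. Specifically, let
\begin{align*}
G_1 &= \{(\mu,\Pp) \in \Pcal_p \times \Pk \colon \Pp \circ \xi_0^{-1} = \delta_\mu\}, \\
G_2 &= \{\Pp \in \Pk \colon \Pp(m \in \Mb_0) = 1\}, \\
G_3 &= \{\Pp \in \Pk \colon \text{the integrability conditions in \ref{it3ii} hold}\}, \\
G_4 &= \{\Pp \in \Pk \colon \text{the local-martingale condition \ref{it3iii} holds}\}.
\end{align*}

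\textbf{Handling $G_1$.} Since $\xi_0\colon{\mathrm{\Omega}}\to\Pcal_p$ is continuous, the push-forward map $\Pp \mapsto \Pp\circ\xi_0^{-1}$ from $\Pk$ to $\Pk(\Pcal_p)$ is continuous. The map $\mu \mapsto \delta_\mu$ from $\Pcal_p$ to $\Pk(\Pcal_p)$ is also continuous, so its graph is closed, and pulling back yields that $G_1$ is closed in $\Pcal_p \times \Pk$.

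\textbf{Handling $G_2$.} It suffices to show $\Mb_0 \subset \Mb$ is Borel; then $G_2 = \{\Pp \colon \Pp(\Mb_0)=1\}$ is Borel since $\Pp \mapsto \Pp(A)$ is Borel for every Borel $A$. I would characterize $\Mb_0$ by the requirement that the disintegrating kernel $\tilde m(s,\cdot)$ be a Dirac on $\mathbb H$ for Lebesgue-a.e.\ $s$. Picking a countable family $\{\phi_k\} \subseteq C_b(\Hb)$ which separates points (exploiting that $\Hb$ is Polish), this is equivalent to
\[
\int_0^t \phi_k(u)^2\, m(\ds,\du) = \int_0^t\! \Big(\int_\Hb \phi_k(u)\, \tilde m(s,\du)\Big)^2 \ds,\qquad \text{for all } t \in \mathbb Q_+, \ k\in\mathbb N,
\]
where the inner integral is written in a $\Fcal^0_t$-adapted way using the canonical coordinates. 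Each identity is Borel in $m$, so $\Mb_0$ is a countable intersection of Borel sets.

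\textbf{Handling $G_3$.} The integrand in \eqref{eq:DPP_worthy} and the map $(\omega,s)\mapsto \xi_s(|\rho_s|)$ are jointly Borel in $(\omega,s)$ because $\rho$ is progressively measurable and the evaluation map $(\bar\rho,x)\mapsto\bar\rho(x)$ is measurable. Thus the sets
\[
A_T = \Big\{\omega \colon \int_0^T \bm 1_{\{\xi_s(|\rho_s|)=\infty\}}\ds = 0 \Big\}, \qquad B_T = \Big\{\omega \colon \eqref{eq:DPP_worthy} \text{ holds at } t=T \Big\}
\]
are Borel in $\mathrm{\Omega}$, and $G_3 = \bigcap_{T\in\mathbb N}\{\Pp \colon \Pp(A_T \cap B_T)=1\}$ is Borel.

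\textbf{Handling $G_4$.} This is the standard Borel measurability of the solution set of a martingale problem; I would use the localisation approach of \cite{stroock_varadhan}-type arguments. Pick countable dense families $\{f_i\} \subset C^\infty_0(\R\times\R)$ and $\{\varphi_j\}\subset C_b({\R^d})$ (with respect to the topologies which make condition~\ref{it3iii} stable under approximation). For each $(i,j)$ denote by $Z^{i,j}$ the process in \eqref{eq:mp_simplified} (which is adapted, continuous and a Borel functional of the canonical data), and let $\tau^{i,j}_n=\inf\{t\colon |Z^{i,j}_t|>n\}\wedge n$. Then $\Pp\in G_4$ if and only if for all $i,j,n$, all rationals $s<t$, and all $B$ in a countable $\pi$-system generating $\Fcal^0_s$,
\[
\E^\Pp\Big[(Z^{i,j}_{\tau^{i,j}_n\wedge t}-Z^{i,j}_{\tau^{i,j}_n\wedge s})\bm 1_B\Big]=0.
\]
Each such equality is a Borel condition in $\Pp$ (the stopped process is bounded, and $\Pp\mapsto\E^\Pp[\cdot]$ is Borel on bounded Borel functionals), so $G_4$ is a countable intersection of Borel sets.

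The main obstacle will be the bookkeeping in Step for $G_4$: one must check that the countable collection of test pairs $(f_i,\varphi_j)$ really is sufficient to recover the full condition~\ref{it3iii}, and that the localising sequence can be chosen uniformly in $\Pp$ by a first-exit-time construction so that the measurability is genuinely countable. Once those details are in place, assembling $G_1 \cap (\Pcal_p \times (G_2 \cap G_3 \cap G_4))$ gives the claimed Borel graph.
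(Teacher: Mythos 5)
Your proposal is correct and follows essentially the same strategy as the paper: split the graph according to the three defining properties of $\Pk_\mu$, reduce each to countably many Borel conditions on $(\mu,\Pp)$ (measurability of $\Mb_0$, joint measurability of the integrands in \ref{it3ii}, and for \ref{it3iii} a countable dense family of test pairs together with first-exit-time localisation and a countable generating system for $\Fcal^0_s$). The only real deviation is in condition \ref{it3i}, where you use continuity of the push-forward $\Pp\mapsto\Pp\circ\xi_0^{-1}$ and closedness of the graph of $\mu\mapsto\delta_\mu$ — a slightly cleaner encoding than the paper's embedding of $\Pcal_p$ into $[0,1]$ via a Borel bijection and the variance/mean conditions, but it yields the same conclusion.
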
 
	
	\begin{proof}
		We may consider each property separately and show that the subset of pairs $(\mu,\Pp)$ in $\Pc_p\times\Pk$ for which the property holds is a Borel set. 
		
		\ref{it3i}:
		We have that $\Mb_0$ is a Borel subset of $\Mb$; see e.g.\ \cite[Appendix]{karoui_nguyen_jeanblanc}.
		In analogy to the above, denote by $\tilde\psi$ and $\tilde\chi$ the bijection and its inverse between $\Pc_p$ and the set $\tilde\psi(\Pc_p)\subset[0,1]$. Note that 
		\begin{align*}
			\{(\mu,\Pp) \colon \Pp(\xi_0=\mu) & =1\}\\
			=&
			\big\{(\mu,\Pp): \Var^\Pp\big[\tilde\psi(\xi_0)\big]=0 
			\big\}
			\cap\big\{(\mu,\Pp): \E^\Pp\big[\tilde\psi(\xi_0)\big]=\tilde\psi(\mu)\big\}.
		\end{align*}
		Since $\Pp\mapsto\tilde\chi(\E^\Pp[\tilde\psi(\xi_0)])$ is a measurable function, its graph is a Borel set.
		In consequence, so is $\{(\mu,\Pp)\in\Pc_p\times\Pk:\textrm{$m\in\Mb_0$ and $\xi_0=\mu$, $\Pp$-a.s.}\}$. 
		
		\ref{it3ii}:
		The mapping $(\omega,t)\mapsto\xi_t(\omega)(|\rho_t(\omega)|)$ defines an extended-valued measurable function on ${\mathrm{\Omega}}\times[0,\infty)$; hence		
		\begin{equation*}
			A=
			\bigcap_{r\in\mathbb{Q}}
			\left\{\Pp\in\Pk:\Pp\left(\int_0^r
			\bm1_{\{\xi_s(|\rho_s|)=\infty\}}
			\ds=0\right)=1\right\}
		\end{equation*}
		is a Borel set. In consequence, so is
		\begin{equation*}
			\bigcap_{r\in\mathbb{Q}}
			\left\{\Pp\in A:\Pp\left(\int_0^r
			\left(\int_{\R^d}(1+|x|^q)\left|\rho_s(x)-\xi_s(\rho_s)\right|\xi_s(\dx)\right)^2
			\ds<\infty\right)=1\right\}.
		\end{equation*}
		Hence, the subset of measures in $\Pk$ for which \ref{it3ii} holds is a Borel set. 
		
		\ref{it3iii}:
		Given that property {\ref{it3ii}} holds, for $(\varphi_n)$
                converging in the bounded pointwise sense to
                $\varphi$, it holds that
                $\E[\int_0^t\Cov_{\xi_s}(\varphi_n-\varphi,\rho_s)^2\ds]\to
                0$; since $C_b({\R^d})$ has a countable dense subset in
                the sense of bounded pointwise convergence, it suffices to check {\ref{it3iii}} for $\varphi$ in a countable subset of $C_b({\R^d})$. There is also a countable subset of $C^\infty_0(\R\times\R)$ (dense with respect to pointwise convergence of the first and second derivatives) such that if {\ref{it3iii}} holds for any $f$ within that set, then it holds for any $f\in C^\infty_0(\R\times\R)$. 
		
		Denote now the continuous process in \eqref{eq:mp_simplified} by $(\omega,t)\mapsto M^{\varphi,f}_{t}(\omega)$, and note that $H_{\pm n} = \inf \{ s \ge 0 : |M^{\varphi,f}_s| \ge n\}$ is an $\mathbb{F}^0$-stopping time by continuity of the paths of $M^{\varphi,f}$. For $\varphi\in C_b(\R^d)$, $f\in C^\infty_0(\R\times\R)$, $r\le s$, $A\in\Fc^0_r$ and $n\in\mathbb{N}$, it then holds that
		\begin{equation*}
			\left\{\Pp\in\Pk: \E^\Pp\left[\left(M^{\varphi,f}_{s \wedge H_{\pm n}}-M^{\varphi,f}_{r \wedge H_{\pm n}}\right)\bm 1_A\right]=0\right\}
		\end{equation*}
		is a Borel set. 
		In consequence, so is the intersection of such sets when $\varphi$ and $f$ range through the above-mentioned countable subsets, $r,s$ and $n$ through the rationals, and $A$ through a countable algebra generating $\Fc^0_r$; this is sufficient to ensure property \ref{it3iii}. 	
		\end{proof}

	We call a collection $(\Pp_\mu)_{\mu\in\Pc}$ such that $\mu\mapsto\Pp_\mu$ is universally measurable and $\Pp_\mu\in\Pk_\mu$, $\mu\in\Pc_p$, an \emph{admissible kernel}. Given $\Pp\in\Pk$ and an admissible kernel $(\Pp_\mu)_{\mu\in\Pc}$, writing
	\begin{equation*}
		(\omega\otimes_t\omega')(s)=
		\left\{\begin{array}{ll}
		\omega(s) &s<t\\
		\omega'(s-t) &s\ge t
		\end{array}\right.,
		\quad \omega,\omega'\in{\mathrm{\Omega}}, 
	\end{equation*}
	we define for any random time $\tau:{\mathrm{\Omega}}\to\R_+$,
	\begin{equation*}
		(\Pp\otimes_\tau\Pp_\cdot)(A)
		=
		\int_{{\mathrm{\Omega}}\times{\mathrm{\Omega}}} \bm 1_A\left(\omega\otimes_{\tau(\omega)}\omega'\right)\Pp_{\xi_{\tau}(\omega)}(\di\omega')\Pp(\di\omega),
		\quad A\in\Bc({\mathrm{\Omega}}).
	\end{equation*}
	Our family $(\Pk_\mu)_{\mu\in\Pc_p}$ is then stable under disintegration and concatenation in the following sense; the proof is similar to that of \cite[Lemma~3.3]{karoui2013_2} or \cite[Proposition~2.5]{zitkovic2014} and we omit the details: 
	
	\begin{lemma}\label{lem:concatenation}
          Let $\tau$ be a
          finite $\Fb^0$-stopping time, $\bar\mu\in\Pc_p$ and $\Pp\in\Pk_{\bar\mu}$. 
          Then,
		\begin{enumerate}
			\item there exists an admissible kernel $(\Pp_\mu)_{\mu\in\Pc_p}$ such that $\Pp=\Pp\otimes_\tau\Pp_\cdot$; 
			\item conversely, given an admissible kernel $(\Pp_\mu)_{\mu\in\Pc_p}$, it holds that $\Pp\otimes_\tau\Pp_\cdot\in\Pk_{\bar\mu}$.
		\end{enumerate}
	\end{lemma}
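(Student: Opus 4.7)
The plan is to adapt the standard stability-under-disintegration and concatenation arguments for martingale problems on Polish path spaces, as in \cite{karoui2013_2,zitkovic2014}, to our setting, whose only novelties are the $\Pc_p$-valued coordinate $\xi$ and the relaxed-control coordinate $m$. Throughout, Lemma~\ref{lem:analytic} supplies the measurability needed to produce admissible kernels.

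For part (i), the first step is to take a regular conditional probability $\omega\mapsto\P(\,\cdot\mid\Fc^0_\tau)(\omega)$, which exists since $\mathrm{\Omega}$ is Polish and $\Fc^0_\tau$ is countably generated. For each $\omega$, push this forward under the time-shift $\Theta_\tau\colon\mathrm{\Omega}\to\mathrm{\Omega}$ defined componentwise by $B_s\mapsto B_{\tau+s}-B_\tau$, $\xi_s\mapsto\xi_{\tau+s}$ and $m(\ds,\du)\mapsto m(\ds+\tau,\du)$ to obtain a kernel $\omega\mapsto\bar\P^\omega$. One then checks that $\bar\P^\omega\in\Pk_{\xi_\tau(\omega)}$ for $\P$-a.e.\ $\omega$: property \ref{it3i} is automatic since $\Theta_\tau$ preserves $\Mb_0$ and sends $\xi_\tau(\omega)$ to the new initial measure; property \ref{it3ii} follows by Fubini; and property \ref{it3iii} is the classical fact that if $M$ is a $(\Fb^0,\P)$-local martingale, then $M_{\tau+\cdot}-M_\tau$ is a $(\Fb^0,\P(\,\cdot\mid\Fc^0_\tau))$-local martingale for $\P$-a.e.\ $\omega$. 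To obtain an admissible kernel indexed by $\mu\in\Pc_p$ rather than $\omega\in\mathrm{\Omega}$, we disintegrate the $\xi_\tau$-pushforward of $\P$: by the Jankov--von Neumann selection theorem applied to the Borel graph from Lemma~\ref{lem:analytic}, together with standard universal measurability results, we can select a universally measurable choice $\mu\mapsto\P_\mu\in\Pk_\mu$ agreeing with $\bar\P^\omega$ for $(\P\circ\xi_\tau^{-1})$-a.e.\ $\mu$, and extend it arbitrarily off the image. The identity $\P=\P\otimes_\tau\P_\cdot$ then reduces to the defining property of conditional expectation.

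For part (ii), we verify directly that $\P':=\P\otimes_\tau\P_\cdot$ lies in $\Pk_{\bar\mu}$. Property \ref{it3i} is immediate from the definition of the concatenation, using that $\Mb_0$ is closed under splicing at $\tau$ (as the Radon--Nikodym derivative picks up the correct shift). Property \ref{it3ii} follows from Fubini, since the left-hand side of \eqref{eq:DPP_worthy} splits into the corresponding integral on $[0,\tau]$, which is finite $\P$-a.s., plus a shifted copy of the same integral, which is finite $\P_{\xi_\tau(\omega)}$-a.s. The core point is property \ref{it3iii}: fix $\varphi\in C_b({\R^d})$, $f\in C^\infty_0(\R\times\R)$, and denote by $M^{\varphi,f}$ the process in \eqref{eq:mp_simplified}; set $H_n=\inf\{s\ge0:|M^{\varphi,f}_s|\ge n\}$. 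For $s\le t$ and $A\in\Fc^0_s$, split $\E^{\P'}\!\left[(M^{\varphi,f}_{t\wedge H_n}-M^{\varphi,f}_{s\wedge H_n})\bm1_A\right]$ according to whether $s\ge\tau$ (where the shifted process is a $\P_{\xi_\tau}$-local martingale, so the inner expectation vanishes after applying the definition of $\otimes_\tau$) or $s<\tau\le t$ (where one uses the $\P$-local martingale property on $[s,\tau]$ and then the $\P_{\xi_\tau}$-local martingale property on $[\tau,t]$) or $t<\tau$ (directly using the $\P$-local martingale property). A routine monotone class argument extends this to all $A\in\Fc^0_s$ from a generating algebra.

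The main technical obstacle is precisely this gluing in \ref{it3iii}: one must patch two local-martingale pieces across $\tau$ while controlling the localising sequence $H_n$ consistently under both $\P$ and the kernel $\P_\cdot$, and one must ensure that the various conditional expectations are jointly measurable in $\omega$ (this is where Lemma~\ref{lem:analytic} and the countable separating families for $C_b({\R^d})$ and $C^\infty_0(\R\times\R)$ used in its proof are essential). Once this is handled, the remaining arguments are standard and parallel those of \cite[Lemma~3.3]{karoui2013_2} and \cite[Proposition~2.5]{zitkovic2014}, so we omit the details.
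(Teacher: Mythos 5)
The paper gives no proof of this lemma: it states that the argument is ``similar to that of \cite[Lemma~3.3]{karoui2013_2} or \cite[Proposition~2.5]{zitkovic2014}'' and omits the details. Your proposal is a faithful reconstruction of exactly that standard argument (regular conditional probabilities and shift operators for the disintegration; splicing of the localised martingale problem across $\tau$ plus a monotone class argument for the concatenation; Lemma~\ref{lem:analytic} and measurable selection for admissibility of the kernel), so in approach you are aligned with what the authors intend.

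There is, however, one step in your part (i) that does not work as written, and it sits precisely at the delicate point of the lemma's formulation. The regular conditional probability $\bar\P^\omega$ is $\Fc^0_\tau$-measurable in $\omega$, whereas an admissible kernel is indexed by $\mu\in\Pcal_p$ and enters the concatenation only through $\Pp_{\xi_\tau(\omega)}$. Your phrase ``select a universally measurable choice $\mu\mapsto\P_\mu$ agreeing with $\bar\P^\omega$ for $(\P\circ\xi_\tau^{-1})$-a.e.\ $\mu$'' is not well defined, because $\bar\P^\omega$ need not be a function of $\xi_\tau(\omega)$ alone: the post-$\tau$ behaviour of a generic element of $\Pk_{\bar\mu}$ (whose control $\rho$ may be path-dependent) can depend on the pre-$\tau$ path beyond $\xi_\tau$. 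Replacing $\bar\P^\omega$ by its average over $\{\xi_\tau=\mu\}$ does produce a $\mu$-indexed kernel with values in $\Pk_\mu$ (each $\Pk_\mu$ is convex, since the defining constraints on the stopped processes $M^{\varphi,f}_{\cdot\wedge H_{\pm n}}$ are linear in the measure), but then the exact identity $\P=\P\otimes_\tau\P_\cdot$ fails in general, as the concatenation forces conditional independence of past and future given $\xi_\tau$. The cited references resolve this by working with $\omega$-indexed (i.e.\ $\Fc^0_\tau$-measurable) families of kernels in the disintegration step; for the dynamic programming principle one only needs that $\bar\P^\omega\in\Pk_{\xi_\tau(\omega)}$ for $\P$-a.e.\ $\omega$ together with the tower property, and that is the form in which part (i) should be proved and used. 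Your part (ii), which is where the $\mu$-indexed kernels are genuinely needed (to concatenate $\varepsilon$-optimal selections), is correct as sketched.
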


	By use of Lemmas \ref{lem:analytic} and \ref{lem:concatenation} the following result can now be easily derived; we refer e.g.~to the proof of \cite[Theorem~2.1]{karoui2013_2} or \cite[Theorem~2.4]{zitkovic2014} for an outline of the argument.

\begin{theorem}\label{thm:dpp}
  For any $\Fb^0$ stopping time $\tau$, it holds that
  	\begin{equation*}
    	v(\mu)=\inf_{\Pp\in\Pk_\mu}\E^\Pp\left[\int_0^\tau e^{-\beta t}c(\xi_t,\rho_t)\dt+e^{-\beta \tau} v(\xi_\tau)\right],
   		\quad \mu\in\Pc_p.
  	\end{equation*}
\end{theorem}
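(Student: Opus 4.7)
The plan is to derive the DPP by the standard measurable selection / concatenation scheme on the canonical path space, leveraging the three structural results already prepared: Lemma~\ref{lem:dpp_equiv_problems} (reformulation as an infimum over $\Pk_\mu$), Lemma~\ref{lem:analytic} (Borel measurability of the graph $\{(\nu,\Pp):\Pp\in\Pk_\nu\}$) and Lemma~\ref{lem:concatenation} (stability of $\Pk_\nu$ under disintegration and concatenation at $\Fb^0$-stopping times). Writing $J(\Pp)=\E^\Pp[\int_0^\infty e^{-\beta t}c(\xi_t,\rho_t)\dt]$ and $J^\tau(\Pp)=\E^\Pp[\int_0^\tau e^{-\beta t}c(\xi_t,\rho_t)\dt + e^{-\beta\tau}v(\xi_\tau)]$, the two inequalities $v(\mu)\le\inf_\Pp J^\tau(\Pp)$ and $v(\mu)\ge\inf_\Pp J^\tau(\Pp)$ will be proved separately.

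For the inequality $v(\mu)\le \inf_{\Pp\in\Pk_\mu} J^\tau(\Pp)$, fix $\varepsilon>0$. Since the graph of $\Pk_\cdot$ is Borel and $(\nu,\Pp)\mapsto J(\Pp)$ is universally measurable (by the integrability hypothesis \eqref{eqn9} together with measurability of the cost $c$), the Jankov--von Neumann selection theorem yields a universally measurable $\varepsilon$-optimal kernel $\nu\mapsto\Pp_\nu^\varepsilon\in\Pk_\nu$ with $J(\Pp_\nu^\varepsilon)\le v(\nu)+\varepsilon$. For any $\Pp\in\Pk_\mu$, Lemma~\ref{lem:concatenation}\,(ii) gives $\Pp\otimes_\tau\Pp_\cdot^\varepsilon\in\Pk_\mu$. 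Splitting the cost at $\tau$ (using the explicit form of the concatenation together with the Markov-type structure of the integrand) and then integrating out the second coordinate yields
\[
v(\mu)\le J(\Pp\otimes_\tau\Pp_\cdot^\varepsilon)=\E^\Pp\!\left[\int_0^\tau e^{-\beta t}c(\xi_t,\rho_t)\dt + e^{-\beta\tau} J(\Pp_{\xi_\tau}^\varepsilon)\right]\le J^\tau(\Pp)+\varepsilon.
\]
Taking the infimum over $\Pp\in\Pk_\mu$ and letting $\varepsilon\downarrow 0$ gives the $\le$ inequality.

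For the reverse inequality, pick any $\Pp\in\Pk_\mu$. By Lemma~\ref{lem:concatenation}\,(i) there is an admissible kernel $(\Pp_\nu)$ with $\Pp=\Pp\otimes_\tau\Pp_\cdot$. Since $\Pp_{\xi_\tau(\omega)}\in\Pk_{\xi_\tau(\omega)}$ for $\Pp$-a.e.\ $\omega$, the definition of the value function gives $J(\Pp_{\xi_\tau(\omega)})\ge v(\xi_\tau(\omega))$. The concatenation identity together with Fubini produces
\[
J(\Pp)=\E^\Pp\!\left[\int_0^\tau e^{-\beta t}c(\xi_t,\rho_t)\dt + e^{-\beta\tau} J(\Pp_{\xi_\tau})\right]\ge J^\tau(\Pp),
\]
and taking the infimum over $\Pp\in\Pk_\mu$ yields the $\ge$ direction.

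The main obstacle is the measurable selection step: one needs $v$ to be universally (at least lower semianalytic) measurable so that $\{(\nu,\Pp):J(\Pp)\le v(\nu)+\varepsilon\}$ has an analytic/universally measurable projection on $\Pc_p$, allowing the Jankov--von Neumann selector to be applied. This follows from Lemma~\ref{lem:analytic} together with the measurability of $J$, but care is required when $c$ may take the value $+\infty$ (handled through \eqref{eqn9} and a truncation of the negative part) and when verifying that the concatenation is $\Bc(\Omega)$-measurable in the kernel variable; a careful choice of the augmentation of $\Fb^0$ and an approximation of the stopping time $\tau$ by discrete stopping times $\tau_n$ (followed by passing to the limit via dominated convergence on $\int_0^\infty e^{-\beta t}|c(\xi_t,\rho_t)|_-\dt$) is the cleanest route.
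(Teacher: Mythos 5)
Your proposal is correct and follows exactly the route the paper intends: the paper itself does not spell out the argument but derives the theorem from Lemma~\ref{lem:analytic} (Borel graph) and Lemma~\ref{lem:concatenation} (disintegration/concatenation), deferring the measurable-selection and splitting details to the standard references of El Karoui--Tan and \v{Z}itkovi\'c, which is precisely the scheme you reproduce. Your additional remarks on lower semianalyticity of $v$, the Jankov--von Neumann selector, and handling unbounded $\tau$ and the sign of $c$ are the right technical caveats and are consistent with how those references carry out the argument.
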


	We conclude by noticing that Theorem \ref{thm_dpp_main_text} is an immediate consequence of the above result and (the proof of) Lemma \ref{lem:dpp_equiv_problems}.

	\section{Properties of the derivatives}\label{secC}
	In the following lemma we provide some basic properties of the derivative. The continuity result is classical and a proof in similar contexts can be found in the literature (see for instance the discussion at page 416 in \cite{car_del_18_I}). 
	\begin{lemma}
Fix $p \in [1,\infty) \cup \{0\}$ and a map $f\in C^1(\Pcal_p)$. Then $f$ is a continuous map and its derivative is uniquely determined up to a continuous additive term of the form $\mu\mapsto a(\mu)$. If $f\in C^2(\Pcal_p)$ then its second derivative is  uniquely determined up to a continuous additive term of the form $(x,y,\mu)\mapsto a(x,\mu)+b(y,\mu)$. 
	\end{lemma}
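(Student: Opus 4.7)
Given $\mu_n \to \mu$ in $\Pcal_p$, the plan is to apply \eqref{eq_C1r_FTC} to write
\begin{equation*}
f(\mu_n) - f(\mu) = \int_0^1 \int_{\R^d} \frac{\partial f}{\partial\mu}(x, \nu_n^t)(\mu_n - \mu)(\dx)\dt, \qquad \nu_n^t := t\mu_n + (1-t)\mu.
\end{equation*}
The set $\{\nu_n^t : n\in\N,\, t\in[0,1]\}\cup\{\mu\}$ is compact in $\Pcal_p$ as the continuous image of a compact set, so the locally uniform $p$-growth bound \eqref{eq_C1r_p_growth} supplies a constant $c$ with $|\partial f/\partial\mu(x,\nu_n^t)|\le c(1+|x|^p)$. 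I would then pass to the limit by a Skorokhod-representation / Vitali argument: since $\Pcal_p$-convergence is equivalent to weak convergence together with convergence of $p$-th moments, the family $\{|x|^p\}$ is uniformly integrable under $\mu_n$, which is enough to send $n\to\infty$ inside the inner integral for each $t$. A further application of dominated convergence in $t$ closes the argument.

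\textbf{Uniqueness of the first derivative.} Setting $h = g_1 - g_2$ for two versions $g_1,g_2$, subtracting the two instances of \eqref{eq_C1r_FTC} yields
\begin{equation*}
\int_0^1 \int_{\R^d} h(x, t\nu + (1-t)\mu)(\nu-\mu)(\dx)\dt = 0 \qquad \forall\,\mu,\nu\in\Pcal_p.
\end{equation*}
The crucial step is the substitution $\nu \mapsto \nu^\epsilon := (1-\epsilon)\mu + \epsilon\nu \in \Pcal_p$, which inserts a factor $\epsilon$; dividing and letting $\epsilon\to 0^+$ (dominated convergence being justified as in the continuity step, using the compactness of $\{\mu + t\epsilon(\nu-\mu) : t,\epsilon \in [0,1]\}$ and the fact that $\int(1+|x|^p)\,\d|\nu-\mu| < \infty$) gives
\begin{equation*}
\int_{\R^d} h(x,\mu)(\nu-\mu)(\dx) = 0 \qquad \forall\,\nu\in\Pcal_p.
\end{equation*}
Specialising $\nu = \delta_y$ (which belongs to $\Pcal_p$ for every admissible $p$) yields $h(y,\mu) = \int h(x,\mu)\mu(\dx) =: a(\mu)$, independent of $y$; continuity of $a$ is immediate from that of $h$.

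\textbf{Uniqueness of the second derivative.} With $K = H_1 - H_2$, the same $\nu \mapsto \nu^\epsilon$ substitution applied to \eqref{eq_C2r_FTC} now produces a factor $\epsilon^2$ (since $(\nu^\epsilon - \mu)^{\otimes 2} = \epsilon^2(\nu-\mu)^{\otimes 2}$); dividing and letting $\epsilon\to 0^+$ yields
\begin{equation*}
\int_{{\R^d}\times {\R^d}} K(x,y,\mu)(\nu-\mu)^{\otimes 2}(\dx,\dy) = 0 \qquad \forall\,\nu\in\Pcal_p.
\end{equation*}
I would then polarise by substituting $\nu = \tfrac12(\nu_1+\nu_2)$ and invoking the imposed symmetry $K(x,y,\mu)=K(y,x,\mu)$ to obtain $\int K(x,y,\mu)(\nu_1-\mu)(\dx)(\nu_2-\mu)(\dy) = 0$ for all $\nu_1,\nu_2\in\Pcal_p$. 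Testing against Dirac masses $\nu_i = \delta_{x_i}$ gives the explicit identity
\begin{equation*}
K(x_1,x_2,\mu) = c(x_1,\mu) + c(x_2,\mu) - d(\mu),
\end{equation*}
where $c(x,\mu) = \int K(x,y,\mu)\mu(\dy)$ and $d(\mu) = \int K(x,y,\mu)\mu^{\otimes 2}(\dx,\dy)$. Both $c$ and $d$ inherit continuity from that of $K$ via the same $\Pcal_p$-convergence reasoning used above, so $K$ can be displayed in the stated form by taking, for instance, $a(x,\mu)=b(x,\mu) = c(x,\mu) - \tfrac12 d(\mu)$.

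\textbf{Main obstacle.} The algebraic skeleton of all three arguments is elementary once arbitrary test perturbations in $\Pcal_p$ are admissible. The real work is in justifying the passages to the limit — both $\mu_n \to \mu$ in the continuity step and $\epsilon \to 0^+$ in the uniqueness steps. These rest on the interplay between the locally uniform $p$-growth of the (second) derivative and the uniform integrability of $|x|^p$ built into $\Pcal_p$-convergence, and this is where care is genuinely required.
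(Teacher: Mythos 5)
Your proposal is correct and follows essentially the same route as the paper: continuity via the fundamental theorem of calculus combined with compactness of the interpolating segment and the locally uniform $p$-growth bound (your uniform-integrability/Vitali phrasing is the same mechanism as the paper's compactly supported cutoff), and uniqueness via perturbations $\nu^\epsilon=(1-\epsilon)\mu+\epsilon\nu$ tested against Dirac masses, with the same resulting formulas $h(y,\mu)=\int h(x,\mu)\mu(\dx)$ and $K(x_1,x_2,\mu)=c(x_1,\mu)+c(x_2,\mu)-d(\mu)$. The only cosmetic difference is that you derive the bilinear identity for the second derivative by polarisation, whereas the paper plugs in $\nu=(1-2\epsilon)\mu+\epsilon(\delta_{\bar x}+\delta_{\bar y})$ directly.
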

	\begin{proof}
	  To prove continuity of $f$ along a sequence $(\mu_n)_n$ converging to $ \mu$ by \eqref{eq_C1r_FTC} it suffices to show that 
  \begin{align*}
  \bigg| \int_{\R^d} \frac{\partial f}{\partial\mu}(x,t\mu_n + (1-t)\mu)-\frac{\partial f}{\partial\mu}(x,\mu)(\mu_n-\mu)(\dx)\bigg|
  + \bigg| \int_{\R^d}\frac{\partial f}{\partial\mu}(x,\mu)(\mu_n-\mu)(\dx)\bigg|
  \end{align*}
  vanishes for $n$ going to infinity.
  The second term converges to zero due to continuity of the derivative and \eqref{eq_C1r_p_growth}. To prove convergence of the first term it suffices to show that
  $$\lim_{n\to\infty}\sup_{\nu\in K}\int_{\R^d}\bigg| \frac{\partial f}{\partial\mu}(x,t\mu_n + (1-t)\mu)-\frac{\partial f}{\partial\mu}(x,\mu)\bigg|\nu(\dx)=0$$
  for $K:=\{\mu_n\colon n\in \N\}\cup\{\mu\}$. Fix $\e>0$. Since $K$ is compact the map $\nu\mapsto\nu(1+|\cdot|^p)$ is bounded on $K$ and we can find a map   $\varphi\in C_c(\R^d)$ such that $0\leq\varphi(x)\leq 1$ and 
  $$\sup_{\nu\in K}\bigg|\int(1+|x|^p)(1-\varphi(x))\nu(\dx)\bigg|<\e.$$
  Since continuous maps are uniformly continuous on compacts we can conclude that for $n$ large enough
\begin{align*}
    &\sup_{\nu\in K}\int \bigg| \frac{\partial f}{\partial\mu}(x,t\mu_n + (1-t)\mu)-\frac{\partial f}{\partial\mu}(x,\mu)\bigg| \nu(\dx)\\
  &\qquad\leq \sup_{\nu\in K}\int \bigg| \frac{\partial f}{\partial\mu}(x,t\mu_n + (1-t)\mu)-\frac{\partial f}{\partial\mu}(x,\mu)\bigg| \varphi(x)\nu(\dx)+\e
  \leq 2\e,
\end{align*}
  proving the first claim.

	Uniqueness of the derivative can be shown by proving that every version of $\frac{\partial f}{\partial\mu}$ for $f=0$ does not depend on $x$. 
   Fix $\mu\in \Pcal_p$, $\overline x \in \R^d$, and note that condition \eqref{eq_C1r_FTC} for $f=0$ and  $\nu=(1-\e)\mu+\e\delta_{\overline x}$ yields 
$$
0 = \int_0^1 \bigg(\frac{\partial f}{\partial\mu}(\overline x,\mu+t\e(\delta_{\overline x}-\mu))-\int_{\R^d} \frac{\partial f}{\partial\mu}(x,\mu+t\e(\delta_{\overline x}-\mu))\mu(\dx)\bigg)\dt,
$$
  for each $\e>0$. Since $K:=\{\mu+t(\delta_{\overline x}-\mu)\colon t\in [0,1]\}$ is a compact set, by the continuity of $\frac{\partial f}{\partial\mu}$ and \eqref{eq_C1r_p_growth} we can apply the dominated convergence theorem to conclude that $\frac{\partial f}{\partial\mu}(\overline x,\mu)=\int_{\R^d} \frac{\partial f}{\partial\mu}(x,\mu)\mu(\dx)$.

To prove uniqueness of the second derivative  set again $f=0$, $\mu\in \Pcal_p$, and $\nu=(1-2\e)\mu+\e(\delta_{\overline x}+\delta_{\overline y})$.
Proceeding as for the first order derivative  conditions \eqref{eq_C2r_FTC} and  \eqref{eq_C2r_p_growth} and the imposed symmetry yield
$$
\begin{aligned}
0&= \frac{\partial^2 f}{\partial\mu^2}(\overline x,\overline y,\mu)
-\Big(\int_{\R^d}\frac{\partial^2 f}{\partial\mu^2}(\overline x,y,\mu) \mu(\dy)
+\int_{\R^d}\frac{\partial^2 f}{\partial\mu^2}(x,\overline y,\mu) \mu(\dx)\Big)\\
&\qquad+\int_{{\R^d}\times {\R^d}} \frac{\partial^2 f}{\partial\mu^2}(x,y,\mu)\mu^{\otimes 2}(\dx,\dy),
\end{aligned}
$$
  proving the claim.
\end{proof}
\end{appendix}

\bibliographystyle{abbrvnat}
\bibliography{refs.bib}

\end{document}